\documentclass[12pt]{article}
\usepackage{amsmath,enumerate,amsfonts,amssymb,color,graphicx,amsthm}
\usepackage{hyperref}
\usepackage{todonotes}
\usepackage[normalem]{ulem}
\usepackage{dsfont}

\setlength{\oddsidemargin}{0.25in}
\setlength{\textwidth}{6in}
\setlength{\topmargin}{-0.25in}
\setlength{\textheight}{8in}

\def\RR{{\mathbb R}}

\newtheorem{theorem}{Theorem}[section]
\newtheorem{proposition}[theorem]{Proposition}

\newtheorem{lemma}[theorem]{Lemma}

\DeclareFontFamily{OT1}{rsfs}{}
\DeclareFontShape{OT1}{rsfs}{m}{n}{ <-7> rsfs5 <7-10> rsfs7 <10-> rsfs10}{}
\DeclareMathAlphabet{\mycal}{OT1}{rsfs}{m}{n}

\newcounter{marnote}

\def\bhv{{\mathbf{\hat v}}}
\newtheorem{thm}{Theorem}[section]

\newtheorem{lem}[thm]{Lemma}
\newtheorem{prop}[thm]{Proposition}
\theoremstyle{definition}
\newtheorem{rem}[thm]{Remark}

\numberwithin{equation}{section}

\newcommand{\bremark}{\begin{rem} \textup}
\newcommand{\eremark}{\end{rem} }

\newcommand{\cuad}{{\sqcap\kern-.68em\sqcup}}

\newcommand{\la}{\lambda}

\newcommand{\R}{{\mathbb{R}}}

\renewcommand{\rho}{\varrho}
\renewcommand{\theta}{\vartheta}
\newcommand{\pa}{\partial}
\newcommand{\eps}{\varepsilon}
\newcommand{\s}{\sigma}
\renewcommand{\S}{\mathbb{S}}

\def\RHSA{{P}}
\def\RHSB{{Q}}

\def\vone{{u}}
\def\vtwo{{v}}

\begin{document}

\title{Moving planes for domain walls in a coupled system}
\author{Amandine Aftalion\thanks{Ecole des Hautes Etudes en Sciences Sociales, CNRS UMR 8557, Centre d'Analyse et de Math\'ematique Sociales, 54 Boulevard Raspail, 75006 Paris, France. Email: amandine.aftalion@ehess.fr}~, Alberto Farina\thanks{LAMFA, CNRS UMR 7352, Universit\'ee de Picardie Jules Verne, 33 rue Saint-Leu, 80039 Amiens, France. E-mail: alberto.farina@u-picardie.fr}~, Luc Nguyen\thanks{Mathematical Institute and St Edmund Hall, University of Oxford, Andrew Wiles Building, Radcliffe Observatory Quarter, Woodstock Road, Oxford OX2 6GG, United Kingdom. Email: luc.nguyen@maths.ox.ac.uk}}

\maketitle

\begin{abstract}
The system leading to phase segregation in two-component Bose-Einstein condensates can be generalized to hyperfine spin states with a Rabi term coupling. This leads to domain wall solutions having a monotone structure for a non-cooperative system.
We use the moving plane method  to prove monotonicity and one-dimensionality of the phase transition solutions. This relies on totally new estimates  for a type of system for which no Maximum Principle a priori holds. We also derive that one dimensional solutions are unique up to translations. When the Rabi coefficient is large, we prove that no non-constant solutions can exist.
\end{abstract}

\section{Introduction}
We study the monotonicity and uniqueness of a non-cooperative system coming from the physics of two-component Bose-Einstein condensates which displays partial phase transition. We are able to use the moving plane device in a case where a priori bounds do not come from standard techniques. The particularity of this system with respect to classical two-component segregated Bose-Einstein condensates is to couple both linearly and nonlinearly the two equations due to spin coupling of the hyperfine states added to the intercomponent coupling. Our system for the functions $u$ and $v$ is the following:
\begin{equation}\label{mainsystem}
\begin{cases}
\Delta u \, =- u(1-u^2-v^2) -v (\omega-\alpha uv) =: \RHSA(u,v)& \text{in}\quad\mathbb{R}^N,  \\
\Delta v \, = -v(1-u^2-v^2) -u (\omega-\alpha uv) =: \RHSB(u,v)& \text{in}\quad\mathbb{R}^N,
\end{cases}
\end{equation}
where $u$ and $v$ are the wave functions for each component, $\alpha$ is a positive parameter such that $1+\alpha$ is the intercomponent coupling, and $\omega$ is a positive parameter, which is the Rabi frequency of the one body coupling between the two internal states. We would like to study the phase transition solutions, so we will impose  boundary conditions at infinity in one direction, namely:
\begin{equation}\label{mainBC}
\begin{cases}
u(x',x_N)\to b,\quad  v(x',x_N)\to a, \quad& \text{as}\quad x_N\to +\infty,  \\
u(x',x_N)\to a,\quad  v(x',x_N)\to b, \quad& \text{as}\quad x_N\to -\infty,
\end{cases}
\end{equation} the limit being uniform in $x'\in \mathbb{R}^{N-1}$, where $(a,b)$ is the solution to
\begin{equation}\label{eqab}
a^2+b^2=1,\ \quad ab=\frac{\omega}\alpha,\quad {0 < a}< b.
\end{equation} The existence of such solutions requires, in addition to the positivity of $\alpha$ and $\omega$, the condition \begin{equation}\label{cond}
\frac{\omega}\alpha<\frac 12.\end{equation}

The study of domain wall solutions in coupled Gross-Pitaevskii equations or segregation patterns has been the subject of many papers concerning existence, uniqueness, monotonicity of asymptotic behaviour  \cite{aftsour17, alama, FSS,sourdisweak}. It corresponds to the case $\omega =0$ and $\alpha>0$.

Here, we would like to address a different physical background, that of a two-component Bose-Einstein condensates representing two different hyperfine states, and coupled by their spin, to take into account a one body coherent Rabi coupling, which corresponds to $ \omega >0$. This leads to what is called Rabi oscillations which have been experimentally observed in \cite{matexp}. The ground states and excited states have been studied in \cite{am,dror,sp,qustring,usui}. The  system (\ref{mainsystem})-(\ref{mainBC}) for $N=1$ has been analyzed in \cite{aftsour} where the existence and asymptotic properties of one dimensional domain wall solutions are derived in the case $\omega/\alpha$ of order 1 and $ \alpha$ large and small. The properties and structures found in \cite{aftsour} have led us to investigate the monotonicity and uniqueness of solutions. Let us point out that this system has a heteroclinic structure and derives from the minimization of the Gross-Pitaevskii energy.

Here is our main result:
\begin{theorem}\label{maintheo}
 Assume $\alpha>0$, $\omega > 0$, and (\ref{cond}) holds. Then all solutions to (\ref{mainsystem})-(\ref{mainBC}) depend only on $x_N$, satisfy $\partial_N u>0$ and $\partial_N v<0$ and are translations of one another along the $x_N$ direction.
 \end{theorem}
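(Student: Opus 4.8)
The plan is to adapt the moving plane method of Berestycki–Caffarelli–Nirenberg / Gidas–Ni–Nirenberg to the system \eqref{mainsystem}, but the essential difficulty — and the novelty advertised in the abstract — is that the linearized operator is \emph{not} cooperative, so no maximum principle is available off the shelf. The first step is a change of variables that exploits the structure of the boundary data \eqref{mainBC}: since $u$ and $v$ swap roles as $x_N \to \pm\infty$, I would introduce the sum $\sigma = u+v$ and difference $\delta = u - v$ (or, closer to the physics, work with $u$ and $v$ directly but track the combination $u-v$). One computes that $\delta$ solves a scalar-looking equation
\[
\Delta \delta = -\delta\,(1 - u^2 - v^2) + \delta\,(\omega - \alpha u v) \cdot(-1) \quad\Longrightarrow\quad \Delta \delta = -\delta\bigl(1 - u^2 - v^2 - \omega + \alpha u v\bigr),
\]
wait — more carefully, subtracting the two equations gives $\Delta(u-v) = -(u-v)(1-u^2-v^2) + (u-v)(\omega - \alpha uv)$, i.e. $\Delta\delta = -\delta\,c(x)$ with $c(x) = (1 - u^2 - v^2) - (\omega - \alpha uv)$, a genuine \emph{scalar} equation for $\delta$. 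Similarly $\sigma = u+v$ satisfies $\Delta\sigma = -\sigma(1-u^2-v^2) - \sigma(\omega - \alpha uv) = -\sigma\,\tilde c(x)$. This decoupling at the level of the equations (though not of the coefficients) is what makes the problem tractable: each of $\delta, \sigma$ obeys a scalar linear elliptic equation to which the classical sliding/moving-plane machinery applies, provided one controls the sign of the zeroth-order coefficients near infinity.

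**The main steps.** (1) \emph{A priori bounds and sign control.} Using \eqref{eqab}, \eqref{cond}, the boundary conditions force $\delta = u - v \to b - a < 0$ as $x_N \to +\infty$ and $\delta \to a - b > 0$ as $x_N \to -\infty$; and $\sigma = u+v \to a+b > 0$ at both ends. One must first establish, via elliptic estimates and the structure of $\RHSA, \RHSB$, that $0 < a \le u, v \le b$ (or at least uniform $L^\infty$ bounds) so that the coefficients $c(x), \tilde c(x)$ are bounded; then analyze the limiting ODE system to show $c(x) < 0$ for $x_N$ large positive and $x_N$ large negative (this is where the nondegeneracy of $(a,b)$ as a solution of \eqref{eqab} enters — the relevant $2\times2$ matrix must have the right spectral sign). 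This gives the key \emph{coefficient-sign-at-infinity} input. (2) \emph{Moving planes in the $x_N$ direction.} Set $\delta_\lambda(x) = \delta(x', 2\lambda - x_N)$ and consider $w_\lambda = \delta_\lambda - \delta$. Using the uniformity of the limits in \eqref{mainBC}, start the planes from $\lambda = +\infty$ where $w_\lambda > 0$ holds near the relevant half-space, slide $\lambda$ down, and use the scalar maximum principle (in unbounded domains, with the decay/sign of $c$ at infinity supplying the needed barrier, à la BCN) to show $w_\lambda \ge 0$ persists; at the critical $\lambda$ a strong maximum principle / Hopf lemma argument gives that either $\delta$ is symmetric about that plane or strict monotonicity. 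The same run for $\sigma$. Combining monotonicity of both $\delta$ and $\sigma$ yields monotonicity of $u$ and $v$ separately, with $\partial_N u > 0$, $\partial_N v < 0$. (3) \emph{One-dimensionality.} Run the moving plane argument in \emph{every} direction $e$ orthogonal to $e_N$ (here the limits are constant in $x'$, so $\delta$ and $\sigma$ are "flat at infinity" in those directions and symmetry about every such plane forces independence of $x'$); hence $u = u(x_N)$, $v = v(x_N)$. (4) \emph{Uniqueness up to translation.} For the resulting 1D problem, monotonicity plus the sliding method (compare a solution with a translate of another) gives that any two solutions coincide after a translation — alternatively invoke the ODE uniqueness/shooting already available from the earlier analysis in \cite{aftsour} together with the established monotonicity.

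**The main obstacle** is Step (2) in the non-cooperative setting: the classical moving plane argument needs a maximum principle for $w_\lambda$, and although passing to $\delta = u-v$ and $\sigma = u+v$ formally decouples the \emph{operators}, the zeroth-order coefficients $c(x), \tilde c(x)$ depend on $u, v$ and hence on $x$ in a way that is not \emph{a priori} sign-definite in the interior — only near $x_N = \pm\infty$. The fix must be the "totally new estimates" alluded to in the abstract: one needs to combine the decay of $w_\lambda$ at infinity (from uniform boundary conditions) with a careful two-sided control — probably treating the $\delta$ and $\sigma$ equations as a $2\times2$ system whose off-diagonal coupling is controlled by the already-established pointwise bounds $0 < a \le u,v \le b$, and showing this system is cooperative (or becomes so after rescaling) precisely where it matters. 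Proving that the relevant coefficient matrix has a negative principal eigenvalue uniformly, so that a generalized maximum principle of Berestycki–Nirenberg–Varadhan type applies in the unbounded slab, is the technical heart of the argument; everything else (starting the planes, Hopf lemma, the orthogonal directions, uniqueness) is then a fairly standard, if lengthy, adaptation of the classical template.
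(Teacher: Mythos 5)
Your sum/difference reduction is appealing but it breaks down at exactly the step where the work has to happen. Writing $\sigma=u+v$, $\delta=u-v$ does give \emph{scalar-looking} equations $\Delta\delta=-\delta\,c$, $\Delta\sigma=-\sigma\,\tilde c$ for the solution itself, but the moving plane method needs an equation for the \emph{differences} $\delta_\lambda-\delta$ and $\sigma_\lambda-\sigma$, and
\[
\Delta(\delta_\lambda-\delta)=-c_\lambda(\delta_\lambda-\delta)-\delta\,(c_\lambda-c),
\]
where $c_\lambda-c$ involves $\sigma_\lambda^2-\sigma^2$ and $\delta_\lambda^2-\delta^2$. So the comparison problem is still a genuinely coupled $2\times 2$ system, and its cooperativity is precisely what is in question; the decoupling you rely on is illusory at the level where the maximum principle is applied. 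Worse, the $\sigma$-leg of your plan cannot work as stated: by \eqref{mainBC}, $\sigma\to a+b$ at \emph{both} ends, so $\sigma$ cannot be strictly monotone in $x_N$, and without monotonicity of $\sigma$ you cannot recover $\partial_N u>0$ and $\partial_N v<0$ from monotonicity of $\delta$ alone (since $2\partial_N u=\partial_N\sigma+\partial_N\delta$). You also flipped a sign: $\delta\to b-a>0$ at $+\infty$. Finally, you state that one "must first establish $a\le u,v\le b$" but give no mechanism; in the paper this is Proposition \ref{propbounds}, i.e.\ the bounds $u^2+v^2\le 1$ and $uv\ge\omega/\alpha$ in \eqref{keybound}, proved by a self-bootstrapping argument on $A=u^2+v^2$ and $B=-\ln(uv)$ via Kato's inequality, and it is explicitly flagged as the non-trivial new input — not a routine elliptic estimate.

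The paper's route is different in the two places you leave open. It never passes to $\sigma,\delta$ for the moving planes: it compares $(u_\lambda,v_\lambda)$ with $(u,v)$ directly, aiming at $u_\lambda\le u$ and $v_\lambda\ge v$ in $\Sigma_\lambda$ (note the \emph{opposite} orderings, which is what turns the non-cooperative coupling into one with a favourable sign once \eqref{keybound}--\eqref{boundab} give $\alpha uv-\omega\ge0$ and $a<u,v<b$). And instead of a Berestycki--Nirenberg--Varadhan principal-eigenvalue maximum principle in a slab, it uses an integral method: test the difference equations with $(u_\lambda-u)^+\varphi_R^2$ and $(v-v_\lambda)^+\varphi_R^2$ and close the estimate with the doubling Lemma \ref{lemLR}, the decisive point being the positivity of the quadratic form
\[
(2b^2+\alpha a^2)(2a^2+\alpha b^2)-(2(\alpha+1)ab-\omega)^2=2\alpha\Bigl(1-\tfrac{4\omega^2}{\alpha^2}\Bigr)>0,
\]
which is where hypothesis \eqref{cond} enters. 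One-dimensionality is then obtained by propagating monotonicity over the open upper hemisphere of directions (a connectedness argument), and uniqueness up to translation by sliding, using the exponential decay rates at $\pm\infty$. Your outline correctly identifies the need for a priori bounds and for sign control at infinity, but the two central claims of your Step (2) — that the problem reduces to scalar maximum principles for $\delta$ and $\sigma$, and that monotonicity of those two functions yields the theorem — are both incorrect, so the proposal as written does not constitute a proof.
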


We note that the existence of solutions to (\ref{mainsystem})-(\ref{mainBC}) can be obtained by approximation by solutions on finite intervals as proved in Proposition \ref{Thm:EUMODE}.

The proof of our main Theorem relies on three main ingredients:
\begin{itemize}\item bounds which provide a special structure to the problem,
\item once these bounds are known, on the moving plane device, in a version which is inspired by \cite{FSS}. It is not a mere adaptation of previous works as it in fact requires new estimates.
    \item the uniqueness proof for one dimensional solutions, which is based once again on these bounds and on the sliding method \cite{BCN97}.
\end{itemize}
The key estimates hold for solutions of the system without any conditions at infinity. They are the following:
\begin{proposition}\label{propbounds}
Assume $\alpha>0$, $\omega > 0$, and (\ref{cond}) holds. Let $(u,v)$ be a positive regular solution of (\ref{mainsystem}), then \begin{equation}\label{keybound}u^2+v^2\leq 1\hbox{ and }uv \geq \frac\omega\alpha.
\end{equation} Moreover, either $(u,v) \equiv (a,b)$ or $(u,v) \equiv (b,a)$ or
 \begin{equation}
 a<u,v<b.\label{boundab}
 \end{equation}
\end{proposition}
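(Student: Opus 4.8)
The plan is to work with the sum and difference functions, or rather with the two natural quantities $s := u^2 + v^2$ and $p := uv$, and to show that the solution must avoid the ``bad'' region $\{s > 1\} \cup \{p < \omega/\alpha\}$ by a maximum-principle argument on a cleverly chosen scalar combination. First I would compute $\Delta s$ and $\Delta p$ from \eqref{mainsystem}. Using $\Delta(u^2) = 2u\Delta u + 2|\nabla u|^2$ and similarly for $v^2$, one gets
\begin{equation*}
\tfrac12 \Delta s = |\nabla u|^2 + |\nabla v|^2 - (u^2+v^2)(1-u^2-v^2) - 2uv(\omega - \alpha uv),
\end{equation*}
so that, dropping the nonnegative gradient terms,
\begin{equation*}
\tfrac12 \Delta s \geq -(1 - s)\,s - 2p(\omega - \alpha p) = s(s-1) + 2p(\alpha p - \omega).
\end{equation*}
Similarly $\Delta p = u\Delta v + v\Delta u + 2\nabla u\cdot\nabla v$, and
\begin{equation*}
\Delta p = -2uv(1-u^2-v^2) - (u^2+v^2)(\omega-\alpha uv) + 2\nabla u\cdot\nabla v = -2p(1-s) - s(\omega - \alpha p) + 2\nabla u \cdot \nabla v.
\end{equation*}
Here the cross term $2\nabla u\cdot\nabla v$ has no sign, which is the first technical nuisance; I would look for a linear combination $w := \lambda(s-1) + \mu(\omega/\alpha - p)$ (with $\lambda,\mu>0$ to be chosen) for which the gradient terms combine into something of the form $|\nabla u \pm \nabla v|^2 \geq 0$, i.e.\ choosing the coefficients so that $2(\lambda) |\nabla u|^2 + 2\lambda|\nabla v|^2 + 2\mu\,\nabla u\cdot\nabla v$ (or its negative) is a perfect square — this forces a relation like $\mu = \pm 2\lambda$.

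With such a $w$ I expect an inequality of the shape $\Delta w \geq c(x)\, w$ wherever $w \geq 0$ (equivalently on the bad set), where $c$ is bounded on bounded sets; the algebra is that $s(s-1) = \tfrac{\lambda}{\lambda}(s-1)\cdot s$ and $2p(\alpha p - \omega) = 2\alpha p (p - \omega/\alpha)$ already carry the right sign, and the linear-combination choice is made precisely so the remaining lower-order terms also have the favourable sign on $\{w \geq 0\}$. The key structural point, which I would verify by the $2\times2$ algebra on $\begin{pmatrix} \text{coefficients}\end{pmatrix}$, is that the ``self-interaction'' part of the linearized operator at a point where $s \geq 1$ and $p \leq \omega/\alpha$ is cooperative in the variables $(s-1, \omega/\alpha - p)$ even though the original system \eqref{mainsystem} is not cooperative in $(u,v)$ — this is exactly the ``special structure'' alluded to after Theorem~\ref{maintheo}. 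Once $\Delta w \geq c(x) w$ holds on the open set $U := \{w > 0\}$, I would argue $U = \emptyset$: if not, since $w$ is a regular (hence locally bounded) solution, one applies the strong maximum principle / a Hopf-type or a sweeping argument. The cleanest route, given there is \emph{no} global bound assumed, is a contradiction via scaling or via a comparison with a radial supersolution: on a large ball $B_R$, $w$ satisfies $\Delta w \geq -C w$ (with $C$ from the bound on $c$ on that ball), and using that $w$ is bounded on $B_R$ together with a barrier shows $w$ cannot attain a positive value — but this needs $w$ to be controlled near $\partial B_R$, which is the delicate point since solutions are only assumed regular, not bounded.

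To handle the absence of a priori bounds I would instead run the argument locally and propagate: suppose $w(x_0) > 0$. Because $w$ is continuous, $w > 0$ on a neighbourhood; on the connected component $U_0$ of $\{w>0\}$ containing $x_0$ we have $\Delta w \geq c(x) w \geq 0$ once we also know $w$ stays in a regime where $c \geq 0$ — alternatively, after possibly enlarging the combination, $\Delta w \geq 0$ on all of $U_0$, so $w$ is subharmonic there and cannot have an interior maximum, forcing $U_0$ to be unbounded and $w$ to be, say, monotone-ish along rays; then a Liouville-type statement for subharmonic functions with the growth control coming from the equation \eqref{mainsystem} itself (polynomial nonlinearity ⇒ the solution and its derivatives obey interior elliptic estimates in terms of their sup on twice-the-ball) yields a contradiction. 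This gives $s \leq 1$ and $p \geq \omega/\alpha$ everywhere, i.e.\ \eqref{keybound}. For the trichotomy: \eqref{keybound} forces $(u^2+v^2) \geq 2uv \geq 2\omega/\alpha$, and combined with $u^2+v^2 \leq 1$ one sees the pair $(u^2+v^2, uv)$ ranges over a compact region whose boundary corresponds precisely to $\{u = v\}$ or to $(a,b),(b,a)$; at an interior point of the bad-set closure where equality $s = 1$ or $p = \omega/\alpha$ holds, the strong maximum principle applied to $w$ (now $w \leq 0$ with $\Delta w \geq c(x) w$ on $\{w<0\}$, or to $1-s$ and $p - \omega/\alpha$ separately) forces $w \equiv 0$, i.e.\ $s \equiv 1$ and $p \equiv \omega/\alpha$, which pins $(u,v)$ to one of the two constant solutions; otherwise the strict inequalities $s < 1$, $p > \omega/\alpha$ hold everywhere, and a short computation shows $s<1$ together with $uv > \omega/\alpha > 0$ and $u,v>0$ gives $a < u, v < b$ after using $a,b$ are the two roots of $t \mapsto t^2 - t\sqrt{\cdots}$... more precisely, $u,v$ are the two roots of $X^2 - (u+v)X + uv = 0$ and the constraints $u^2+v^2 < 1$, $uv > \omega/\alpha$ confine both roots strictly between $a$ and $b$.

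\textbf{Main obstacle.} The crux is the second paragraph: identifying the right linear (or possibly nonlinear) combination $w$ of $s-1$ and $\omega/\alpha - p$ that (i) absorbs the indefinite cross term $\nabla u \cdot \nabla v$ into a perfect square and simultaneously (ii) makes the zeroth-order part cooperative/subharmonic on the bad set, and then running the maximum principle \emph{without any growth assumption at infinity} — the latter forcing a Liouville-type ingredient (interior elliptic estimates + subharmonicity) rather than a plain comparison on a ball. Everything else is bookkeeping with the polynomial nonlinearities.
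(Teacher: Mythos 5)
Your starting point is the paper's: the differential inequality $\tfrac12\Delta(u^2+v^2)\geq (u^2+v^2)(u^2+v^2-1)+2uv(\alpha uv-\omega)$ is exactly the first estimate in the actual proof. But the core of your plan --- a single linear combination $w=\lambda(s-1)+\mu(\omega/\alpha-p)$ whose gradient terms form a perfect square and whose zeroth-order part is ``cooperative'' on $\{w>0\}$ --- does not close, for two reasons. First, the perfect-square requirement forces $\mu=\pm2\lambda$, so $w$ is (up to scale) $(u-v)^2-(1-2\omega/\alpha)$ or $(u+v)^2-(1+2\omega/\alpha)$; showing $w\leq 0$ for such a $w$ is strictly weaker than the two separate inequalities of \eqref{keybound}, and the set $\{w>0\}$ is not the set $\{s>1\}\cup\{p<\omega/\alpha\}$ you need to exclude. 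Second, the claimed favourable sign of the zeroth-order terms on $\{w\geq0\}$ is not verified and fails in general: for $w=(u-v)^2-(1-2\omega/\alpha)$ one gets $\Delta w\geq 2(u-v)^2(s-1+\omega-\alpha p)$, and the factor $s-1+\omega-\alpha p$ has no sign on $\{w>0\}$ before \eqref{keybound} is known. The paper's resolution is structurally different: it works with $A=u^2+v^2$ and $B=-\ln(uv)$ (the logarithm makes the gradient cross-terms appear with a definite favourable sign, with no perfect-square gymnastics), derives two \emph{coupled} one-sided bounds --- $\sup A$ in terms of $\inf(uv)$ and $\inf(uv)$ in terms of $\sup A$ --- and closes the loop with a quantitative contraction argument ($m\leq h_1(h_2(m))$ with $\frac{d}{dt}h_1(h_2(t))<1$ for $t>1$ and fixed point at $1$). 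This bootstrap is the heart of the proof and has no counterpart in your proposal.

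There is a second genuine gap in how you handle the absence of any bound at infinity. A nonnegative subharmonic function on $\RR^N$ need not vanish, so ``subharmonicity plus a Liouville-type statement with interior elliptic estimates'' cannot rule out $\{w>0\}\neq\emptyset$; interior estimates bound a solution by its sup on a larger ball and the argument never closes without an a priori bound. The paper instead uses Kato's inequality to convert each differential inequality into $\Delta W_+\geq c\,W_+^{q}$ with $q>1$ and then invokes the Keller--Osserman-type lemma of Brezis, which kills $W_+$ with no growth hypothesis whatsoever; making the $B$-estimate superlinear additionally requires a non-degeneracy lemma (a positive lower bound on $u+v$, proved by a sliding comparison with a truncated Allen--Cahn minimizer), which is entirely absent from your outline. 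Your treatment of the trichotomy (non-strict bounds from \eqref{keybound} via the quadratic $X^2-(u+v)X+uv$, then the strong maximum principle against the constants $a$ and $b$) is essentially correct and matches the paper's Step 5, but that is the easy part.
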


Let us point out that once the bounds (\ref{keybound}) are known, the equations lead to  (\ref{boundab}), without a strict sign. Therefore, this provides signs to the right hand side of the system (\ref{mainsystem}), namely the system gets a structure of the type
 \begin{equation}\label{newsystem}
\begin{cases}
-\Delta u \, =u f_1(u,v)+v f_2(u,v) & \text{in}\quad\mathbb{R}^N,  \\
-\Delta v \, =v f_1(v,u)+u f_2(v,u)& \text{in}\quad\mathbb{R}^N,
\end{cases}
\end{equation}
with $f_1$ positive, symmetric, decreasing {in both variables} and $f_2$ negative, symmetric, decreasing {in both variables}. Whether the conclusion of Theorem \ref{maintheo} holds for more general systems of the form \eqref{newsystem} remains for further investigation.

 In the case $\omega=0$, then the solution to (\ref{eqab}) is given by $a = 0$ and $b=1$, and therefore domain wall solutions stay between $0$ and $1$. In the one-dimensional case the existence of a monotone solution has been derived in \cite{alama} while uniqueness has been derived in \cite{aftsour17}, based on a continuation argument and estimates of the linearized operator, under the assumption that either $u{'}>0$ or $v{'} <0$.
 Asymptotic estimates in the large $\alpha$ limit, still in one dimension, have been also obtained in \cite{aftsour17},  and rely on properties for a simpler outer system studied by many authors and for instance by \cite{beres1,beres2,FaSYM, FaSo}. The other limit, that of weak segregation where $\alpha$ tends to 0 has been analyzed in \cite{sourdisweak}. The proof that $N$-dimensional solutions are in fact one-dimensional and monotone is made in \cite{FSS} and relies on the moving plane device. The combination of the results of \cite{FSS} and \cite{aftsour17} then leads to the uniqueness of the solution, up to translations.

 Let us point out that, in the case $\omega=0$, the first bound in \eqref{keybound} is a quite direct consequence of the structure of the system  \eqref{mainsystem} (see \cite{FSS}, Theorem 1.3) while the second one is obvious for positive solutions. In our case, the situation is much more involved due to the presence of the Rabi frequency $\omega >0$, and proving these bounds requires some non-trivial extra-work, to which we devote the most part of Section 2.

We also get interesting results using similar estimates in the case where (\ref{cond}) does not hold.  We prove that solutions to \eqref{mainsystem} are constant without any boundary condition at infinity.
\begin{theorem}\label{maintheo2}
 Assume $\alpha>0$, and \begin{equation}\label{cond2}
\frac{\omega}\alpha \geq \frac 12\end{equation}
 holds. Let $(u,v)$ be a positive regular solution of (\ref{mainsystem}). Then $(u,v)$ is identically equal to $(c,c)$ with $c= \sqrt{\frac{1+\omega}{2+\alpha}}$.
 \end{theorem}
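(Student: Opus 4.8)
The plan is to exploit the same algebraic structure that drives Proposition \ref{propbounds}, but now observe that when $\omega/\alpha \geq 1/2$ the two constraints $u^2+v^2 \le 1$ and $uv \ge \omega/\alpha$ can only hold simultaneously at the single point $u=v=c$ where $c^2 = \omega/\alpha$... wait, one must be careful: the correct pinning is that $u^2 + v^2 \ge 2uv \ge 2\omega/\alpha \ge 1$ forces equality throughout, hence $u=v$ and $u^2+v^2 = 1$, giving $u = v = 1/\sqrt 2$ as the only admissible constant; the value $c = \sqrt{(1+\omega)/(2+\alpha)}$ then arises as the genuine constant solution of \eqref{mainsystem} once one plugs $u=v$ into the equations. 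So first I would revisit the argument of Section 2 and check which of its steps survive when \eqref{cond} is replaced by \eqref{cond2}. The bound $u^2+v^2 \le 1$ should be provable exactly as before (it uses the structure of $\RHSA+\RHSB$ and does not really need \eqref{cond}); the delicate point is the companion inequality. Under \eqref{cond2}, combining $u^2+v^2 \le 1$ with $2uv \le u^2+v^2$ gives $uv \le 1/2 \le \omega/\alpha$, which is the reverse of \eqref{keybound}; so the honest statement to aim for is that \emph{both} $u^2+v^2 \le 1$ and (after the analysis) $u^2 + v^2 \ge 1$ hold, forcing $u \equiv v$ and $u^2 \equiv 1/2$ — but this contradicts $c = \sqrt{(1+\omega)/(2+\alpha)}$ unless $\omega/\alpha$ is exactly tuned, so in fact the right target must be an inequality of the form $uv \le \omega/\alpha$ paired with the correct lower bound, and I would first pin down precisely which pair of one-sided bounds the Section 2 machinery yields in regime \eqref{cond2}.

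Once the correct pair of bounds is identified, the second step is to derive pointwise rigidity. Set $w = u - v$. Subtracting the two equations in \eqref{mainsystem} gives a linear equation $\Delta w = c(u,v)\, w$ for a bounded coefficient $c(u,v)$; I would compute $P(u,v) - Q(u,v) = (u-v)\big[-(1-u^2-v^2) + (\omega - \alpha uv)\cdot(\text{sign factor})\big]$ and check that, in regime \eqref{cond2}, the bracket has a definite sign, so that $\Delta w$ and $w$ have opposite signs and $w$ satisfies a differential inequality forcing, together with boundedness (from \eqref{keybound}-type bounds and elliptic estimates), $w \equiv 0$. Then $u \equiv v =: c$, and the single equation $\Delta c = -c(1 - 2c^2) - c(\omega - \alpha c^2) = -c(1+\omega) + c^3(2+\alpha)$ must hold; since the solution is a bounded positive entire function, a Liouville-type argument (or simply testing against the equation, using that $c$ is bounded so $\Delta c$ is bounded and the nonlinearity has the right monotonicity) forces $c$ to be the constant root $c^2 = (1+\omega)/(2+\alpha)$, consistent since \eqref{cond2} guarantees $(1+\omega)/(2+\alpha) \le \omega/\alpha$ is compatible with the bounds.

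I expect the main obstacle to be the first step: without any boundary condition at infinity, one cannot use maximum-principle arguments on $\RR^N$ directly, and the bounds in Proposition \ref{propbounds} genuinely use \eqref{cond} somewhere in their proof (the paper stresses this is "non-trivial extra-work"), so I must re-run that argument and isolate the inequality that flips. The likely resolution is that the Section 2 estimates actually produce the one-sided bound $u^2 + v^2 \le 1$ unconditionally, and separately an estimate on $\Delta(u^2+v^2)$ or on $\Delta(uv)$ whose sign is governed by $\tfrac12 - \omega/\alpha$; in regime \eqref{cond2} this sign reverses and, combined with boundedness, yields $u^2+v^2 \equiv 1$ and $uv \equiv 1/2$ — no, again that over-determines — so more plausibly it yields that the function $\omega - \alpha uv$ has a fixed sign, which is exactly what makes the $w$-equation of the second step a genuine (cooperative-type) linear inequality with no coupling obstruction. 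The technical heart is therefore verifying that in regime \eqref{cond2} the quantity $\omega - \alpha uv$ stays nonnegative (equivalently $uv \le \omega/\alpha$), which follows from $uv \le \tfrac12(u^2+v^2) \le \tfrac12 \le \omega/\alpha$ and uses only the unconditional bound $u^2+v^2\le 1$; granting that, the second and third steps are routine, and the constant $c$ is forced by substitution into \eqref{mainsystem}.
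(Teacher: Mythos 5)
Your proposal contains a fatal error at its foundation: the claim that the bound $u^2+v^2\le 1$ survives ``unconditionally'' in the regime \eqref{cond2} is false, and the constant solution itself is the counterexample. Indeed, for $u\equiv v\equiv c$ with $c^2=\frac{1+\omega}{2+\alpha}$ one has $u^2+v^2=\frac{2(1+\omega)}{2+\alpha}>1$ precisely when $\frac{\omega}{\alpha}>\frac12$. Consequently the chain $uv\le\frac12(u^2+v^2)\le\frac12\le\frac{\omega}{\alpha}$, which you identify as ``the technical heart'', collapses, and with it the sign information on $\omega-\alpha uv$ that your second step needs. The reason the bound fails is visible in the differential inequality $\Delta A\ge 2A^2-2A+4uv(\alpha uv-\omega)$ for $A=u^2+v^2$: to conclude $A\le 1$ one needs $\alpha uv-\omega\ge 0$, i.e.\ exactly the \emph{companion} lower bound on $uv$, which is false at the constant solution when $\frac{\omega}{\alpha}>\frac12$. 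So the two bounds cannot be decoupled; they must be bootstrapped against each other, and the correct thresholds in this regime are not $1$ and $\frac{\omega}{\alpha}$ but $\frac{2(1+\omega)}{2+\alpha}$ and $\frac{1+\omega}{2+\alpha}$.

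This is in fact how the paper proceeds: the conclusion $(u,v)\equiv(c,c)$ is equivalent, by AM--GM, to the pair of inequalities $u^2+v^2\le\frac{2(1+\omega)}{2+\alpha}$ and $uv\ge\frac{1+\omega}{2+\alpha}$, and these are proved by rerunning the bootstrap of Proposition \ref{propbounds} with $m_*,n_*$ redefined relative to these new thresholds (the inequality for $A$ now also uses $uv\le\frac12A$ to produce a second upper bound involving $1+\omega$, and the fixed-point analysis of $h_1\circ h_2$ is redone around $t=\frac{2(1+\omega)}{2+\alpha}$ in two cases according to the sign of $\frac{1+\omega}{2+\alpha}-\frac{\omega}{2\alpha}$). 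Once both bounds are in hand, $u\equiv v\equiv c$ follows immediately and no separate equation for $w=u-v$ is needed. Your proposed $w$-equation route has its own difficulties even granting the bounds: the coefficient in $\Delta w=\big[(u^2+v^2-1)+(\omega-\alpha uv)\big]w$ is only nonnegative (it vanishes identically at the solution when $\frac{\omega}{\alpha}=\frac12$), and a bounded function satisfying $\Delta w^+\ge 0$ on $\RR^N$, $N\ge 3$, need not vanish, so a Liouville argument does not close without strict positivity of the coefficient. As written, the proposal does not constitute a proof; the missing idea is the recalibrated two-sided bootstrap on the pair $(u^2+v^2,\,uv)$.
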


The paper is organized as follows: firstly, we prove our key a priori bounds, rough upper and lower bounds as well as the refined bounds of Proposition \ref{propbounds}. Then we make the moving plane device work to get monotonicity and the one dimensional property. Lastly, we study the properties of the one-dimensional solutions: existence, uniqueness up to translations and exponential decay to constant at infinity.

\section{A priori bounds. Proofs of Proposition \ref{propbounds}  and Theorem \ref{maintheo2}.}

The proofs of Proposition \ref{propbounds}  and Theorem \ref{maintheo2} follow from the same treatment. Note that the conclusion that $u \equiv v \equiv c$ in Theorem \ref{maintheo2} is equivalent to the pair of inequalities that $u^2 + v^2 \leq 2c^2$ and $uv \geq c^2$, which resembles \eqref{keybound}.

The rough idea of the proof is to bound the supremum of $u^2 + v^2$ in terms of the infimum of $uv$ and vice versa in such a way that the bounds self-bootstrap to the desired bounds. In the proof, we by-pass the fact that no boundary condition is imposed on $(u,v)$ at infinity by using Kato's inequality as in  \cite{Brezis, FSS}.

\subsection{A non-degeneracy estimate}

We start with a result which gives positive lower and upper bounds for $u + v$ for positive solutions to \eqref{mainsystem}.

\begin{lemma}\label{Lem:14V19-LB}
	Let $(\vone,\vtwo)$ be a positive regular solution of \eqref{mainsystem}. Then
	\[
	\Big[ \frac{ 1 + \omega}{\max (\frac{2 + \alpha}{4}, 1)}\Big]^{1/2} \leq
	\vone + \vtwo \leq \Big[ \frac{ 1 + \omega}{\min (\frac{2 + \alpha}{4}, 1)}\Big]^{1/2} \text{ in } \RR^n.
	\]
	In particular $\vone, \vtwo \in L^{\infty}(\RR^n).$
\end{lemma}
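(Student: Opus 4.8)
The plan is to work with the sum $w = u+v$, which should satisfy a scalar semilinear equation with a reaction term that is sign-definite once $w$ leaves the interval where $w^2$ stays near $1+\omega$ (suitably normalized). Adding the two equations in \eqref{mainsystem} gives
\[
\Delta(u+v) = -(u+v)(1-u^2-v^2) - (u+v)(\omega - \alpha uv),
\]
so that, writing $w=u+v$ and using $u^2+v^2 = w^2 - 2uv$, we get
\[
\Delta w = -w\big(1+\omega\big) + w\big(u^2+v^2\big) + \alpha\, w\, uv
         = -w(1+\omega) + w\big(w^2 - 2uv\big) + \alpha w\, uv .
\]
The term $uv$ is not controlled directly by $w$, but the elementary bounds $0 \le uv \le w^2/4$ (AM–GM, valid since $u,v>0$) let us sandwich the reaction term between two functions of $w$ alone: the coefficient of $uv$ in $-\Delta w$ is $(2-\alpha)w$, which after optimizing over $uv \in [0,w^2/4]$ produces
\[
-\Delta w \ge w(1+\omega) - \max\!\Big(1,\tfrac{2+\alpha}{4}\Big)\,w^3
\quad\text{and}\quad
-\Delta w \le w(1+\omega) - \min\!\Big(1,\tfrac{2+\alpha}{4}\Big)\,w^3 .
\]
These two differential inequalities are exactly what is needed: the upper bound on $w$ follows from the first inequality (the reaction is negative once $w$ is too large), and the lower bound from the second.

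The technical device for turning these differential inequalities into pointwise bounds, in the absence of any condition at infinity, is Kato's inequality as indicated in the excerpt. Concretely, for the upper bound I would set $W = \big(w^2 - \tfrac{1+\omega}{\max(1,(2+\alpha)/4)}\big)_+$ and show, using $\Delta(w^2) = 2w\Delta w + 2|\nabla w|^2 \ge 2w\Delta w$ together with the first differential inequality above, that $\Delta W \ge c\, W$ in the distributional (or viscosity) sense for a positive constant $c$ on the set $\{W>0\}$, hence $\Delta W \ge c\, W$ everywhere by Kato's inequality. A nonnegative subsolution of $\Delta W \ge cW$ on $\RR^n$ with $c>0$ must vanish identically — this is a standard Liouville-type fact (e.g. test against a cutoff, or use the Keller–Osserman/comparison argument, or simply note that $W$ cannot have an interior positive maximum and is bounded below by $0$, combined with a spreading argument). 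This yields $w \le \big[(1+\omega)/\min(1,(2+\alpha)/4)\big]^{1/2}$... wait — one must be careful which constant goes where: the upper bound on $w$ uses the *larger* cubic coefficient $\max(1,(2+\alpha)/4)$ in the denominator, giving $w \le [(1+\omega)/\max(\cdots)]^{1/2}$ is too small; rather the negative reaction for large $w$ kicks in at the threshold $w^2 = (1+\omega)/\max(1,(2+\alpha)/4)$... — I would recheck the direction here against the statement, but the mechanism is that each inequality forces $w^2$ on the correct side of the corresponding threshold. The lower bound is handled symmetrically with $W = \big(\tfrac{1+\omega}{\max(1,(2+\alpha)/4)} - w^2\big)_+$ and the other differential inequality, again via Kato and the Liouville argument, using crucially that $w$ is already known to be bounded above (so the cubic term is controlled) and that $w>0$ so the threshold set is where $w$ is small.

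The main obstacle I anticipate is the lower bound rather than the upper bound. For the upper bound, $W=(w^2-\text{const})_+$ is automatically bounded once we know $w$ is bounded, but a priori we do not — so the Liouville step must be run in a form that does not presuppose boundedness (Kato's inequality plus a local test-function argument of the type in \cite{Brezis,FSS} handles this, since $\Delta W \ge cW \ge 0$ makes $W$ subharmonic and a nonnegative subharmonic function satisfying $\Delta W \ge cW$ on all of $\RR^n$ is forced to be zero). For the lower bound, one first needs the upper bound in hand so that the cubic term $w^3$ is Lipschitz on the relevant range, and then the same subsolution-to-Liouville scheme applies to $(\text{const}-w^2)_+$; the delicate point is checking that the reaction term, after the $uv$-sandwiching, has the right sign on $\{w^2 < \text{const}\}$, which is where the precise value of the constant $\min/\max((2+\alpha)/4,1)$ enters. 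Once both bounds on $w=u+v$ are established, $u,v \le w \le \text{const}$ gives $u,v \in L^\infty(\RR^n)$ immediately.
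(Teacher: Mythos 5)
Your starting point is the paper's: set $w = u+v$, compute $\Delta w = w(u^2+v^2+\alpha uv - 1 - \omega)$, and sandwich $u^2+v^2+\alpha uv = w^2 + (\alpha-2)uv$ between $\min(\tfrac{2+\alpha}{4},1)\,w^2$ and $\max(\tfrac{2+\alpha}{4},1)\,w^2$ using $0 \le uv \le w^2/4$. (For the direction you flagged: the inequality with the $\min$ coefficient gives the \emph{upper} bound $w \le [(1+\omega)/\min(\cdot)]^{1/2}$, the one with the $\max$ gives the lower bound; the constants in the statement are consistent with this.) However, there are two genuine gaps. The more serious one is the lower bound. The relevant differential inequality is $-\Delta w \ge \max(\tfrac{2+\alpha}{4},1)\, w(\theta^2 - w^2)$ with $\theta^2 = (1+\omega)/\max(\tfrac{2+\alpha}{4},1)$, and its right-hand side \emph{degenerates as $w\to 0$}: Kato only yields $\Delta(\theta-w)_+ \ge \max(\cdot)\, w(\theta+w)(\theta-w)_+$, and without a uniform positive lower bound on $w$ you cannot extract a superlinear power of $(\theta-w)_+$, which is what the Liouville lemma of Brezis requires. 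The difficulty is not the sign of the reaction term on $\{w<\theta\}$, as you suggest, but this degeneracy of its coefficient. The paper fills the gap with a separate, non-trivial step: after rescaling, $w$ is a positive supersolution of the Allen--Cahn equation, and a sliding argument against $\varepsilon\psi$, where $\psi$ is a non-trivial minimizer of the Allen--Cahn energy on a large ball (Lemma \ref{Lem:14V19-MM}), gives a uniform bound $w \ge \delta > 0$; only then does Kato produce $\Delta(\theta-w)_+ \ge \delta\max(\cdot)[(\theta-w)_+]^2$ and Brezis's lemma apply. This entire mechanism is missing from your proposal.

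The second gap is your Liouville step. The claim that a nonnegative function satisfying $\Delta W \ge cW$ on $\RR^n$ with $c>0$ must vanish is false: $W(x) = \cosh(\sqrt{c}\,x_1) - 1$ is a counterexample, and the arguments you sketch (no interior maximum, spreading) require a priori boundedness of $W$, which is precisely what is not available here. The unconditional result (\cite[Lemma 2]{Brezis}) concerns $\Delta W \ge W^p$ with $p>1$, and the paper carefully arranges superlinear right-hand sides: $\Delta(w-\gamma)_+ \ge \min(\cdot)(w-\gamma)_+^3$ for the upper bound and the quadratic inequality above for the lower bound. Your choice $W = (w^2-\gamma^2)_+$ can in fact be salvaged for the upper bound, since $w^2 \ge w^2 - \gamma^2$ gives $\Delta W \ge 2\min(\cdot)\,W^2$ on $\{W>0\}$, but as written the linear inequality you invoke does not close the argument.
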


\begin{proof}
	Let $w = \vone + \vtwo$. Then
	\begin{equation}
	\Delta w = w(\vone^2 + \vtwo^2 + \alpha\vone\vtwo - 1 - \omega).
	\label{Eq:14V18-E1}
	\end{equation}
	Thus, by the strong maximum principle, we have $w > 0$ in $\RR^n$.
	
	In view of \eqref{Eq:14V18-E1}, we have
	\begin{equation}
	\Delta w \geq w\Big( \min (\frac{2 + \alpha}{4}, 1)  w^2- 1 - \omega\Big) = \min (\frac{2 + \alpha}{4}, 1) w (w^2 - \gamma^2)
	\label{Eq:14V18-E2}
	\end{equation}
	where $\gamma =\Big[ \frac{ 1 + \omega}{\min (\frac{2 + \alpha}{4}, 1)}\Big]^{1/2}$.
	
	By Kato's inequality, this implies that
	\[
	\Delta (w - \gamma)_+ \geq  \min (\frac{2 + \alpha}{4}, 1)  (w - \gamma)_+^3
	\]
	which further implies $(w - \gamma)_+ \equiv 0$ (see \cite[Lemma 2]{Brezis}), i.e.
	\begin{equation}
	w \leq \gamma \text{ in } \RR^n .
	\label{Eq:14V18-E3}
	\end{equation}
	Returning to \eqref{Eq:14V18-E1}, we see that
	\begin{equation}
	\Delta w \leq w\Big( \max (\frac{2 + \alpha}{4}, 1)  w^2- 1 - \omega\Big) .
	\label{Eq:14V18-E5}
	\end{equation}
	and thus the rescaled function  $\tilde w(x) = \max (\frac{(2 + \alpha)^{1/2}}{2}, 1) (1 + \omega)^{-1/2}w((1 + \omega)^{-1/2}x)$
	is a positive supersolution of the Allen-Cahn equation, i.e., it satisfies
	
	\begin{equation}
	-\Delta \tilde w \geq \tilde w\Big( 1 - \tilde w^2 \Big) \quad \text{ in }\,\, \RR^n.
	\label{Eq:14V18-E6}
	\end{equation}

	To proceed, we need the following lemma.
	
	\begin{lemma}\label{Lem:14V19-MM}
		There exists $R_0 > 0$ such that, for $ R\geq R_0$, the functional
		\[
		I[\varphi] = \int_{B_R} \Big[\frac{1}{2} |\nabla \varphi|^2 + \frac{1}{4}(1 - \varphi^2)^2\Big]\,dx
		\]
		admits a non-trivial minimizer $\psi$ in $H_0^1(B_R)$. Furthermore,  $\psi$ is a smooth function in ${\overline {B_R}}$  satisfying $ 0 < \psi < 1$ in $ B_R$.
	\end{lemma}
	
	Let us assume the above lemma for the moment and continue with the proof of Lemma \ref{Lem:14V19-LB}. Let $R_0$ be the constant in Lemma \ref{Lem:14V19-MM} and $ m := \min_{{\overline {B_{R_0}}}} {\tilde w} >0$ then, for every $\varepsilon \in (0, \min \{m,1\}) $,  the function $\psi_{\varepsilon} := \varepsilon \psi$ satisfies

	\[
	-\Delta \psi_{\varepsilon}  \leq \psi_{\varepsilon} (1 - \psi_{\varepsilon}^2) \quad\text{ in }\,\, \RR^n,
	\]
	and
	\[
	\psi_{\varepsilon}  \leq \tilde w \quad\text{ in } \,\,B_{R_0}, \qquad \psi_{\varepsilon} = 0 \quad\text{ on }\,\, \partial B_{R_0}.
	\]
	The sliding method (see \cite[Lemma 3.1]{BCN97}), then gives that $\tilde w\geq \varepsilon$ in $ \RR^n$ and so $ w \geq \delta $ in $ \RR^n$, for some $\delta>0$.
	
	Now, from \eqref{Eq:14V18-E5} we get
	
	\begin{equation}
	- \Delta w \geq w\Big( 1+\omega - \max (\frac{2 + \alpha}{4}, 1)  w^2\Big) \geq \delta \max(\frac{2 + \alpha}{4}, 1) (\theta^2 - w^2)
	\label{Eq:14V18-E7}
	\end{equation}
	where $\theta =\Big[ \frac{ 1 + \omega}{\max (\frac{2 + \alpha}{4}, 1)}\Big]^{1/2}$. Hence, by Kato's inequality we have
	
	\begin{equation}
	\Delta (\theta - w)_+ \geq  \delta \max(\frac{2 + \alpha}{4}, 1) [(\theta - w)_+]^2
	\label{Eq:14V18-E8}
	\end{equation}
	which implies $(\theta - w)_+ \equiv 0$ (see \cite[Lemma 2]{Brezis}), i.e.
	\begin{equation}
	w \geq \Big[ \frac{ 1 + \omega}{\max (\frac{2 + \alpha}{4}, 1)}\Big]^{1/2}\text{ in } \RR^n .
	\label{Eq:14V18-E9}
	\end{equation}
	
\end{proof}

\begin{proof}[Proof of Lemma \ref{Lem:14V19-MM}]
We have that $I[0] = \frac{1}{4} |B_R|$. Suppose that $R > 1$ and consider the function
\[
\varphi(x) = \min(R - |x| , 1) \text{ for } x \in B_R.
\]
We have
\[
I[\varphi] =  \int_{B_R \setminus B_{R-1}} \Big[\frac{1}{2} |\nabla \varphi|^2 + \frac{1}{4}(1 - \varphi^2)^2\Big]\,dx \leq \frac{3}{4} |B_R \setminus B_{R-1}|.
\]
Clearly, for $R$ sufficiently large, $I[\varphi] < I[0]$ and so $I$ posseses a non-trivial minimizer $\psi$ in $ H^1_0(B_R)$. Replacing $\psi$ by $ \min \{ \vert \psi \vert, 1\}$, if necessary, we may assume that $ 0 \le \psi \le 1$ in ${\overline {B_R}}$. Therefore $ \psi$ is a weak solution of the Allen-Cahn equation in  ${\overline {B_R}}$ and the remaining part of the claim follows by standard elliptic regularity and by the strong maximum principle.
\end{proof}


\subsection{Proof of Proposition \ref{propbounds}}
\begin{proof} We have $u, v > 0$, and, by Lemma \ref{Lem:14V19-LB}, there is some $p > 1$ such that
\begin{equation}
\vone + \vtwo \geq \frac{1}{p}.
\label{Eq:StrPositivity}
\end{equation}

Let $A = \vone^2 + \vtwo^2$, $B = -\ln(\vone \vtwo)$ and
\begin{equation}
m = \sup A, \quad m_* = \max(m, 1), \quad n = \sup B,  \quad \text{ and } \quad n_* = \max(n, -\ln \frac{\omega}{\alpha}).
	\label{Eq:26VI19-R1}
\end{equation}

\medskip
\noindent 1. We prove that
\begin{equation}
m \leq \frac{1}{2} (1 + \sqrt{1 - 8s_*}) \text{ where } s_* = \min_{t \geq e^{-n_*}} (\alpha t^2 - \omega t).
	\label{Eq:15IV19-R1}
\end{equation}
In particular, as $s_* \geq \frac{-\omega^2}{4\alpha}$, we have
\begin{equation}
m \leq \frac{1}{2} \Big(1 + \sqrt{1 + \frac{2\omega^2}{\alpha}}\Big) < \infty.
	\label{Eq:16IV19-M1}
\end{equation}

We have
\begin{align*}
\Delta A
	&\geq 2\vone \RHSA(\vone, \vtwo) + 2\vtwo \RHSB(\vone, \vtwo) = 2\vone^4 + 2\vtwo^4 + 4(\alpha + 1)\vone^2 \vtwo^2 - 2\vone^2 - 2\vtwo^2 - 4\omega \vone \vtwo\\
	&= 2A^2 - 2A + 4\alpha e^{-2B} - 4\omega e^{-B}.
\end{align*}
Note that $\alpha e^{-2B} - \omega e^{-B} \geq s_*$, and, as $n_* \geq - \ln \frac{\omega}{\alpha}$, $s_* \leq 0$. It follows that
\[
\Delta A
	\geq 2A^2 - 2A + 4s_* = 2\Big(A - \frac{1}{2} (1 + \sqrt{1 - 8s_*})\Big) \Big(A - \frac{1}{2} (1 - \sqrt{1 - 8s_*})\Big),
\]
and so, by Kato's inequality,
\[
\Delta  \Big(A - \frac{1}{2} (1 + \sqrt{1 - 8s_*})\Big)_+
	\geq 2A^2 - 2A + 4s_* = 2\Big(A - \frac{1}{2} (1 + \sqrt{1 - 8s_*})\Big)_+^2.
\]
Estimate \eqref{Eq:15IV19-R1} follows from \cite[Lemma 2]{Brezis}.

\medskip
\noindent 2. We prove that
\begin{equation}
n \leq - \ln \frac{\omega}{\alpha + 2 - \frac{2}{m_*}}.
	\label{Eq:15IV19-R2}
\end{equation}
Equivalently,
\begin{equation}
\vone\vtwo \geq \frac{\omega}{\alpha + 2 - \frac{2}{m_*}}.
	\label{Eq:15IV19-R1al}
\end{equation}

We have
\begin{align*}
\Delta B
	&\geq -\frac{1}{\vone} \RHSA(\vone,\vtwo) - \frac{1}{\vtwo} \RHSB(\vone, \vtwo)
		= -(\alpha + 2)(\vone^2 + \vtwo^2) + 2 + \frac{\omega(\vone^2 + \vtwo^2)}{\vone\vtwo}\\
	&= -(\alpha + 2) A + 2 + \omega A e^{B} = 2(1 - \frac{A}{m_*}) + (\alpha + 2 - \frac{2}{m_*}) A(\frac{\omega}{\alpha + 2 - \frac{2}{m_*}}e^B - 1).
\end{align*}
Using Kato's inequality, the inequality $e^x - 1 \geq \frac{1}{2}x^2$ for $x \geq 0$ and recalling \eqref{Eq:StrPositivity}, we get
\begin{align*}
\Delta (B - \ln \frac{\alpha + 2 - \frac{2}{m_*}}{\omega})_+
	&\geq \frac{\alpha + 2 - \frac{2}{m_*}}{4p^2}  (B - \ln \frac{\alpha + 2 - \frac{2}{m_*}}{\omega})_+^2.
\end{align*}
 We deduce that $(B - \ln \frac{\alpha + 2 - \frac{2}{m_*}}{\omega})_+ \equiv 0$, again thanks to \cite[Lemma 2]{Brezis}. This proves \eqref{Eq:15IV19-R2}.

\medskip
\noindent 3. We prove that $m_* = 1$ or $n_* = -\ln \frac{\omega}{\alpha}$.

Assume by contradiction that the above does not hold. Then $m = m_* > 1$ and $n = n_* > -\ln \frac{\omega}{\alpha}$.

Let
\begin{align}
h_1(t)
	&= \frac{1}{2}\Big(1 + \sqrt{1 - 8\alpha t^2 + 8\omega t}\Big),\label{Eq:19h1Def}\\
h_2(t)
	&= \frac{\omega t}{(\alpha + 2)t - 2}.\label{Eq:19h2Def}
\end{align}
We claim that
\begin{equation}
m \leq h_1(h_2(m)).
	\label{Eq:15IV19-R3}
\end{equation}

\medskip
\noindent\underline{Case (i)}: $\alpha \geq 2$. From \eqref{Eq:15IV19-R2}, we have $n \leq - \ln \frac{\omega}{\alpha + 2} \leq -\ln \frac{\omega}{2\alpha}$. It follows that
\[
s_* = \min_{t \geq e^{-n}} (\alpha t^2 - \omega t) = \alpha e^{-2n} - \omega e^{-n}.
\]
Plugging this into \eqref{Eq:15IV19-R1} yields
\[
m \leq \frac{1}{2}(1 + \sqrt{1 - 8\alpha e^{-2n} + 8\omega e^{-n}}) = h_1(e^{-n}).
\]
Since $h_1$ is decreasing in $[\frac{\omega}{2\alpha},\infty)$, this together with \eqref{Eq:15IV19-R2} implies \eqref{Eq:15IV19-R3}.

\medskip
\noindent\underline{Case (ii)}: $\alpha < 2$. As $\alpha < 2$, $\omega < \frac{\alpha}{2} < \frac{2\alpha}{2 - \alpha}$ and so, by \eqref{Eq:16IV19-M1},
\[
m \leq \frac{1}{2}\Big(1 + \sqrt{1 + \frac{2\omega^2}{\alpha}}\Big) < \frac{1}{2}\Big(1 + \sqrt{1 + \frac{8\alpha}{(2-\alpha)^2}}\Big) = \frac{2}{2 - \alpha}.
\]
Inserting this into \eqref{Eq:15IV19-R2} yields $n \leq - \ln \frac{\omega}{2\alpha}$. We can now repeat the proof of Case (i) to reach \eqref{Eq:15IV19-R3}.

We now compute
\begin{align}
\frac{d}{dt} h_1(h_2(t))
	&= h_1'(h_2(t)) h_2'(t) = \frac{2(2\alpha h_2(t) - \omega)}{\sqrt{1 - 8\alpha h_2(t)^2 + 8\omega h_2(t)}}  \frac{2\omega}{[(\alpha + 2)t - 2]^2}\nonumber\\
	&= \frac{1}{\sqrt{1 - 8\alpha h_2(t)^2 + 8\omega h_2(t)}}  \frac{4\omega^2(\alpha t - 2 t + 2)}{(\alpha t + 2t - 2)^3}.
		\label{Eq:19IV19-T4}
\end{align}
Note that $h_2$ is decreasing in $(\frac{2}{2 + \alpha},\infty)$ and so $h_2(t) < \frac{\omega}{\alpha}$ for $t > 1$. Hence, for $t > 1$, we have $\sqrt{1 - 8\alpha h_2(t)^2 + 8\omega h_2(t)} > 1$, $\alpha t - 2 t + 2 < \alpha t + 2 t - 2$ and $4\omega^2 < \alpha^2 < (\alpha t + 2t - 2)^2$ and so
\[
\frac{d}{dt} h_1(h_2(t)) < 1 \text{ for all } t > 1.
\]
As $h_1(h_2(1)) = 1$, this implies that the inequality equation $t \leq h_1(h_2(t))$ has no solution in $(1,\infty)$. Therefore, \eqref{Eq:15IV19-R3}  implies that $m=1$. This finishes Step 3.

\medskip
\noindent 4. We prove \eqref{keybound}. By Step 3, we have $m_* = 1$ or $n_* = -\ln \frac{\omega}{\alpha}$.

If $m_* = 1$, then, in view of \eqref{Eq:15IV19-R2}, $n \leq -\ln \frac{\omega}{\alpha}$, and so $A \leq 1$ and $B \geq -\ln \frac{\omega}{\alpha}$, which give \eqref{keybound}.

On the other hand, if $n_* = -\ln \frac{\omega}{\alpha}$, then, by \eqref{Eq:15IV19-R1}, $m \leq 1$. Again we obtain $A \leq 1$ and $B \geq -\ln \frac{\omega}{\alpha}$, which also give \eqref{keybound} as desired.

\medskip
\noindent 5. Finally, we prove the trichotomy that either $(u,v) \equiv (a,b)$ or $(u,v) \equiv (b,a)$ or \eqref{boundab} holds.

Suppose that $(u,v) \not\equiv (a,b)$ and $(u,v) \not\equiv (b,a)$. From \eqref{keybound}, we have
\begin{equation}
a \leq \vone, \vtwo \leq b.
	\label{Eq:31I19-M5}
\end{equation}

We note that
\[
\RHSA(b,\vtwo)
	= b(b^2 + \vtwo^2 - 1) + \vtwo(\alpha b \vtwo - \omega) \stackrel{\eqref{Eq:31I19-M5}}{\geq } b(b^2 + a^2 - 1) + \vtwo(\alpha b a - \omega) = 0.
\]
Hence the constant function $b$ satisfies
\[
\Delta b = 0 \leq \RHSA(b,\vtwo).
\]
Since $\vone \leq b$, and $\Delta\vone = \RHSA(\vone, \vtwo)$, the strong maximum principle implies either $\vone < b$ or $\vone \equiv b$. If the latter case holds, the second equation of \eqref{mainsystem} implies that $\vtwo \equiv a$, which contradicts our assumption that $(\vone,\vtwo)\not\equiv (b,a)$. We thus have $\vone < b$.

The remaining inequalites in \eqref{boundab} are shown similarly using
\begin{align*}
\RHSA(a,\vtwo)
	&= a(a^2 + \vtwo^2 - 1) + \vtwo(\alpha a \vtwo - \omega) \stackrel{\eqref{Eq:31I19-M5}}{\leq } a(a^2 + b^2 - 1) + \vtwo(\alpha a b - \omega) = 0,\\
\RHSB(\vone,a)
	&= a( \vone^2 + a^2 - 1) + \vone(\alpha \vone a - \omega) \stackrel{\eqref{Eq:31I19-M5}}{\leq } a(b^2 + a_2^2 - 1) + \vone(\alpha b a - \omega) = 0,\\
\RHSB(\vone, b)
	&= b( \vone^2 + b^2 - 1) + \vone(\alpha \vone b - \omega) \stackrel{\eqref{Eq:31I19-M5}}{\geq } b(a^2 + a_2^2 - 1) + \vone(\alpha  a b - \omega) = 0.
\end{align*}
We omit the details.
\end{proof}

\subsection{Proof of Theorem \ref{maintheo2}}

\begin{proof}
We adapt the proof of Proposition \ref{propbounds}, as the conclusion is equivalent to the following pair of inequalities:
\begin{align*}
\vone^2 + \vtwo^2 \leq \frac{2(1 + \omega)}{2+\alpha} \text{ and } \vone \vtwo \geq \frac{1 + \omega}{2+\alpha}.
\end{align*}

By Lemma \ref{Lem:14V19-LB}, \eqref{Eq:StrPositivity} holds. Let $A = \vone^2 + \vtwo^2$, $B = -\ln(\vone \vtwo)$ and
\begin{align*}
m
	&= \sup A, \quad \tilde m_* = \max(m, \frac{2(1 + \omega)}{2+\alpha}), \\
n
	&= \sup B,  \quad \text{ and } \quad \tilde n_* = \max(n, -\ln \frac{1 + \omega}{2 + \alpha}).
\end{align*}
(Note the difference between the definition of $\tilde m_*$ and $\tilde n_*$ and that of $m_*$ and $n_*$ in the proof of Proposition \ref{propbounds}.)

1. We prove that
\begin{equation}
m \leq \frac{1}{2} \min\Big(1 + \sqrt{1 - 8s_1}, 1 + \omega + \sqrt{(1 + \omega)^2 - 8\alpha s_2}\Big),
	\label{Eq:19IV19-T1}
\end{equation}
where $s_1 = \min_{t \geq e^{-\tilde n_*}} (\alpha t^2 - \omega t) \leq 0$ and $s_2 = e^{-2\tilde  n_*} \leq \frac{(1 + \omega)^2}{8\alpha}$.

As $\tilde n_* \geq -\ln \frac{1 + \omega}{2 + \alpha} \geq - \ln \frac{\omega}{\alpha}$ (due to $\omega \geq \frac{1}{2}\alpha$), $s_1 \leq 0$. Also $s_2 \leq \frac{(1 + \omega)^2}{(2 + \alpha)^2} \leq \frac{(1 + \omega)^2}{8\alpha}$.

The proof of the inequality $m \leq \frac{1}{2} (1 + \sqrt{1 - 8s_1})$ follows from the differential inequality
\begin{align*}
\Delta A
	&\geq 2A^2 - 2A + 4\alpha \vone^2 \vtwo^2 - 4\omega \vone \vtwo,
\end{align*}
exactly as in the proof of \eqref{Eq:15IV19-R1}. To obtain $m \leq \frac{1}{2}(1 + \omega + \sqrt{(1 + \omega)^2 - 8\alpha s_2})$, we use the inequality $v_1v_2 \leq \frac{1}{2}A$ in the above differential inequality:
\begin{align*}
\Delta A
	&\geq 2A^2 - 2(1 + \omega) A + 4\alpha e^{-2B}
		\geq 2A^2 - 2(1 + \omega) A + 4\alpha s_2\\
	&= 2\Big(A - \frac{1}{2}(1 + \omega + \sqrt{(1 + \omega)^2 - 8\alpha s_2})\Big)\Big(A - \frac{1}{2}(1 + \omega - \sqrt{(1 + \omega)^2 - 8\alpha s_2})\Big).
\end{align*}
By Kato's inequality, this leads to
\[
\Delta \Big(A - \frac{1}{2}(1 + \omega + \sqrt{(1 + \omega)^2 - 8\alpha s_2})\Big)_+
	\geq 2 \Big(A - \frac{1}{2}(1 + \omega + \sqrt{(1 + \omega)^2 - 8\alpha s_2})\Big)_+^2
\]
and so, by \cite[Lemma 2]{Brezis},
\[
A \leq \frac{1}{2}(1 + \omega + \sqrt{(1 + \omega)^2 - 8\alpha s_2}).
\]
We have thus proved \eqref{Eq:19IV19-T1}.

\medskip
\noindent 2. As in the proof of Proposition \ref{propbounds}, we have
\begin{equation}
n \leq - \ln \frac{\omega}{\alpha + 2 - \frac{2}{\tilde m_*}}.
	\label{Eq:19IV19-Tp2}
\end{equation}

\medskip
\noindent
3. We show that $\tilde m_* = \frac{2(1 + \omega)}{2+\alpha}$ or $\tilde n_* = -\ln \frac{1 + \omega}{2+\alpha}$. Assume by contradiction that this does not hold, so that $m = \tilde m_* > \frac{2(1 + \omega)}{2+\alpha}$ and $n = \tilde n_* > -\ln \frac{1 + \omega}{2+\alpha}$.

\medskip\noindent\underline{Case (a):} $\frac{1 + \omega}{2 + \alpha} \geq \frac{\omega}{2\alpha}$ (i.e. either $\alpha \geq 2$ or $0 < \alpha < 2$ and $\omega \leq \frac{2\alpha}{2 - \alpha}$).
\medskip

In this case, the argument in Step 3 of the proof of Proposition \ref{propbounds} gives
\begin{equation}
m \leq h_1(h_2(m)),
	\label{Eq:19IV19-T2}
\end{equation}
where $h_1$ and $h_2$ are defined in \eqref{Eq:19h1Def}-\eqref{Eq:19h2Def}

Now note that $h_1(h_2(\frac{2(1+ \omega)}{2 + \alpha} )) = \frac{2(1+ \omega)}{2 + \alpha}$. Thus in order to obtain a contradiction, it suffices to show that
\begin{equation}
\frac{d}{dt} h_1(h_2(t))  < 1 \text{ for all } t >  \frac{2(1+ \omega)}{2 + \alpha} .
	\label{Eq:19IV19-T3}
\end{equation}
To see this, recall formula \eqref{Eq:19IV19-T4} for the derivative of $h_1 \circ h_2$:
\begin{align*}
\frac{d}{dt} h_1(h_2(t))
	&= \frac{1}{\sqrt{1 - 8\alpha h_2(t)^2 + 8\omega h_2(t)}}  \frac{4\omega^2(\alpha t - 2 t + 2)}{[(\alpha + 2)t - 2]^3}.
\end{align*}
Now if $t > \frac{2(1+\omega)}{2 + \alpha}$, then as $h_2$ is decreasing in $(\frac{2}{2 + \alpha}, \infty)$, we have $\frac{\omega}{2 + \alpha} = h_2(\infty) < h_2(t) < h_2( \frac{2(1+\omega)}{2 + \alpha}) = \frac{1 + \omega}{2 + \alpha} < \frac{\omega}{\alpha}$ (thanks to $\alpha < 2\omega$), and so
\[
1 - 8\alpha h_2(t)^2 + 8\omega h_2(t) > 1.
\]
Also, for $t > \frac{2(1+\omega)}{2 + \alpha} > 1$, we have $\alpha t - 2 t + 2 < \alpha t + 2 t - 2$ and $4\omega^2 < (\alpha t + 2t - 2)^2$. \eqref{Eq:19IV19-T3} hence follows. This concludes Case (a).

\medskip\noindent\underline{Case (b):} $\frac{1 + \omega}{2 + \alpha} < \frac{\omega}{2\alpha}$ (i.e. $0 < \alpha < 2$ and $\omega > \frac{2\alpha}{2 - \alpha}$).
\medskip

We start by showing that
\begin{equation}
m \leq \tilde h_1(h_2(m)),
	\label{Eq:19IV19-A1}
\end{equation}
where $h_2$ is defined in \eqref{Eq:19h2Def} and $\tilde h_1$ is defined by
\[
\tilde h_1(t) = \frac{1}{2}(1 + \omega + \sqrt{(1 + \omega)^2 - 8\alpha t^2}).
\]
Indeed, By \eqref{Eq:19IV19-Tp2}, $n \leq -\ln h_2(m)$. Plugging this into \eqref{Eq:19IV19-T1}, we get $m \leq \tilde h_1(e^{-n}) \leq \tilde h_1(h_2(m))$, as $\tilde h_1$ is decreasing in $(0,\infty)$.

Next, a direct computation gives $\tilde h_1(h_2(\frac{2(1+ \omega)}{2 + \alpha} )) = \frac{2(1+ \omega)}{2 + \alpha}$. Thus, as in Case (a), it suffices to show that
\begin{equation}
\frac{d}{dt} \tilde h_1(h_2(t))  < 1 \text{ for all } t >  \frac{2(1+ \omega)}{2 + \alpha} .
	\label{Eq:19IV19-A2}
\end{equation}
We compute
\begin{align*}
\frac{d}{dt} h_1(h_2(t))
	&= h_1'(h_2(t)) h_2'(t) = \frac{4\alpha h_2(t)}{\sqrt{(1 + \omega)^2 - 8\alpha h_2(t)^2}}  \frac{2\omega}{[(\alpha + 2)t - 2]^2}\\
	&= \frac{1}{\sqrt{(1 + \omega)^2 - 8\alpha h_2(t)^2}}  \frac{8\alpha \omega h_2(t)}{[(\alpha + 2)t - 2]^2}.
\end{align*}
Now for $t > \frac{2(1+\omega)}{2 + \alpha}$, we have $\frac{\omega}{2 + \alpha} < h_2(t) < \frac{1 + \omega}{2 + \alpha}$ as in the previous case thanks to the monotonicity of $h_2$. It follows that
\begin{align*}
\frac{d}{dt} h_1(h_2(t))
	&< \frac{1}{\sqrt{(1 + \omega)^2 - 8\alpha \frac{(1 + \omega)^2}{(2 + \alpha)^2}}}  \frac{8\alpha \omega \frac{1 + \omega}{2 + \alpha}}{4\omega^2} = \frac{2\alpha}{(2 - \alpha)\omega} < 1 \text{ for all } t > \frac{2(1+\omega)}{2 + \alpha}.
\end{align*}
This proves \eqref{Eq:19IV19-A2}, and so finishes Case (b). Step 3 is concluded.

\medskip
\noindent
4. Finally, we show that $\vone \equiv \vtwo \equiv \sqrt{\frac{1 + \omega}{2 + \alpha}}$.

By Step 3, we have $\tilde m_* = \frac{2(1+\omega)}{2 + \alpha}$ or $\tilde n_* = -\ln \frac{1+\omega}{2 + \alpha}$.

If $\tilde m_* = \frac{2(1+\omega)}{2 + \alpha}$, then, in view of \eqref{Eq:15IV19-R2}, $n \leq -\ln \frac{1+\omega}{2 + \alpha}$, and so $A \leq \frac{2(1+\omega)}{2 + \alpha}$ and $B \geq -\ln\frac{1+\omega}{2 + \alpha}$, which give $\vone \equiv \vtwo \equiv \sqrt{\frac{1 + \omega}{2 + \alpha}}$.

On the other hand, if $\tilde n_* = -\ln \frac{1+\omega}{2 + \alpha}$, then, by \eqref{Eq:15IV19-R1}, $m \leq  \frac{2(1+\omega)}{2 + \alpha}$. Again we obtain $A \leq \frac{2(1+\omega)}{2 + \alpha}$ and $B \geq -\ln\frac{1+\omega}{2 + \alpha}$, which then give $\vone \equiv \vtwo \equiv \sqrt{\frac{1 + \omega}{2 + \alpha}}$. We conclude the proof.
\end{proof}

\section{Moving plane device}\label{Sec:MovingPlanes}
\subsection{Monotonicity with respect to $x_N$}\label{mono}

We are going to show that if $(u,v)$ is a solution to (\ref{mainsystem})-(\ref{mainBC}), then it is monotone with respect to $x_N$. This relies strongly on the estimates
  (\ref{keybound})-(\ref{boundab}).

\begin{prop}\label{propmonotxN}
Under the assumptions of Theorem \ref{maintheo}, since (\ref{keybound})-(\ref{boundab}) hold, we have
\begin{equation}\label{monotxN}
\pa_N u>0 \quad \text{and} \quad \pa_N v<0 \qquad \text{in $\R^N$}.
\end{equation}
\end{prop}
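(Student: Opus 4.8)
The plan is to run the classical moving plane method in the $x_N$-direction, exploiting the crucial fact — already established in Proposition \ref{propbounds} — that on the range $a \le u,v \le b$ the nonlinearities have a definite monotonicity structure: writing the system in the form \eqref{newsystem}, the ``diagonal'' part $u\,f_1(u,v)$ has $f_1>0$ and $f_1$ decreasing in each variable, while the ``off-diagonal'' part $v\,f_2(u,v)$ has $f_2<0$ and $f_2$ decreasing in each variable. For $\lambda \in \RR$ set $u_\lambda(x',x_N) = u(x',2\lambda - x_N)$, $v_\lambda(x',x_N) = v(x',2\lambda - x_N)$, and consider on the half-space $\Sigma_\lambda = \{x_N > \lambda\}$ the differences $U_\lambda = u_\lambda - u$ and $V_\lambda = v - v_\lambda$ (note the opposite signs, chosen because $u$ is expected increasing and $v$ decreasing). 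The boundary conditions \eqref{mainBC} with $a<b$ say that for $\lambda$ very negative both $U_\lambda$ and $V_\lambda$ are positive near $x_N = \lambda$ and converge to $b-a>0$ as $x_N \to -\infty$ inside $\Sigma_\lambda$; I would like to show $U_\lambda, V_\lambda \ge 0$ in $\Sigma_\lambda$ for all $\lambda$, and then that monotonicity \eqref{monotxN} follows by letting the planes sweep and differentiating.

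The key linear mechanism is this: subtracting the equations for $u$ and $u_\lambda$ and using the mean value theorem on the $f_i$'s, one gets a system of the schematic form
\begin{align*}
-\Delta U_\lambda &= c_{11}\,U_\lambda + c_{12}\,V_\lambda + (\text{good sign terms}),\\
-\Delta V_\lambda &= c_{22}\,V_\lambda + c_{21}\,U_\lambda + (\text{good sign terms}),
\end{align*}
where, crucially, the off-diagonal coefficients $c_{12}, c_{21}$ are \emph{nonnegative} on the set where $a \le u,v,u_\lambda,v_\lambda \le b$ — this is exactly the consequence of $f_2$ being negative and decreasing, combined with the sign convention $V_\lambda = v - v_\lambda$. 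Thus the linearized system is \emph{cooperative} on the relevant range, which is what makes a maximum principle available even though the original system \eqref{mainsystem} admits none. I would set $W_\lambda = \min(U_\lambda, V_\lambda)$, or work with $(U_\lambda)_-$ and $(V_\lambda)_-$ and add the resulting Kato-type inequalities, to get a single scalar differential inequality $-\Delta W_\lambda \le C\, (W_\lambda)_-$ -type statement on $\Sigma_\lambda$, with $W_\lambda \ge 0$ on $\partial\Sigma_\lambda$ and $\liminf W_\lambda \ge 0$ at infinity; then one concludes $W_\lambda \ge 0$. The ``moving'' part is standard: let $\lambda_0 = \sup\{\lambda : U_\mu, V_\mu \ge 0 \text{ in } \Sigma_\mu \text{ for all } \mu \le \lambda\}$, show $\lambda_0 = +\infty$. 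If $\lambda_0$ were finite, by continuity $U_{\lambda_0}, V_{\lambda_0} \ge 0$, and by the strong maximum principle applied to the cooperative system (unless $(u,v)$ is one of the trivial solutions $(a,b),(b,a)$, which are excluded by \eqref{mainBC}) one gets $U_{\lambda_0}, V_{\lambda_0} > 0$ in $\Sigma_{\lambda_0}$; then a narrow-region / boundary-point argument pushes the plane slightly further, contradiction. Finally, $U_\lambda, V_\lambda \ge 0$ for all $\lambda$ gives $\partial_N u \ge 0$, $\partial_N v \le 0$, and one more application of the strong maximum principle (using that $\partial_N u$, $-\partial_N v$ solve a cooperative linearized system and using \eqref{boundab}) upgrades these to the strict inequalities \eqref{monotxN}.

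The main obstacle — and the place where ``new estimates'' are genuinely needed, as the authors warn — is twofold. First, the domain $\Sigma_\lambda$ is an unbounded half-space, so one cannot use compactness or a plain maximum principle on bounded domains; the standard fixes are either a sliding-type comparison with the explicit limits $a,b$ near $x_N = -\infty$ (using that $U_\lambda, V_\lambda \to b-a > 0$ uniformly there, which quarantines the ``bad'' behaviour to a slab $\{\lambda \le x_N \le R\}$ of bounded width in the $x_N$ variable but still unbounded in $x'$), or a Kato-inequality argument à la \cite{Brezis, FSS} that dispenses with boundary control at infinity altogether — I would lean on the latter, mirroring Section 2. Second, and more delicate, the coefficients $c_{ij}$ above are only sign-good \emph{on the range} $[a,b]$; the reflected functions $u_\lambda, v_\lambda$ automatically lie in $[a,b]$ by Proposition \ref{propbounds}, so this is in fact fine, but one must be careful that the mean-value intermediate points also lie in $[a,b]$, which holds precisely because $[a,b]$ is an interval containing all four values $u,v,u_\lambda,v_\lambda$. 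Starting the method (showing $U_\lambda, V_\lambda \ge 0$ for $\lambda \ll 0$) requires the uniformity in $x'$ of the limits in \eqref{mainBC}: for $\lambda$ sufficiently negative, $u_\lambda$ and $v$ are both within $\epsilon$ of $b$ and $u, v_\lambda$ within $\epsilon$ of $a$ throughout a neighbourhood of $\Sigma_\lambda$, forcing the differences positive — this is the one spot where the boundary condition, rather than just the a priori bounds, is used.
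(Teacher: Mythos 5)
Your skeleton is the right one, and it is essentially the paper's: reflect in $x_N$, work with the oppositely signed differences $u_\lambda-u$ and $v-v_\lambda$, and observe that the bounds \eqref{keybound}--\eqref{boundab} make the linearized system for these differences cooperative (the off-diagonal coefficient is $2(1+\alpha)uv-\omega\ge \omega(2+\alpha)/\alpha>0$ precisely because $uv\ge\omega/\alpha$). That identification is correct and is indeed what makes any maximum principle available. One orientation slip to fix: with $\Sigma_\lambda=\{x_N>\lambda\}$ the expected inequality is $u_\lambda\le u$ (the reflection samples $u$ at smaller $x_N$), and your asymptotics ``$U_\lambda\to b-a$ as $x_N\to-\infty$ inside $\Sigma_\lambda$'' only make sense if $\Sigma_\lambda=\{x_N<\lambda\}$; also, on the hyperplane itself the differences vanish rather than being positive. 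These are fixable, but the two points below are not cosmetic.

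First, the reduction to ``a single scalar inequality $-\Delta W_\lambda\le C\,(W_\lambda)_-$, hence $W_\lambda\ge 0$'' is not a theorem on an unbounded half-space: the zeroth-order coefficient one obtains is merely bounded and has no favourable sign in the middle of the range $[a,b]$ (for instance $1-3u^2-(1+\alpha)v^2$ can be positive when $\omega/\alpha$ is small), and a linear inequality of this type with $W\ge0$ on the boundary does not force $W\ge0$ (think of a principal eigenfunction). Brezis's lemma, which you invoke via Kato, requires \emph{superlinear} absorption and so applies to $A=u^2+v^2$ and $B=-\ln(uv)$ in Section~2 but not to the linear system for the differences. What actually closes the starting step (Lemma \ref{monotinf}) is the energy/doubling argument of Lemma \ref{lemLR}, and for that one needs the full quadratic form in $(u_\lambda-u)^+$ and $(v-v_\lambda)^+$ to be \emph{strictly} negative on the intersection of their supports. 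This holds only for $\lambda$ in the asymptotic regime, where one uses not just the sign $\alpha uv-\omega\ge0$ but the quantitative facts that $\alpha uv-\omega$ and $\alpha u_\lambda v_\lambda-\omega$ tend to $0$ there and that the diagonal part contributes $-(a-b)^2<0$ (this is where $2\omega<\alpha$, i.e.\ $a\ne b$, enters). So the plane can only be started for $\lambda$ large, using the uniformity in \eqref{mainBC}, and cooperativity alone does not substitute for these estimates. Second, the continuation step cannot be finished by a ``narrow-region'' argument: $\Sigma_{\lambda_0+\delta}$ minus a compact set is still a half-space containing arbitrarily large balls in the $x'$-directions, so narrowness fails --- you half-acknowledge this (``still unbounded in $x'$'') but do not resolve it, and the Kato route is closed for the reason above. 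The paper instead argues by contradiction at $\tilde\lambda=\inf\Lambda$ via translation-compactness: a sequence of failure points with bounded $x_N$-coordinate, translation in $x'$, passage to a limiting solution, the strong maximum principle for the cooperative system (here the sign $\alpha\bar u\bar v-\omega\ge0$ is what you correctly isolated), and the Hopf lemma at the reflecting hyperplane. Without these two ingredients the argument does not close.
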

The proof is based on  the moving planes method, in a version developed by \cite{FSS}. We follow \cite{FSS}, nevertheless our system requires some major adjustments as we will point out.

For $\lambda \in \R$, we set
\[
u_{\lambda}(x',x_N):=u(x',2\lambda - x_N), \; v_{\lambda}(x',x_N):=v(x',2\lambda - x_N) \quad \text{and} \quad \Sigma_\la:= \{x_N > \la\}.
\]
We aim at proving that
\begin{equation}\label{moving thesis 0}
u_\la(x) \le u(x) \quad \text{and} \quad v_\la(x) \ge v(x) \quad \forall x \in \Sigma_\la, \ \forall \la \in \R.
\end{equation}
This and the strong Maximum Principle will yield Proposition \ref{propmonotxN}.

In order to prove that \eqref{moving thesis 0} is satisfied, we show that
\begin{equation}\label{moving thesis 1}
\Lambda:=\left\{ \la \in \R: \text{$u_\mu \le u$ and $v_\mu \ge v$ in $\Sigma_\mu$ for every $\mu \ge \la$}\right\}= \R.
\end{equation}

We will make great use of a lemma proved and used in \cite{FMS} to show that the positive part or negative part of some functions are identically zero.
\begin{lem}[{\cite[Lemma 2.1]{FMS}}]\label{lemLR}
Let $\theta >0$ and $\gamma>0$ such that $\theta < 2^{-\gamma}$.
Moreover let $R_0>0$, $C>0$ and
$$\mathcal{L}:(R_0, + \infty) \rightarrow \mathbb{R}$$ be a non-negative and non-decreasing function such that
\[
\begin{cases} \mathcal{L}(R)\leq \theta \mathcal{L}(2R)+G(R) & \forall R>R_0,\\ \mathcal{L}(R)\leq CR^{\gamma} & \forall R >R_0, \end{cases}
\]
where $G:(R_0, +\infty)\rightarrow \mathbb{R}^+$ is such that $$\lim_{R\rightarrow +\infty}G(R)=0 .$$ Then $$\mathcal{L}(R)=0 \qquad \forall R > R_0.$$
\end{lem}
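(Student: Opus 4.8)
The plan is to iterate the first inequality geometrically and to use the second (polynomial growth) bound only in order to discard the resulting remainder term. Fix $R > R_0$. Applying the inequality $\mathcal{L}(r) \le \theta\,\mathcal{L}(2r) + G(r)$ successively at $r = R, 2R, \dots, 2^{k-1}R$ — all of which lie in $(R_0,\infty)$ — and using $G \ge 0$, a straightforward induction on $k$ gives
\[
\mathcal{L}(R) \le \theta^k\, \mathcal{L}(2^k R) + \sum_{j=0}^{k-1} \theta^j\, G(2^j R) \qquad \text{for all } k \ge 1.
\]
By the growth bound, $\theta^k \mathcal{L}(2^k R) \le C R^\gamma\,(2^\gamma \theta)^k$, and it is precisely here that the hypothesis $\theta < 2^{-\gamma}$ is used: $2^\gamma\theta < 1$, so this term tends to $0$ as $k \to \infty$. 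Letting $k\to\infty$ we obtain
\[
\mathcal{L}(R) \le \sum_{j=0}^\infty \theta^j\, G(2^j R) \qquad \text{for all } R > R_0,
\]
the series converging because $\theta < 2^{-\gamma} < 1$ and $G(2^j R) \to 0$ as $j \to \infty$ (so the summands are bounded).

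Next I would show that the right-hand side above tends to $0$ as $R \to \infty$. Given $\varepsilon > 0$, choose $\bar R > R_0$ with $G(r) \le \varepsilon$ for all $r \ge \bar R$; then for $R \ge \bar R$ every argument $2^j R$ is $\ge \bar R$, so $\sum_{j\ge 0}\theta^j G(2^j R) \le \varepsilon/(1-\theta)$, whence $\mathcal{L}(R) \le \varepsilon/(1-\theta)$ for $R \ge \bar R$. As $\varepsilon$ is arbitrary, $\limsup_{R\to\infty}\mathcal{L}(R) = 0$, i.e. $\mathcal{L}(R) \to 0$ as $R\to\infty$. Finally, since $\mathcal{L}$ is non-negative and non-decreasing, for every $R > R_0$ we have $0 \le \mathcal{L}(R) \le \lim_{r\to\infty}\mathcal{L}(r) = 0$, so $\mathcal{L} \equiv 0$ on $(R_0,\infty)$.

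The argument is elementary, and I do not expect a genuine obstacle; the only points requiring a little care are (i) keeping the iteration legitimate — the points $2^j R$ at which the first inequality is invoked all remain in the domain $(R_0,\infty)$, and the sign condition $G \ge 0$ is what lets one drop the partial sums along the induction — and (ii) the fact that $G$ is assumed only to vanish at infinity, not to be globally bounded, which is why the uniform control on the series is obtained on a ray $[\bar R,\infty)$ rather than on all of $(R_0,\infty)$. The decisive use of the structural assumption is the strict inequality $\theta < 2^{-\gamma}$: it is exactly what forces $\theta^k\mathcal{L}(2^kR) \le C R^\gamma(2^\gamma\theta)^k \to 0$, and without it the remainder term would persist and the conclusion would fail.
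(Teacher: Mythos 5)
Your proof is correct; the paper itself does not reproduce a proof of this lemma (it simply cites \cite[Lemma 2.1]{FMS}), and your argument --- iterate the dyadic inequality, kill the remainder $\theta^k\mathcal{L}(2^kR)\le CR^\gamma(2^\gamma\theta)^k$ using $\theta<2^{-\gamma}$, control the tail series on a ray where $G$ is small, and then use monotonicity and non-negativity of $\mathcal{L}$ to propagate the vanishing down to all $R>R_0$ --- is exactly the standard one for such lemmas. All the delicate points (the iteration staying inside the domain, $G\ge 0$ allowing the partial sums to be monotone, and the need for monotonicity of $\mathcal{L}$ to handle $R$ near $R_0$ where the series bound alone is insufficient) are correctly handled.
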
 We can start the moving plane device and  prove that $\Lambda \neq \emptyset$:
\begin{lem}\label{monotinf}
There exists $\bar \lambda \in \R$ sufficiently large such that
\[
u\geq u_\lambda \quad\text{and}\quad v\leq v_\lambda \qquad \text{in}\,\,\Sigma_\lambda
\]
for any $\lambda\geq\bar\lambda$.
In other words, $\Lambda \supset [\bar\lambda,\infty)$.
\end{lem}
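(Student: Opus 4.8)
The plan is to run the classical "moving plane from infinity" step, exploiting the boundary conditions \eqref{mainBC} together with the bounds of Proposition \ref{propbounds} to get a sign on the relevant linearized operator near $x_N = +\infty$. First I would introduce the differences $\bar u_\lambda := u_\lambda - u$ and $\bar v_\lambda := v - v_\lambda$ on $\Sigma_\lambda$, so that the goal is $\bar u_\lambda \le 0$ and $\bar v_\lambda \le 0$ in $\Sigma_\lambda$. Using the mean value theorem, $\bar u_\lambda$ and $\bar v_\lambda$ satisfy a linear system
\[
\Delta \bar u_\lambda = c_{11}\bar u_\lambda + c_{12}\bar v_\lambda, \qquad \Delta \bar v_\lambda = c_{21}\bar u_\lambda + c_{22}\bar v_\lambda \quad \text{in } \Sigma_\lambda,
\]
where the coefficients $c_{ij}$ are bounded functions of $(u,v,u_\lambda,v_\lambda)$ computed from the partial derivatives of $\RHSA$ and $\RHSB$. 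The key point, which uses \eqref{keybound}–\eqref{boundab}, is that as $\lambda\to+\infty$ the arguments $(u,v)$ and $(u_\lambda,v_\lambda)$ are close to $(b,a)$ on $\Sigma_\lambda$ (by \eqref{mainBC} and monotone convergence to the endpoints), so the coefficients approach their values at $(b,a)$; there one checks that the diagonal coefficients are positive and, crucially, that the off-diagonal coupling has the sign making the system "cooperative in the right direction" for $(\bar u_\lambda, \bar v_\lambda)$, i.e. the system admits a positive supersolution and a form of the maximum principle on narrow/half-space-type domains.

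Concretely, I would proceed as follows. Fix a large $M$ so that on $\{x_N \ge M\}$ one has $(u,v)$ and, for $\lambda$ large, $(u_\lambda,v_\lambda)$ confined to a small neighbourhood of $(b,a)$ where $\partial_u\RHSA, \partial_v\RHSB < -\kappa < 0$ and the off-diagonal terms are controlled; this gives a strictly "dissipative" linear system to which a maximum principle applies on all of $\Sigma_\lambda \subset \{x_N \ge M\}$ once $\lambda \ge M$. One then shows $\bar u_\lambda, \bar v_\lambda \le 0$ there by a standard argument: consider $\sup_{\Sigma_\lambda}(\bar u_\lambda + \bar v_\lambda)^+$ or use the Lemma \ref{lemLR} truncation device with $\mathcal{L}(R) = \int_{\Sigma_\lambda \cap B_R}[(\bar u_\lambda)^+ + (\bar v_\lambda)^+]$-type quantities; the decay of $\bar u_\lambda, \bar v_\lambda \to 0$ as $x_N \to +\infty$ (uniformly in $x'$, from \eqref{mainBC}) furnishes the function $G(R)\to 0$. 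Since $\bar u_\lambda = \bar v_\lambda = 0$ on $\partial\Sigma_\lambda = \{x_N = \lambda\}$ (by definition of reflection) and the system has the right sign structure, the positive parts vanish, giving $u \ge u_\lambda$ and $v \le v_\lambda$ on $\Sigma_\lambda$ for all $\lambda \ge \bar\lambda := M$.

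The main obstacle is that, unlike in the cooperative case, the off-diagonal coefficients of the linearized system need not have a favourable sign for arbitrary $(u,v)$ in the admissible range \eqref{boundab}; it is only near the endpoint $(b,a)$ that one recovers a usable sign (or, more precisely, a usable sign on the combination $\bar u_\lambda + \bar v_\lambda$ or on a suitable weighted sum). So the delicate part is to quantify how close to $(b,a)$ one must be — i.e. to choose $M$ correctly — and to verify that, for that choice, the scalar quantity obtained by summing (or by an appropriate linear combination with positive weights reflecting the structure in \eqref{newsystem}) satisfies a single differential inequality $\Delta z \ge 0$ with $z = 0$ on $\{x_N=\lambda\}$ and $z \to 0$ at infinity, to which Lemma \ref{lemLR} or the maximum principle applies. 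I would also need to record the uniform exponential-type smallness of $\bar u_\lambda, \bar v_\lambda$ as $\lambda \to \infty$ (coming from \eqref{mainBC} plus interior elliptic estimates on the bounded solution) to guarantee the second hypothesis $\mathcal{L}(R) \le CR^\gamma$ of Lemma \ref{lemLR} and the vanishing of $G$; this is routine but must be stated. Once these ingredients are in place, the conclusion $\Lambda \supset [\bar\lambda,\infty)$ follows.
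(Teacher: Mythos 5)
Your overall architecture is the right one --- linearize via the mean value theorem, exploit closeness of the coefficients to their values at $(b,a)$, and close the argument with a cut-off/test-function device and Lemma \ref{lemLR} on $\Sigma_\lambda$ with zero data on $\{x_N=\lambda\}$ --- and this is indeed how the paper proceeds. But your central premise is false: for large $\lambda$ it is \emph{not} true that $(u_\lambda,v_\lambda)$ is close to $(b,a)$ on $\Sigma_\lambda$. Since $u_\lambda(x',x_N)=u(x',2\lambda-x_N)$ and $2\lambda-x_N\to-\infty$ as $x_N\to+\infty$ inside $\Sigma_\lambda$, the reflected pair tends to $(a,b)$ deep inside $\Sigma_\lambda$, no matter how large $\lambda$ is. Consequently the coefficient bounds you want (and the dissipativity/definiteness they are supposed to deliver) are simply unavailable on all of $\Sigma_\lambda$, so your ``choose $M$ large'' step cannot be carried out as described. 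The observation that rescues the argument --- and which your proposal is missing --- is that one only needs to control the coefficients on the supports of $(u_\lambda-u)^+$ and $(v-v_\lambda)^+$: there the one-sided bounds $u\le u_\lambda\le b$ and $a\le v_\lambda\le v$ from \eqref{boundab}, combined with $u\to b$ and $v\to a$ on $\Sigma_\lambda$ as $\lambda\to\infty$ via \eqref{mainBC}, force $u_\lambda\to b$ and $v_\lambda\to a$ as well. (A smaller point: with the paper's convention $\Delta u=\RHSA(u,v)$ one has $\partial_u\RHSA(b,a)=2b^2+\alpha a^2>0$, so your stated sign condition $\partial_u\RHSA<-\kappa<0$ is backwards, though the intended meaning is clear.)

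A second gap is that the quantitative heart of the lemma --- which you flag as ``the delicate part'' but do not carry out --- cannot be reduced to a single scalar inequality for $\bar u_\lambda+\bar v_\lambda$: vanishing of $(\bar u_\lambda+\bar v_\lambda)^+$ does not control the two positive parts separately, and the diagonal coefficients $2b^2+\alpha a^2$ and $2a^2+\alpha b^2$ of the linearization at $(b,a)$ are different, so no such decoupling is available. What is actually needed is the joint energy estimate for $X=(u_\lambda-u)^+$ and $Y=(v-v_\lambda)^+$, in which the Rabi contributions are absorbed by the pointwise inequality $2uv_\lambda XY-v_\lambda^2X^2-u^2Y^2\le 0$ together with the smallness of $\alpha uv-\omega$ and $\alpha u_\lambda v_\lambda-\omega$ on the intersection of the two supports (this uses $uv\ge\omega/\alpha$ from Proposition \ref{propbounds} and $ab=\omega/\alpha$), while the cubic terms produce the strictly negative contribution governed by $(a-b)^2>0$, i.e.\ precisely by \eqref{cond}. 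Until you identify where $\omega/\alpha<1/2$ enters the sign of this quadratic form, the argument does not close; note also that the second hypothesis of Lemma \ref{lemLR} only requires the global gradient bound $\mathcal{L}_\lambda(R)\le CR^N$, so the exponential smallness you propose to establish is not needed.
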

\begin{proof}
The pair $(u_\lambda,v_\lambda)$ solves
\begin{equation}\tag{$P_\lambda$}\label{syst lambda}
\begin{cases}
-\Delta u_\lambda
	=g(u_\lambda,v_\lambda)+v_\lambda (\omega-\alpha u_\lambda v_\lambda) \\
-\Delta v_\lambda
	= g(v_\lambda,u_\lambda)+u_\lambda (\omega-\alpha u_\lambda v_\lambda) \\
a<u_\lambda,v_\lambda<b
\end{cases}
\end{equation} where $g(u,v)=u(1-u^2-v^2)$ and where we have used \eqref{boundab}.

Let $\varphi_R$ be a standard $\mathcal{C}^1$ cut-off function on $\RR^N$ such that $\varphi_R=1$ in $B_R$, $\varphi_R=0$ outside $B_{2R}$ and $|\nabla\varphi_R|\leq 2/R$ on $\RR^N$. We subtract the equations for $u_\lambda$ and $u$, and multiply by the  test function
\begin{equation}\nonumber
(u_\lambda-u)^+\varphi_R^2 \, \mathds{1}_{ \Sigma_\lambda}.
\end{equation}
We find
\begin{equation}\label{estu}\int_{\Sigma_\lambda}\,|\nabla (u_\lambda-u)^+|^2\varphi_R^2\,=\,-2\int_{\Sigma_\lambda} \varphi_R (u_\lambda-u)^+ \, \nabla (u_\lambda-u)^+ \cdot \nabla\varphi_R+I_1+I_2,\end{equation}
where
\begin{equation}\label{I1}
I_1=\int_{\Sigma_\lambda}\big(g(u_\lambda,v_\lambda)-g(u,v)\big)(u_\lambda-u)^+\varphi_R^2,
\end{equation}
\begin{equation}\label{I2}
I_2=\int_{\Sigma_\lambda}\big(v_\lambda(\omega -\alpha u_\lambda v_\lambda)-v(\omega -\alpha uv) \big)(u_\lambda-u)^+\varphi_R^2\,.
\end{equation}
We proceed similarly by subtracting the equations for $v$ and $v_\lambda$  and multiplying by
$$
(v-v_\lambda)^+\varphi_R^2\, \mathds{1}_{ \Sigma_\lambda}\,
$$
and get
\begin{equation}\label{estv}
\int_{\Sigma_\lambda}\,|\nabla (v-v_\lambda)^+|^2\varphi_R^2\,=\,-2\int_{\Sigma_\lambda} \varphi_R (v-v_\lambda)^+ \, \nabla (v-v_\lambda)^+\cdot \nabla\varphi_R+I_3+I_4\end{equation}
where
\begin{equation}\label{I3}
I_3=\int_{\Sigma_\lambda}\big(g(v,u)-g(v_\lambda,u_\lambda)\big)(v-v_\lambda)^+\varphi_R^2,
\end{equation}
\begin{equation}\label{I4}
I_4=\int_{\Sigma_\lambda}\big( u (\omega - \alpha uv) - u_\lambda(\omega - \alpha u_\lambda v_\lambda) \big)(v-v_\lambda)^+\varphi_R^2.
\end{equation}
Let
\begin{align*}
{\mathcal L}_\lambda(R)\,
	&:=\, \int_{\Sigma_\lambda\cap B_R}\,\Big[|\nabla (u_\lambda-u)^+|^2\,+\,|\nabla (v-v_\lambda)^+|^2\Big]
,\\
\mathcal{J}_\lambda(R)\,
	&:=\, \int_{\Sigma_\lambda}\,\Big[\big( (u_\lambda-u)^+\big)^2+\big( (v-v_\lambda)^+\big)^2 \Big] {\varphi_R^2}.
\end{align*}
We deduce from \eqref{estu}-\eqref{estv} that for any $\theta \in (0,1)$,
\begin{equation}
{\mathcal L}_\lambda (R)\leq \theta {\mathcal L}_\lambda (2R)
+\frac{4}{\theta R^2} {\mathcal J}_\lambda(R)+I_1+I_2+I_3+I_4.
	\label{Eq:LR2R1234}
\end{equation}
Therefore, we will need to estimate $I_1$, $I_2$, $I_3$, $I_4$ in terms of
${\mathcal J}_\lambda(R)$ in order to be able to use  Lemma \ref{lemLR} and deduce that $\mathcal L_\lambda (R)\equiv 0$, which will imply the conclusion of the Lemma.

\noindent{\em Estimate of $I_2$ and $I_4$:} We deduce  from (\ref{I2}) that
\begin{equation}\label{I2new}
I_2=\int_{\Sigma_\lambda}\Big[(v- v_\lambda)(\alpha u (v+ v_\lambda)-\omega)(u_\lambda-u)^+ - \alpha v_\lambda^2\big ((u_\lambda-u)^+\big )^2 \Big]\varphi_R^2\,.
\end{equation}
We recall from Proposition \ref{propbounds} that $\alpha uv -\omega \geq 0$. So that in the first term in the square bracket on the right hand side of of \eqref{I2new}, we can keep only $(v-v_\lambda)^+$ in the upper bound and find
\begin{equation}\label{I2new2}
I_2\leq\int_{\Sigma_\lambda } \Big[(\alpha u v-\omega)(v-v_\lambda)^+(u_\lambda-u)^+ +\alpha u v_\lambda (v-v_\lambda)^+(u_\lambda-u)^+-\alpha v_\lambda^2\big ((u_\lambda-u)^+\big )^2\Big]\varphi_R^2 \,.
\end{equation}
A similar computation for $I_4$ yields
\begin{equation}\label{I4new2}
I_4\leq\int_{\Sigma_\lambda} \Big[(\alpha u_\lambda v_\lambda-\omega)(v-v_\lambda)^+(u_\lambda-u)^+ +\alpha u v_\lambda (v-v_\lambda)^+(u_\lambda-u)^+-\alpha u^2\big ((v-v_\lambda)^+\big )^2\Big]\varphi_R^2\,.
\end{equation}
We sum the two estimates \eqref{I2new2} and \eqref{I4new2}, use that
$$2uv_\lambda(v-v_\lambda)^+(u_\lambda-u)^+-v_\lambda^2\big ((u_\lambda-u)^+\big )^2-u^2\big ((v-v_\lambda)^+\big )^2\leq 0$$ to find
\begin{align}
I_2+I_4
	&\leq \int_{\Sigma_\lambda}\Big[(v-v_\lambda)^+(u_\lambda-u)^+
\big ( (\alpha u_\lambda v_\lambda-\omega)+ (\alpha u v -\omega)\big )\Big]{\varphi_R^2}.
\end{align}
Because of \eqref{mainBC}, for $\lambda$ large enough, in $\Sigma_\lambda$, $u$ tends to $b$, $v$ tends to $a$ and $uv$ tends to $ab=\omega/\alpha$. Moreover in the support of $(v-v_\lambda)^+$, $v\geq v_\lambda\geq a$, so $v_\lambda$ also tends to $a$, and in the support of $(u_\lambda-u)^+$, $b\geq u_\lambda\geq u$, so $u_\lambda$ tends to $b$. This implies that for $\lambda$ large enough, in $\Sigma_\lambda$,  $u_\lambda v_\lambda$ also tends to $ab=\omega/\alpha$.
 Therefore, for {some small} $\varepsilon > 0$ {which will be fixed later}, there exists $\bar \lambda$ large enough, so that for $\lambda\geq \bar \lambda$, in $\Sigma_\lambda\cap S$, where $S$ is the intersection of the supports of  $(u_\lambda-u)^+$ and $(v-v_\lambda)^+$, $$|(\alpha u_\lambda v_\lambda-\omega)+ (\alpha u v -\omega)|\leq \varepsilon $$ and
 \begin{equation}\label{I24}
 I_2+I_4\leq \frac \varepsilon 2\int_{\Sigma_\lambda}\Big[\big ( (v-v_\lambda)^+\big )^2+\big ( (u_\lambda-u)^+
\big )^2\Big] {\varphi_R^2}.
\end{equation}

\noindent{\em Estimate of $I_1$ and $I_3$:}  By the mean value theorem, there exist $\xi_1(x)\in (u(x),u_\lambda(x))$ and $\xi_2(x) \in (v_\lambda(x),v(x))$ such that
 \begin{equation}
 g(u_\lambda,v_\lambda)-g(u,v)=\frac {\partial g}{\partial u}(\xi_1,v_\lambda)(u_\lambda-u)+\frac {\partial g}{\partial v}(u,\xi_2)(v_\lambda-v).
 \end{equation}
 Note that, for $\lambda$ large, in $\Sigma_\lambda \cap S$, $u$ and $u_\lambda$ tend to $b$ and $v$ and $v_\lambda$ tend to $a$. Hence, as $g(u,v)=u(1-u^2-v^2)$,
  we find that
  $$\frac {\partial g}{\partial u}(\xi_1,v_\lambda) = 1-3\xi_1^2-v_\lambda^2\to -2b^2 \quad \hbox{ and } \quad \frac {\partial g}{\partial v}(u,\xi_2) = 2u\xi_2 \to 2ab.
  $$
Therefore, { in view of \eqref{I1} and by enlarging $\bar \lambda$ if necessary} we have for $\lambda \geq \bar\lambda$ that
 \begin{equation}\label{I1new2}
 I_1\leq  \int_{\Sigma_\lambda}
 \Big[(-2b^2+\varepsilon) \big ((u_\lambda-u)^+\big )^2+ (2ab+\varepsilon) (v-v_\lambda)^+(u_\lambda-u)^+ \Big]\varphi_R^2 \,.
 \end{equation}
 We argue similarly for $I_3$ and find
  \begin{align}
 I_1+I_3
 	&\leq  \int_{\Sigma_\lambda}(-(a-b)^2+2\varepsilon)\left [ \big ((u_\lambda-u)^+\big )^2+ \big (v-v_\lambda)^+\big)^2\right ] \varphi_R^2.
	\label{I13}
 \end{align}

Recall that $(a-b)^2$ is positive as soon as \eqref{cond} holds. {In the sequel, we assume that $(a-b)^2 > 3\varepsilon$. Inserting \eqref{I24} and \eqref{I13} into \eqref{Eq:LR2R1234}, we have for $\lambda>\bar \lambda$ and large $R$ that
\begin{equation}
{\mathcal L}_\lambda (R)
	\leq \theta{\mathcal L}_\lambda (2R)
		 + \Big(-(a-b)^2+ 3\varepsilon  \Big) \mathcal{J}_\lambda(R)
	\leq  \theta{\mathcal L}_\lambda (2R).
	\label{Eq:27VI19-B1}
\end{equation}
We are now able to apply Lemma \ref{lemLR}.  We have
 that $\mathcal L_\lambda(R)\leq C R^N$ since $|\nabla u|, |\nabla v| \in L^\infty(\mathbb{R}^N)$ by elliptic estimates and the $L^\infty$ bound of Lemma \ref{Lem:14V19-LB}.
 We fix $\theta:= 2^{-(N+1)}$ and some $\varepsilon > 0$ such that $(a - b)^2 - 3\varepsilon > 0$. Then \eqref{Eq:27VI19-B1} is in effect for $\lambda>\bar \lambda$ and large $R$} and Lemma \ref{lemLR} then yields that
\[
\mathcal L_\lambda (R)=0 \text{ for all $\lambda > \bar\lambda$ and large $R$}.
\]
Recalling that $u=u_\lambda$ and $v=v_\lambda$ on $\pa \Sigma_\lambda$, we reach the conclusion.
\end{proof}
We now have to prove that $\tilde \lambda:= \inf \Lambda$ (with $\Lambda$ defined in \eqref{moving thesis 1}) is  $-\infty$ to complete the proof of Proposition \ref{propmonotxN}:
\begin{lem}
We have $\tilde \lambda =-\infty$.
\end{lem}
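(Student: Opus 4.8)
The plan is to argue by contradiction, assuming $\tilde\lambda > -\infty$, and to show that one can move the plane slightly past $\tilde\lambda$, contradicting the definition of $\tilde\lambda$ as the infimum of $\Lambda$. By continuity (passing to the limit $\lambda\to\tilde\lambda^+$ in the inequalities defining $\Lambda$), we first record that $u_{\tilde\lambda}\le u$ and $v_{\tilde\lambda}\ge v$ in $\Sigma_{\tilde\lambda}$, so $\tilde\lambda\in\Lambda$. Since $(u,v)$ is not identically $(a,b)$ (it satisfies the boundary conditions \eqref{mainBC}), Proposition \ref{propbounds} gives the strict bounds \eqref{boundab}, and the strong maximum principle applied to $u-u_{\tilde\lambda}$ and $v_{\tilde\lambda}-v$ — using the cooperative/competitive sign structure visible in \eqref{syst lambda} together with $\alpha uv-\omega\ge 0$ — upgrades this to the strict inequalities $u_{\tilde\lambda} < u$ and $v_{\tilde\lambda} > v$ in the open set $\Sigma_{\tilde\lambda}$ (unless one of the differences vanishes identically, which is excluded by \eqref{mainBC} since the two asymptotic states are distinct).

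Next I would rerun the integral estimate \eqref{Eq:LR2R1234} from the proof of Lemma \ref{monotinf}, but now for $\lambda$ slightly below $\tilde\lambda$. The point is that the coefficients controlling $I_1+I_3$ and $I_2+I_4$ were good (i.e. made $-(a-b)^2+3\varepsilon<0$ appear) only because, near $x_N=+\infty$, $(u,v)$ is close to $(b,a)$ by \eqref{mainBC}. For $\lambda$ close to $\tilde\lambda$ this is no longer automatic on all of $\Sigma_\lambda$, so the strategy is a two-region split: fix a large constant $M$ so that on $\{x_N\ge M\}$ the boundary behaviour again makes the quadratic form strictly negative (exactly as in Lemma \ref{monotinf}), while on the slab $\{\tilde\lambda-\delta_0 \le x_N \le M\}$ one uses that the "bad" positive parts $(u_\lambda-u)^+$ and $(v-v_\lambda)^+$ are supported in a region where, at $\lambda=\tilde\lambda$, the strict inequalities from the previous paragraph hold; hence for $\delta_0$ small enough and $\lambda\in(\tilde\lambda-\delta_0,\tilde\lambda]$ these positive parts are supported in a set whose intersection with any fixed compact set has small measure (or is empty away from $\partial\Sigma_\lambda$), by uniform continuity of $u,v,u_\lambda,v_\lambda$ on compacts. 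One feeds this into $\mathcal J_\lambda(R)$: combined with a Poincaré-type inequality on the narrow region where the positive parts live, the bad contributions from the slab are absorbed into $\mathcal L_\lambda$ with small constant.

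Concretely, after the split one obtains $\mathcal L_\lambda(R)\le \theta\,\mathcal L_\lambda(2R) + G_\lambda(R) + (\text{small})\,\mathcal L_\lambda(2R)$ where $G_\lambda(R)\to 0$ as $R\to\infty$ (coming from the cut-off term $\tfrac{4}{\theta R^2}\mathcal J_\lambda(R)$ and the tail where the favourable sign holds), together with the a priori polynomial bound $\mathcal L_\lambda(R)\le CR^N$ from the global Lipschitz bound on $(u,v)$. Choosing $\theta=2^{-(N+1)}$ and $\delta_0$ small so the extra small constant is below $2^{-(N+1)}$ as well, Lemma \ref{lemLR} applies and forces $\mathcal L_\lambda(R)\equiv 0$, whence $u_\lambda\le u$, $v_\lambda\ge v$ in $\Sigma_\lambda$ for all $\lambda\in(\tilde\lambda-\delta_0,\tilde\lambda]$. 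This means $(\tilde\lambda-\delta_0,\infty)\subset\Lambda$, contradicting $\tilde\lambda=\inf\Lambda$. Therefore $\tilde\lambda=-\infty$.

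The main obstacle I anticipate is making the slab estimate quantitative: one must show that for $\lambda$ just below $\tilde\lambda$ the positive parts $(u_\lambda-u)^+,(v-v_\lambda)^+$ are confined to a thin neighbourhood of $\partial\Sigma_\lambda$ on compact sets, and then convert "thin support" into a genuinely small coefficient in front of $\mathcal L_\lambda(2R)$ uniformly in $R$ — this is where a Poincaré inequality on the thin set, or alternatively a careful use of the strong comparison on compacts combined with the boundary decay, must be invoked, and it is the step most sensitive to the non-cooperative structure of \eqref{mainsystem}. The cancellation identity $2uv_\lambda(v-v_\lambda)^+(u_\lambda-u)^+-v_\lambda^2((u_\lambda-u)^+)^2-u^2((v-v_\lambda)^+)^2\le 0$ from Lemma \ref{monotinf} will again be essential to close the cross terms.
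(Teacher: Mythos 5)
Your strategy diverges from the paper's at the decisive step, and the step you yourself flag as ``the main obstacle'' is a genuine gap rather than a technicality. You propose to rerun the integral estimate \eqref{Eq:LR2R1234} for $\lambda$ slightly below $\tilde\lambda$, splitting $\Sigma_\lambda$ into a far region $\{x_N\ge M\}$ and a slab, and to control the slab by showing that $(u_\lambda-u)^+$ and $(v-v_\lambda)^+$ are confined to a thin neighbourhood of $\partial\Sigma_\lambda$. But your justification for thinness --- the strict inequality $u_{\tilde\lambda}<u$ in $\Sigma_{\tilde\lambda}$ together with uniform continuity ``on compacts'' --- only controls the support inside fixed compact sets, whereas $\mathcal{L}_\lambda(R)$ and $\mathcal{J}_\lambda(R)$ are integrals over $\Sigma_\lambda\cap B_{2R}$ with $R\to\infty$, and the slab $\mathbb{R}^{N-1}\times[\lambda,M]$ is unbounded in $x'$. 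A pointwise strict inequality $u-u_{\tilde\lambda}>0$ on this unbounded set gives no positive lower bound uniform in $x'$ on $\mathbb{R}^{N-1}\times[\tilde\lambda+\delta,M]$; without such a uniform gap, the positive parts may, for every $\lambda<\tilde\lambda$, be supported arbitrarily deep inside the slab at large $|x'|$, and the Poincar\'e/thin-support absorption you invoke does not apply. Establishing the uniform gap requires precisely the translation--compactness argument you have omitted (translate by a sequence $(x^i)'$ along which the gap degenerates, extract a $\mathcal{C}^2_{\mathrm{loc}}$ limit, apply the strong maximum principle to the limit using the sign information from \eqref{keybound}, and contradict \eqref{mainBC}); as written, the proof does not close.

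The paper sidesteps this entirely: it never propagates the integral inequality below $\tilde\lambda$. Instead it assumes failure at points $x^i\in\Sigma_{\lambda_i}$ with $\lambda_i\to\tilde\lambda$ from below, shows $(x_N^i)$ is bounded using \eqref{mainBC}, translates in $x'$ and extracts a limit $(\bar u,\bar v)$, proves via the strong maximum principle that $\bar u-\bar u_{\tilde\lambda}>0$ in $\Sigma_{\tilde\lambda}$ so that the limiting touching point must lie on $\partial\Sigma_{\tilde\lambda}$, and then obtains a contradiction between the Hopf lemma (which forces $\pa_N\bar u(0',\tilde\lambda)>0$) and the mean-value estimate $\pa_N\bar u(0',\tilde\lambda)\le 0$ extracted from $u_{\lambda_i}(x^i)>u(x^i)$. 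If you wish to keep your integral-estimate route, you would still need this compactness step to make the slab estimate uniform in $x'$, at which point the paper's argument is the more economical one; the ingredients you do have (closedness of $\Lambda$, the sign of the coupling term from $\alpha uv\ge\omega$, and the cancellation of the cross terms) are correct but are not the hard part.
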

\begin{proof}
 Assume by contradiction that $\tilde \lambda$ is finite. Then, $\Lambda = [\tilde \lambda,+\infty)$, and  there exist sequences $(\lambda_i)$ with $ \lambda_i \in (-\infty,\tilde \lambda)$ and $(x^i)$ with $x^i \in \Sigma_{\la_i}$ such that $\la_i \to \tilde \lambda$ as $i \to \infty$, and at least one of the two holds:
\begin{subequations}
\begin{align}
& u_{\la_i}(x^i) > u(x^i)  \text{\, \, for every $i$, }\label{eq8} \hbox{ or }\\
& v_{\la_i}(x^i)<v(x^i) \text{ \,\, for every $i$}. \label{eq13}
\end{align}
\end{subequations}

Assume that \eqref{eq8} holds; the other case can be treated similarly. We claim that the sequence $(x_N^i) \subset \R$ is bounded. If not, as $x_N^i>\la_i$ and $\la_i$ is bounded, up to a subsequence $x_N^i \to +\infty$ as $i \to \infty$. It follows that $2 \la_i -x_N^i \to -\infty$, and in light of assumption \eqref{mainBC} we obtain
\[
\lim_{i \to \infty} u_{\la_i}(x^i) = \lim_{i \to \infty} u((x^i)',2\la_i -x_N^i)  = a \quad \text{and} \quad \lim_{i \to \infty} u(x^i) = b,
\]
in contradiction with \eqref{eq8} for $i$ sufficiently large. Hence the claim is proved and, up to a subsequence, $x_N^i \to \bar x_N$ as $i \to \infty$.

Let us set
\[
u^i(x):= u((x^i)'+x',x_N) \quad \text{and} \quad v^i(x):= v((x^i)'+x',x_N).
\]
Since $(u,v)$ is bounded (in view of \eqref{keybound}), by standard elliptic estimates $|\nabla^k u|, |\nabla^k v| \in L^\infty(\R^N)$, for $k = 1, 2, \ldots$
Thus, after extracting a subsequence if necessary, $(u^i,v^i)$ converges in $\mathcal{C}^2_{loc}(\R^N)$ to a limit $(\bar u, \bar v)$, still solution of \eqref{mainsystem}.

We wish to show that $\bar x_N=\tilde \lambda$. From  equation \eqref{eq8}, we obtain
\begin{equation}\label{eq9}
\begin{split}
\bar  u_{\tilde \lambda}(0',\bar x_N) &= \bar u(0',2{\tilde \lambda} - \bar x_N) = \lim_{i \to \infty} u( (x^i)', 2\la_i-x_N^i) \\
&= \lim_{i \to \infty} u_{\la_i}(x^i) \ge \lim_{i \to \infty} u(x^i) = \bar u (0',\bar x_N).
\end{split}
\end{equation}
On the other hand, we observe that $((x^i)'+x',x_N) \in \Sigma_{\tilde \lambda}$ whenever $(x',x_N) \in \Sigma_{\tilde \lambda}$, and by definition $u_{\tilde \lambda} \le u$ in $\Sigma_{\tilde \lambda}$. Consequently, by the convergence of $u^i$ to $\bar u$ we deduce that
\begin{align*}
\bar u_{\tilde \lambda}(x',x_N) &= \lim_{i \to \infty} u^i(x',2{\tilde \lambda} - x_N) = \lim_{i \to \infty} u((x^i)'+x',2 {\tilde \lambda}- x_N) \\
& \le \lim_{i \to \infty} u((x^i)'+x',x_N) = \lim_{i \to \infty} u^i(x',x_N) = \bar u(x',x_N)
\end{align*}
for every $(x',x_N) \in \Sigma_{\tilde \lambda}$. Analogously, as $v_{\tilde \lambda} \ge v$ in $\Sigma_{\tilde \lambda}$, we have $\bar v_{\tilde \lambda} \ge \bar v$ in $\Sigma_{\tilde \lambda}$.\\
Now
\begin{equation}
\left\{\begin{array}{ll}
-\Delta (\bar u - \bar u_{\tilde \lambda}) + c(x) (\bar u- \bar u_{\tilde \lambda})
	&=  (\bar v_{\tilde \lambda} - \bar v) (\alpha \bar u\bar v -\omega+\alpha \bar u_{\tilde \lambda} \bar v_{\tilde \lambda}+\bar u_{\tilde \lambda}(\bar v_{\tilde \lambda}+\bar v))\\
\bar u - \bar u_{\tilde \lambda} \ge 0 & \text{in $\Sigma_{\tilde \lambda}$} \\
\bar u - \bar u_{\tilde \lambda} = 0 & \text{on $\pa \Sigma_{\tilde \lambda}$},
\end{array}\right.
\label{eq12}
\end{equation}
with $c \in C^0(\Sigma_{\tilde \lambda})$ defined by
$$
c(x)= \alpha \bar v(x)\bar v_{\tilde \lambda}(x)+ \bar v^2(x) + \bar u^2(x) + \bar u \bar u_{\tilde \lambda}(x)  +  \bar u_{\tilde \lambda}^2(x) -1 .$$
Because of \eqref{keybound}, the right hand side on the first line of \eqref{eq12} is nonnegative,
hence, the strong Maximum Principle  implies that necessarily $\bar u- \bar u_{\tilde \lambda} > 0$ in $\Sigma_{\tilde \lambda}$, and a comparison with \eqref{eq9} reveals that $\bar x_N={\tilde \lambda}$, as desired.

At this point we are ready to reach a contradiction. On the one hand, by the absurd assumption \eqref{eq8}
\[
0< u_{\la_i}(x^i) - u(x^i) = u^i(0', 2\la_i -x^i_N)-u^i(0',x_N) = 2\pa_N u^i (0', \xi^i)(\la_i-x^i_N) \qquad \forall i,
\]
for some $\xi^i \in (2\lambda_i -x_N^i,x_N^i)$. As $\la_i < x_N^i$ for every $i$ this implies $\pa_N u^i(x', \xi^i_N)< 0$ for every $i$, and passing to the limit we infer that
\begin{equation}\label{eq11}
\pa_N \bar u(0', {\tilde \lambda}) \le 0,
\end{equation}
where we used the fact that $\la_i \le \xi^i \le x_N^i$ with $\la_i,x_N^i \to {\tilde \lambda}$.

On the other hand, thanks to  \eqref{eq12} and the fact that $\bar u-\bar u_{\tilde \lambda}>0$ in $\Sigma_{\tilde \lambda}$, the Hopf Lemma implies that
\[
-2\pa_N \bar u(0',{\tilde \lambda}) =\pa_{-e_N} (\bar u(0',{\tilde \lambda}) -\bar u_{\tilde \lambda}(0',{\tilde \lambda}))<0,
\]
in contradiction with \eqref{eq11}.

The above argument establishes that \eqref{eq8} cannot occur. With minor changes, we can show that also \eqref{eq13} cannot be verified, and in conclusion $\tilde \lambda$ cannot be finite.
\end{proof}

\begin{proof}[Proof of Proposition \ref{propmonotxN}]
By \eqref{moving thesis 1}, we directly deduce that $\pa_N u \ge 0$ and $\pa_N v \le 0$ in $\R^N$. Since
\[
\begin{cases}
-\Delta (\pa_N u) + (3 u^2  + (\alpha +1) v^2 -1) \pa_N u = (\omega -2 (\alpha +1) uv) \pa_N v \ge 0 & \text{in $\R^N$} \\
-\Delta (\pa_N v) + (3 v^2  + (\alpha +1) u^2 -1) \pa_N v = (\omega -2 (\alpha +1) uv) \pa_N u \le 0 & \text{in $\R^N$},
\end{cases}
\]
the strict inequality follows by the strong Maximum Principle.
\end{proof}

\subsection{One-dimensional symmetry}\label{sec: sym}

We extend the monotonicity in $x_N$ to  all the directions of the open upper hemisphere $\mathbb{S}^{N-1}_+:=\left\{ \nu \in \mathbb{S}^{N-1}: \langle \nu, e_N \rangle >0 \right\}$. We follow the structure of proof in \cite{FSS}, introduced in \cite{Fa99} and in \cite{FaSo}, though the specificity of our system requires new estimates.

\begin{prop}\label{prop: symmetry}
For every $\nu \in \mathbb{S}^{N-1}_+$, we have
\[
\pa_\nu u >0 \quad \text{and} \quad \pa_\nu v<0 \qquad \text{in $\R^N$}.
\]
In particular, $u$ and $v$ depend only on $x_N$.
\end{prop}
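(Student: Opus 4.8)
The plan is to deduce monotonicity in every direction $\nu \in \mathbb{S}^{N-1}_+$ from the already-established monotonicity in the $e_N$ direction (Proposition \ref{propmonotxN}) by a continuity/connectedness argument in the space of directions, exactly in the spirit of \cite{Fa99,FaSo,FSS}. Concretely, fix $\nu \in \mathbb{S}^{N-1}_+$ and, for $t \in \R$, set $u_{\nu,t}(x) := u(x - 2(\langle x,\nu\rangle - t)\nu)$ and $v_{\nu,t}$ analogously, i.e. the reflection of $(u,v)$ across the hyperplane $\{\langle x,\nu\rangle = t\}$, with $\Sigma_{\nu,t} := \{\langle x,\nu\rangle > t\}$. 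I would show that $u_{\nu,t} \le u$ and $v_{\nu,t} \ge v$ in $\Sigma_{\nu,t}$ for all $t \in \R$. Since $e_N$ already satisfies this (that is the content of \eqref{moving thesis 1} in the previous subsection, reinterpreted for the $e_N$-hyperplanes), and since the set of $\nu \in \mathbb{S}^{N-1}_+$ for which the full moving-plane conclusion holds will be shown to be both open and closed in the (connected) open hemisphere, one concludes it holds for all $\nu \in \mathbb{S}^{N-1}_+$. Differentiating $u_{\nu,t} \le u$ at $t = \langle x,\nu\rangle$ gives $\pa_\nu u \ge 0$; the strict sign then follows from the strong maximum principle applied to the linearized system, just as in the proof of Proposition \ref{propmonotxN}. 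Finally, $\pa_\nu u > 0$ and $\pa_{-\nu}u \le 0$ would be contradictory unless $\pa_\nu u \equiv 0$ for the boundary directions; pushing $\nu \to \pm e_1, \dots, \pm e_{N-1}$ forces $\pa_j u = \pa_j v = 0$ for $j = 1,\dots,N-1$, i.e. $u,v$ depend on $x_N$ only.

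The key steps, in order: (1) set up the reflected problem $(P_{\nu,t})$ and record that, thanks to \eqref{boundab} and $\alpha uv \ge \omega$ from Proposition \ref{propbounds}, the reflected pair solves a system of the structural form \eqref{newsystem}; (2) re-run the energy/cut-off estimate of Lemma \ref{monotinf}: test the difference equations with $(u_{\nu,t}-u)^+\varphi_R^2\mathds{1}_{\Sigma_{\nu,t}}$ and $(v-v_{\nu,t})^+\varphi_R^2\mathds{1}_{\Sigma_{\nu,t}}$, combine the cross terms using the same algebraic inequality $2uv_{\nu,t}(v-v_{\nu,t})^+(u_{\nu,t}-u)^+ - v_{\nu,t}^2((u_{\nu,t}-u)^+)^2 - u^2((v-v_{\nu,t})^+)^2 \le 0$, and arrive at $\mathcal L_{\nu,t}(R) \le \theta \mathcal L_{\nu,t}(2R) + (\text{l.o.t.})$ so that Lemma \ref{lemLR} applies — this establishes that the moving-plane inequality, once true for some $t$, propagates; (3) prove the set $\Lambda_\nu := \{t : u_{\nu,s}\le u, v_{\nu,s}\ge v \text{ in }\Sigma_{\nu,s}\ \forall s \ge t\}$ equals $\R$ for $\nu$ in an open set of directions around any direction where it is already known, using the $\mathcal{C}^2_{loc}$ continuity of the solution under translations and the Hopf-lemma argument of the second lemma to rule out $\inf \Lambda_\nu$ being finite; (4) pass to the limit in $\nu$ (closedness) and extract the one-dimensional conclusion.

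The main obstacle is Step (2)/(3): the behaviour at infinity along a general direction $\nu \ne e_N$. The boundary condition \eqref{mainBC} gives uniform limits only as $x_N \to \pm\infty$; along a tilted hyperplane the "good" asymptotics ($u,u_{\nu,t}\to b$, $v,v_{\nu,t}\to a$, $uv, u_{\nu,t}v_{\nu,t}\to \omega/\alpha$) that made $I_1+I_3 \le (-(a-b)^2 + 2\varepsilon)(\cdots)$ and $I_2 + I_4$ small in Lemma \ref{monotinf} are no longer automatic. This is where the "new estimates" alluded to in the introduction are needed: one must either exploit the already-proven monotonicity in $x_N$ to control the solution on the relevant unbounded region, or replace the "start the moving planes from infinity" step by a direct deformation argument from $\nu = e_N$ where, for $\nu$ close to $e_N$, the half-space $\Sigma_{\nu,t}$ is a small perturbation of $\Sigma_{e_N, t'}$ and the estimates of Lemma \ref{monotinf} degrade continuously. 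Handling the limiting (boundary) directions $\nu \to e_j$ — where $(a-b)^2$ no longer helps because the hyperplane becomes "vertical" — is the subtle point, and is exactly why one only gets monotonicity in the open hemisphere and must then argue by the antisymmetry $\pa_\nu u = -\pa_{-\nu}u_{\text{refl}}$ to kill the transverse derivatives.
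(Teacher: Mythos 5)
Your overall architecture (start from $e_N$, propagate to an open set of directions, close up by a connectedness argument in $\S^{N-1}_+$, then kill the transverse derivatives by letting $\nu$ tend to the equator) matches the paper's, and you have correctly located the difficulty: the uniform limits in \eqref{mainBC} are only available as $x_N\to\pm\infty$, so the asymptotics that drive the estimates of Lemma \ref{monotinf} are not available on a tilted half-space $\Sigma_{\nu,t}=\{\langle x,\nu\rangle>t\}$. But having located the obstruction, you do not overcome it: your steps (2)--(3) propose to re-run the reflection argument across tilted hyperplanes and then offer only an ``either\dots or\dots'' for how to salvage the estimates, with neither branch carried out. The first branch fails as stated: deep inside $\Sigma_{\nu,t}$ the reflected point $x-2(\langle x,\nu\rangle-t)\nu$ need not have $x_N\to-\infty$ (this happens only in a subcone), so the convergences $u_{\nu,t}v_{\nu,t}\to\omega/\alpha$, $u_{\nu,t}\to a$, etc., and hence the smallness of the analogues of $I_2+I_4$, cannot be obtained uniformly on $\Sigma_{\nu,t}$; ``starting the plane from infinity'' in a tilted direction is precisely what cannot be done here. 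This is a genuine gap in the central step.

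The paper's resolution is different in kind: it never reflects across tilted hyperplanes. It works directly with the directional derivatives $(u_\nu,v_\nu)$, which solve the linearized system \eqref{linearized}, and splits $\R^N$ into the slab $|x_N|\le\sigma$ and the two half-spaces $\{\pm x_N>\sigma\}$ --- half-spaces defined by $x_N$, where \eqref{mainBC} does give uniform asymptotics. On the slab, a compactness argument yields $\partial_N u\ge\eps$, $\partial_N v\le-\eps$, and the Lipschitz continuity of $\nu\mapsto\partial_\nu u$ transfers the strict sign to all $\nu$ near $e_N$ (Lemma \ref{lem: step 1 2}). On $\{x_N>\sigma\}$ one tests \eqref{linearized} against $u_\nu^-\varphi_R^2$ and $v_\nu^+\varphi_R^2$, and the decisive new estimate is the positivity of the determinant
\[
(2b^2+\alpha a^2)(2a^2+\alpha b^2)-\bigl(2(\alpha+1)ab-\omega\bigr)^2=2\alpha\Bigl(1-\tfrac{4\omega^2}{\alpha^2}\Bigr)>0,
\]
which makes the relevant quadratic form in $(|u_\nu^-|,|v_\nu^+|)$ nonnegative for $\sigma$ large and lets Lemma \ref{lemLR} conclude $u_\nu^-\equiv v_\nu^+\equiv0$ there. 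Your proposal neither identifies this quadratic-form condition (the algebraic inequality you plan to reuse from Lemma \ref{monotinf} concerns the reflected differences, not the derivatives) nor supplies a substitute, so the core of the argument is missing.
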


We divide the proof into several steps.

\begin{lem}\label{lem: step 1 2}
Let $\sigma >0$ be arbitrarily chosen. There exists an open neighborhood $\mathcal{O}_{e_N}$ of $e_N$ in $\mathbb{S}^{N-1}$ such that
\[
\frac{\pa u}{\pa \nu}  (x) > 0 \quad \text{and} \quad \frac{\pa v}{\pa \nu}(x) <0 \quad \forall x \in \overline{S_\s}, \ \forall \nu \in \mathcal{O}_{e_N},
\]
where $S_\sigma:= \R^{N-1} \times (-\sigma,\sigma)$.
\end{lem}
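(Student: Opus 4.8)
The plan is to reduce the statement to a \emph{uniform} positivity estimate for $\partial_N u$ and for $-\partial_N v$ on the slab $\overline{S_\sigma}$, and then obtain the neighborhood $\mathcal{O}_{e_N}$ by a perturbation of the direction. Concretely, I would first prove that
\[
\delta_\sigma := \inf_{\overline{S_\sigma}} \partial_N u > 0 \qquad\text{and}\qquad \delta_\sigma' := \inf_{\overline{S_\sigma}} (-\partial_N v) > 0 .
\]
Granting this, recall that $|\nabla u|,|\nabla v| \le M$ on $\R^N$ for some finite $M$, by Lemma~\ref{Lem:14V19-LB} together with interior elliptic estimates. For $\nu \in \mathbb{S}^{N-1}$ one has, at every $x\in\overline{S_\sigma}$,
\[
\partial_\nu u(x) = \partial_N u(x) + (\nu - e_N)\cdot\nabla u(x) \ge \delta_\sigma - M\,|\nu - e_N|,
\]
and likewise $-\partial_\nu v(x) \ge \delta_\sigma' - M\,|\nu - e_N|$. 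Hence the open neighborhood $\mathcal{O}_{e_N} := \{\nu \in \mathbb{S}^{N-1} : |\nu - e_N| < M^{-1}\min(\delta_\sigma,\delta_\sigma')\}$ has the required property.

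It remains to establish the uniform positivity, and for this I would argue by contradiction with a translation-compactness argument. Suppose $\partial_N u(x^k) \to 0$ for some sequence $x^k = ((x^k)', x_N^k) \in \overline{S_\sigma}$ (the case $\partial_N v(x^k) \to 0$ is handled identically using the equation for $\partial_N v$). Translate in the transverse variable: set $u^k(x) := u((x^k)' + x', x_N)$ and $v^k(x) := v((x^k)' + x', x_N)$. Each $(u^k,v^k)$ solves \eqref{mainsystem}, and since the limits in \eqref{mainBC} are \emph{uniform in $x'$}, it also satisfies \eqref{mainBC}. By Proposition~\ref{propbounds} the functions $(u^k,v^k)$ are uniformly bounded, so by elliptic estimates they are bounded in $\mathcal{C}^2_{loc}(\R^N)$ (indeed in $\mathcal{C}^k_{loc}$ for every $k$), and along a subsequence $(u^k,v^k) \to (\bar u,\bar v)$ in $\mathcal{C}^2_{loc}(\R^N)$, where $(\bar u,\bar v)$ is again a solution of \eqref{mainsystem}--\eqref{mainBC}. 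Passing to a further subsequence, $x_N^k \to \bar x_N \in [-\sigma,\sigma]$, and by $\mathcal{C}^1_{loc}$ convergence
\[
\partial_N \bar u(0',\bar x_N) = \lim_{k\to\infty} \partial_N u^k(0',x_N^k) = \lim_{k\to\infty} \partial_N u(x^k) = 0 .
\]
But Proposition~\ref{propmonotxN} applies to $(\bar u,\bar v)$ and yields $\partial_N \bar u > 0$ everywhere in $\R^N$, a contradiction. This proves $\delta_\sigma > 0$; the bound $\delta_\sigma' > 0$ follows in the same way, and Lemma~\ref{lem: step 1 2} is established.

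The delicate point, and the one I expect to require the most care, is that the transverse translations must preserve the asymptotic conditions \eqref{mainBC}: this is exactly where the uniformity of the limits in $x'$ enters, and it is what legitimizes re-applying Proposition~\ref{propmonotxN} to the limiting solution $(\bar u,\bar v)$. The restriction to the slab $\overline{S_\sigma}$ is likewise essential: it guarantees that the heights $x_N^k$ stay in a compact set after translation, so that no information is lost in the limit. Dropping it would force a separate analysis of the regime $x_N^k \to \pm\infty$, which is instead treated in the subsequent steps of the proof of Proposition~\ref{prop: symmetry}.
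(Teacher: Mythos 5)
Your proposal is correct and follows essentially the same route as the paper: a uniform lower bound for $\partial_N u$ and $-\partial_N v$ on the slab obtained by a translation--compactness contradiction argument (relying on the uniformity in $x'$ of \eqref{mainBC} and on the monotonicity already established for the limiting solution), followed by a perturbation in the direction $\nu$ using the global gradient bound. The only cosmetic differences are that you translate only in the transverse variable and invoke Proposition \ref{propmonotxN} directly for the limit, where the paper translates by the full $x^i$ and re-runs the strong maximum principle; both are valid.
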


\begin{proof}
Let $\sigma>0$ be arbitrarily chosen. Firstly, we claim that there exists $\eps=\eps(\sigma)>0$ such that
\begin{equation}\label{th step 1}
\pa_N u(x) \ge \eps \quad \text{and} \quad \pa_N v(x) \le -\eps \quad \forall x \in \overline{S}_\sigma.
\end{equation}
By contradiction, assume that there exists a sequence  $(x^i)$, with   $ x^i \in S_\sigma$, such that at least one of the two following equalities  holds :
\begin{subequations}
\begin{align}
& \lim_{i \to +\infty} \pa_N u(x^i)= 0 \label{eq14} \\
\hbox{or }& \lim_{i \to +\infty} \pa_N v(x^i)= 0 \label{eq15}.
\end{align}
\end{subequations}
We define
\[
u^i(x):= u(x+x^i) \quad \text{and} \quad v^i(x):= v(x+x^i).
\]
The sequence $\{(u^i,v^i)\}$ is uniformly bounded in $W^{1,\infty}(\R^N)$, and hence by elliptic regularity $(u^i,v^i) \to (\bar u,\bar v)$ in $\mathcal{C}^2_{\rm loc}(\R^N)$ up to a subsequence, where $(\bar u, \bar v)$ is still a solution to \eqref{mainsystem}-\eqref{mainBC}, which also satisfies $\pa_N \bar u \geq 0$ and $\pa_N \bar v \leq 0$ on $ \RR^N$. The strong maximum principle and the condition at infinity \eqref{mainBC} then imply that $\pa_N \bar u > 0$ and $\pa_N \bar v < 0$ on $ \RR^N$, and this contradicts \eqref{eq14} or
\eqref{eq15}. This completes the proof of claim \eqref{th step 1}.

Now we claim that
\begin{equation}\label{th step 2}
\text{The map $\nu \mapsto (\pa_\nu u, \pa_\nu v)$ is in $\mathcal{C}^{0,1} \left(\mathbb{S}^{N-1}, \left(\mathcal{C}^0(\R^N)\right)^2\right)$}.
\end{equation}
This is a simple consequence of the globlal Lipschitz continuity of $(u,v)$, which implies that
\[
\left|\frac{\pa u}{\pa \nu_1}(x) -\frac{\pa u}{\pa \nu_2}(x) \right| + \left|\frac{\pa v}{\pa \nu_1}(x) -\frac{\pa v}{\pa \nu_2}(x) \right| \le 2 C |\nu_1-\nu_2|
\]
for every $x \in \R^N$.

Combining \eqref{th step 1} and \eqref{th step 2}, the conclusion follows.
\end{proof}

\begin{lem}\label{lem: step 3} The function
$u$ is strictly increasing and $v$ is strictly decreasing with respect to all  unit vectors in an open neighborhood of $e_N$ in $\mathbb{S}^{N-1}$.
\end{lem}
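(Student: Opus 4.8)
\noindent\emph{Strategy of proof.} The plan is to propagate the slab monotonicity of Lemma~\ref{lem: step 1 2} to all of $\R^N$ by means of a maximum principle for the linearised system, the decisive point being that the linearisations of \eqref{mainsystem} at the asymptotic states $(b,a)$ and $(a,b)$ are coercive, which is precisely where \eqref{cond} enters. Fix $\sigma>0$, to be chosen large below, and let $\mathcal{O}_{e_N}$ be the neighbourhood provided by Lemma~\ref{lem: step 1 2} for this value of $\sigma$. For $\nu\in\mathcal{O}_{e_N}$ put $\phi:=\pa_\nu u$ and $\psi:=-\pa_\nu v$. Differentiating \eqref{mainsystem} in the direction $\nu$, exactly as in the proof of Proposition~\ref{propmonotxN}, one gets the linear system
\[
-\Delta\phi + c_1\,\phi = d\,\psi,\qquad -\Delta\psi + c_2\,\psi = d\,\phi\qquad\text{in }\R^N,
\]
with $c_1 = 3u^2+(\alpha+1)v^2-1$, $c_2 = 3v^2+(\alpha+1)u^2-1$ and $d = 2(\alpha+1)uv-\omega$. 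By Proposition~\ref{propbounds} we have $d>0$, so the system is cooperative, the coefficients are globally bounded, and $\phi,\psi>0$ on $\overline{S_\sigma}$ by Lemma~\ref{lem: step 1 2}.

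Next I would analyse the regime $|x_N|\to\infty$. Since $(u,v)\to(b,a)$ as $x_N\to+\infty$ and $(u,v)\to(a,b)$ as $x_N\to-\infty$, uniformly in $x'$, interior elliptic estimates give $\nabla u,\nabla v\to0$, hence $\phi,\psi\to0$, as $x_N\to\pm\infty$; moreover $c_1,c_2,d$ converge to the constants obtained by substituting $(b,a)$, respectively $(a,b)$. In both limits the associated symmetric matrix $M^{\infty}=\left(\begin{smallmatrix}c_1^{\infty}&-d^{\infty}\\ -d^{\infty}&c_2^{\infty}\end{smallmatrix}\right)$ has trace $c_1^{\infty}+c_2^{\infty}=\alpha+2>0$, and a short computation using $a^2+b^2=1$ and $ab=\omega/\alpha$ (namely $c_1^{\infty}c_2^{\infty}=(2-\alpha)^2\omega^2/\alpha^2+2\alpha$ and $(d^{\infty})^2=(\alpha+2)^2\omega^2/\alpha^2$) gives determinant $c_1^{\infty}c_2^{\infty}-(d^{\infty})^2=2\alpha\bigl(1-4\omega^2/\alpha^2\bigr)$, which is strictly positive by \eqref{cond}. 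Hence $M^{\infty}$ is positive definite, and, for $\sigma$ large enough, the system on each half-space $\Omega^{\pm}:=\{\pm x_N>\sigma\}$ possesses a positive constant strict supersolution vector $(\eta_1,\eta_2)$.

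With this in hand I would run the cooperative maximum principle on $\Omega^{+}$ and $\Omega^{-}$: if $\mu:=\min\bigl(\inf_{\Omega^{+}}\phi/\eta_1,\ \inf_{\Omega^{+}}\psi/\eta_2\bigr)$ were negative, then, using $\phi,\psi>0$ on $\{x_N=\sigma\}$ and $\phi,\psi\to0$ at $+\infty$, it would be attained at an interior point; subtracting the subsolution $\mu(\eta_1,\eta_2)$ and invoking the strong maximum principle for cooperative systems would force, say, $\phi\equiv\mu\eta_1<0$ in $\Omega^{+}$, contradicting the decay at $+\infty$. Therefore $\phi,\psi\ge0$ in $\Omega^{+}$, and likewise in $\Omega^{-}$; combined with $\phi,\psi>0$ on $\overline{S_\sigma}$ this yields $\pa_\nu u\ge0$ and $\pa_\nu v\le0$ on all of $\R^N$. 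Finally, since $\pa_\nu u\ge0$ solves $-\Delta(\pa_\nu u)+c_1\pa_\nu u=d(-\pa_\nu v)\ge0$ and is strictly positive on $\overline{S_\sigma}$, the strong maximum principle gives $\pa_\nu u>0$ in $\R^N$, and similarly $\pa_\nu v<0$; openness of $\mathcal{O}_{e_N}$ then yields the lemma.

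The step I expect to be the main obstacle is the coercivity at infinity together with the accompanying half-space maximum principle: it is exactly here that hypothesis \eqref{cond} (equivalently $a\neq b$) is indispensable, via $\det M^{\infty}=2\alpha(1-4\omega^2/\alpha^2)>0$. This is consistent with Theorem~\ref{maintheo2}, where for $\omega/\alpha\ge\tfrac12$ no monotone transition exists, so the argument must break down. A minor point to keep track of is that $\sigma$ must be fixed large enough for the far-field estimates \emph{before} Lemma~\ref{lem: step 1 2} is applied, so that $\mathcal{O}_{e_N}$ is adapted to the same slab.
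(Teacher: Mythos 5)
Your argument reaches the correct conclusion by a genuinely different route from the paper's. The paper proves $u_\nu\ge 0$ and $v_\nu\le 0$ on the half-spaces $\{\pm x_N>\sigma\}$ by an integral method: it tests the linearised system \eqref{linearized} against $u_\nu^-\varphi_R^2$ and $v_\nu^+\varphi_R^2$ and feeds the resulting inequality into the doubling Lemma \ref{lemLR}, the decisive algebraic fact being that the quadratic form with diagonal coefficients $2b^2+\alpha a^2$, $2a^2+\alpha b^2$ and cross term $2(\alpha+1)ab-\omega$ is positive definite, i.e. $(2b^2+\alpha a^2)(2a^2+\alpha b^2)-(2(\alpha+1)ab-\omega)^2=2\alpha(1-4\omega^2/\alpha^2)>0$. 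You use exactly the same determinant, but read it as saying that the limit matrix $M^\infty$ is a non-singular M-matrix, so that $\eta:=(M^\infty)^{-1}(1,1)^T>0$ furnishes a positive constant strict supersolution of the cooperative system satisfied by $(\pa_\nu u,-\pa_\nu v)$ on the far half-spaces (your observation that $d=2(\alpha+1)uv-\omega\ge\omega(\alpha+2)/\alpha>0$ by Proposition \ref{propbounds}, so that the system is cooperative, is correct and is also what the paper uses). A pointwise maximum-principle argument then replaces the integral estimates. Both proofs hinge on the same inequality, which is where \eqref{cond} enters; your version makes the M-matrix structure explicit and needs no cut-off bookkeeping, while the paper's version avoids the attainment issue discussed below.

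There is one step in your write-up that fails as stated: the claim that a negative infimum $\mu=\min\bigl(\inf_{\Omega^{+}}\phi/\eta_1,\ \inf_{\Omega^{+}}\psi/\eta_2\bigr)$ ``would be attained at an interior point''. The decay you invoke, $\phi,\psi\to 0$, holds only as $x_N\to+\infty$; a minimising sequence may escape to infinity in the $x'$ directions while remaining in a bounded range of $x_N$, where nothing forces $\phi$ or $\psi$ to tend to $0$, so the infimum need not be attained. This is the classical obstruction to maximum principles in unbounded domains, and it is precisely why the paper opts for the integral route. The gap is repairable by the translation--compactness device the paper uses elsewhere (e.g.\ in the proof that $\tilde\lambda=-\infty$): translate $(u,v)$ by $(x^k)'$ along a minimising sequence, extract a $\mathcal{C}^2_{\rm loc}$ limit $(\bar u,\bar v)$, still a solution of \eqref{mainsystem}--\eqref{mainBC} whose directional derivatives inherit $\pa_\nu\bar u\ge\eps/2>0$ on $\{x_N=\sigma\}$ from the uniform lower bound built into the proof of Lemma \ref{lem: step 1 2}, and obtain an interior touching point for the limit functions; there the strict supersolution property of $\eta$ yields a pointwise contradiction (one does not even need the strong maximum principle for systems at this stage). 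With that repair your proof is complete; the remaining steps, including the final upgrade from $\ge$ to $>$ via the strong maximum principle applied to \eqref{linearized}, are correct.
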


\begin{proof}
Firstly, we write down the equations satisfied by the directional derivatives $ u_\nu= \pa_\nu u$ and $v_\nu=\pa_\nu v$:
\begin{equation}\label{linearized}
\begin{cases}
-\Delta u_\nu +  u_\nu (3u^2+(\alpha+1)v^2-1)+ v_\nu (2(\alpha+1)uv-\omega)=0 \\
-\Delta v_\nu +  v_\nu (3v^2+(\alpha+1)u^2-1)+ u_\nu (2(\alpha+1)uv-\omega)=0
\end{cases} \text{in $\R^N$}.
\end{equation}

Fix some $\sigma > 0$ for the moment and let $\mathcal{O}_{e_N}$ be the neighborhood of $e_N$ given by Lemma \ref{lem: step 1 2}. {We will show that $u_\nu \ge 0$ and $v_\nu \le 0$ for all $\nu \in \mathcal{O}_{e_N}$ by applying Lemma \ref{lemLR} to the quantity
\[
\mathcal{I}_R := \int_{\mathcal{C}_R} \big[|\nabla u_\nu^-|^2 + |\nabla v_\nu^+|^2\big],
\]
where $\mathcal{C}_{R}:= \Sigma_\sigma \cap B_{R}$ and $\Sigma_\sigma :=\{x_N > \sigma\}$. The conclusion then follows from the strong maximum principle.
}

We test the first equation in \eqref{linearized} with $u_\nu^- \varphi_R^2$ where $\varphi_R$ is chosen exactly as in Lemma \ref{monotinf}. Using the bounds \eqref{keybound} and the fact that $u_\nu \ge 0$ on $\{x_N = \sigma\}$ (due to Lemma \ref{lem: step 1 2}), we obtain
\begin{align*}
\int_{\mathcal{C}_R} |\nabla u_\nu^-|^2 & \le -2 \int_{\mathcal{C}_{2R}} u_\nu^- \varphi_R \nabla u_\nu^- \cdot \nabla \varphi_R  \\
&\quad - \int_{\mathcal{C}_{2R}} (3u^2+(\alpha+1) v^2-1) (u_\nu^- \varphi_R)^2  + \int_{\mathcal{C}_{2R}} \varphi_R^2 (2(\alpha+1) uv -\omega) u_\nu^- v_\nu^+ \\
& \le \theta \int_{\mathcal{C}_{2R}} |\nabla u_\nu^-|^2 + \int_{\mathcal{C}_{2R}} (u_\nu^- \varphi_R)^2\left( \frac{4}{\theta R^2} + \sup_{\Sigma_\s} (-3u^2-(\alpha+1) v^2+1) \right)  \\
&\quad + \int_{\mathcal{C}_{2R}} \varphi_R^2 (2(\alpha+1) uv -\omega) u_\nu^- v_\nu^+ ,
\end{align*}
where $0<\theta<2^{-N}$. In a similar way, we find for $v_\nu^+$
\begin{align*}
\int_{\mathcal{C}_R} |\nabla v_\nu^+|^2 & \le \theta \int_{\mathcal{C}_{2R}} |\nabla v_\nu^+|^2 +  \int_{\mathcal{C}_{2R}} (v_\nu^+ \varphi_R)^2 \left( \frac{4}{\theta R^2} + \sup_{\Sigma_\s} (-3v^2-(\alpha+1)u^2+1) \right) \\
&\quad +  \int_{\mathcal{C}_{2R}}\varphi_R^2 (2(\alpha+1) uv -\omega) u_\nu^- v_\nu^+ .
\end{align*}
We notice that if $\s>0$ is sufficiently large, since $u$ tends to $b$ and $v$ tends to $a$ for $x_N$ large, then, in $\Sigma_\s$,
$$2(\alpha+1) uv -\omega \to 2(\alpha+1) ab -\omega,\ 3u^2+(\alpha+1) v^2-1\to 2b^2+\alpha a^2,$$
$$\hbox{ and }
3v^2+(\alpha+1)u^2-1\to 2a^2+\alpha b^2.$$
{Thus, for any small $\delta > 0$, we can choose $\sigma$ and $R$ sufficiently large so that
\begin{multline}
\mathcal{I}_R
	\leq \theta \mathcal{I}_{2R} - \int_{\mathcal{C}_{2R}} \Big[(2b^2+\alpha a^2 - \delta) (u_\nu^-)^2 + (2a^2+\alpha b^2 - \delta)(v_\nu^+)^2 \\
		 - 2(2(\alpha+1) ab -\omega + \delta)|u_\nu^-||v_\nu^+| \Big]\varphi_R^2.
	\label{Eq:16VII19-E1}
\end{multline}
We point out that
$$(2b^2+\alpha a^2 - \delta) (u_\nu^-)^2 + (2a^2+\alpha b^2 - \delta)(v_\nu^+)^2\geq 2
 \sqrt {(2b^2+\alpha a^2 - \delta)(2a^2+\alpha b^2 - \delta)}|u_\nu^-||v_\nu^+|.$$
and that, by \eqref{eqab} and \eqref{cond},
\begin{multline*}
(2b^2+\alpha a^2)(2a^2+\alpha b^2)  - (2(\alpha+1) ab -\omega)^2\\
	= 2\alpha (a^2 + b^2)^2  - 3\alpha(\alpha + 4) a^2b^2  + 4\omega(\alpha + 1) ab - \omega^2
	= 2\alpha\Big(1 - \frac{4\omega^2}{\alpha^2}\Big) > 0.
\end{multline*}
Hence, by choosing first small $\delta$ and then large $\sigma$ from the start, we have for all sufficiently large $R$ that the integral on the right hand side of \eqref{Eq:16VII19-E1} is non-negative.
}
As a consequence, we infer that
\begin{equation}
\mathcal{I}_R
 \le \theta \mathcal{I}_{2R} \text{ for all $R$ sufficiently large}.
  \end{equation}
We can now apply  Lemma \ref{lemLR} to find that $\mathcal{I}_R = 0$ for all large $R$. It follows that $u_\nu \ge 0$ and $v_\nu \le 0$ in $\Sigma_\s=\{x_N > \sigma\}$. Arguing exactly in the same way, we can show that the same conditions are satisfied in $\{x_N <-\sigma\}$. By Lemma \ref{lem: step 1 2}, we deduce that $u_\nu \ge 0$ and $v_\nu \le 0$ in $\R^N$ for every $\nu \in \mathcal{O}_{e_N}$, with both $u_\nu \not \equiv 0$ and $v_\nu \not \equiv 0$. {In view of \eqref{keybound} and \eqref{linearized}, the conclusion follows from the strong maximum principle.}
\end{proof}

\begin{proof}[Proof of Proposition \ref{prop: symmetry}]
Here we can essentially apply the same argument used in step 4 of Proposition 6.1 in \cite{FaSo}. We report the details for completeness. Let $\Omega$ be the set of the directions $\nu \in \S^{N-1}_+$ for which there exists an open neighborhood $\mathcal{O}_\nu \subset \S^{N-1}_+$ of $\nu$ such that
\[
\pa_\mu u>0 \quad \text{and} \quad \pa_\mu v<0 \quad \text{in $\R^N$}, \ \forall \mu \in \mathcal{O}_\nu.
\]
The set $\Omega$ is open by definition, and contains $e_N$ by Lemma \ref{lem: step 3}. Since $\S^{N-1}_+$ is arc-connected, if we can show that $\pa \Omega \cap \S^{N-1}_+=\emptyset$, then we conclude that $\Omega=\S^{N-1}_+$, as desired. Thus, let us suppose by contradiction that $\bar \nu \in \pa \Omega \cap \S^{N-1}_+$ (notice in particular that $\langle e_N, \bar \nu\rangle >0$). By definition, there exists $(\nu_n) \subset \Omega$ such that $\nu_n \to \bar \nu$. As
\[
\pa_{\nu_n} u >0 \quad \text{and} \quad \pa_{\nu_n} v<0 \quad \text{in $\R^N$}, \ \forall n,
\]
by continuity
 \[
\pa_{\bar \nu} u  \ge 0 \quad \text{and} \quad \pa_{\bar \nu} v\le 0 \quad \text{in $\R^N$}.
\]
By the strong maximum principle, recalling that $(u_{\bar \nu}, v_{\bar \nu})$ solves \eqref{linearized}, either $u_{\bar \nu} \equiv 0$ or $u_{\bar \nu}>0$ in $\R^N$, and analogously either $v_{\bar \nu} \equiv 0$ or $v_{\bar \nu}<0$ in $\R^N$. The alternatives $u_{\bar \nu}  \equiv 0$ and $v_{\bar \nu} \equiv 0$ are in contradiction with assumption \eqref{mainBC}, since $\bar \nu$ is not orthogonal to $e_N$, and hence
\begin{equation}\label{2431}
\pa_{\bar \nu} u>0\quad \text{and} \quad  \pa_{\bar \nu} v<0 \qquad \text{in $\R^N$}.
\end{equation}
Having established \eqref{2431}, it is possible to adapt the same proof of Lemmas \ref{lem: step 1 2} and \ref{lem: step 3}, with $\bar \nu$ instead of $e_N$, to deduce that $u_\nu >0$ and $v_\nu<0$ in $\R^N$ in all the directions of an open neighborhood $\mathcal{O}_{\bar \nu}$ of $\bar \nu$ in $\mathbb{S}^{N-1}_+$. Thus, we have that $\bar \nu \in \Omega \cap \pa \Omega$, in contradiction with the openness of $\Omega$.  This shows that $\pa \Omega \cap \S^{N-1}_+=\emptyset$ which, as already observed, implies $\Omega = \S^{N-1}_+$.

Finally, the fact that $\Omega = \S^{N-1}_+$ implies that both $\partial_\tau u\equiv 0$ and $\partial_\tau v\equiv 0$ for every $\tau \in \mathbb{S}^{N-1}$ orthogonal to $e_N$, which proves the last assertion.
\end{proof}

%


\section{Existence and uniqueness of positive $1D$ solutions when $2\omega < \alpha$. Proof of Theorem \ref{maintheo}.}

In this section, we assume that $0 < \omega < \frac{1}{2}\alpha$ unless otherwise stated. By Proposition \ref{prop: symmetry}, positive solutions to \eqref{mainsystem}-\eqref{mainBC} depend only on $x_N$ and are monotone. To conclude the proof of Theorem \ref{maintheo}, it remains to prove the uniqueness up to translations of such one-dimensional solutions.

We are led to consider on $\RR$ the system
\begin{equation}
\left\{\begin{array}{ccc}\vone''
	&=& \RHSA(\vone, \vtwo),\\
\vtwo''
	&=& \RHSB(\vone, \vtwo),
	\end{array}\right.
	\label{Eq:Osys}
\end{equation}
subject to
\begin{equation}
\text{$(\vone, \vtwo) \rightarrow (a,b)$ at $-\infty$ and $(\vone, \vtwo) \rightarrow (b,a)$ at $\infty$.}
	\label{Eq:26VI19-OBC}
\end{equation}

The main result of this section is:
\begin{proposition}\label{Thm:EUMODE}
Suppose that $0 < \omega < \frac{1}{2}\alpha$. Then there exist positive solutions to \eqref{Eq:Osys}-\eqref{Eq:26VI19-OBC}, and these solutions are translations of one another, i.e. if $(\vone, \vtwo)$ and $(\bar \vone, \bar \vtwo)$ both satisfy \eqref{Eq:Osys}-\eqref{Eq:26VI19-OBC}, then there is a constant $T$ such that
\[
\bar \vone(x) = \vone(x + T) \text{ and } \bar \vtwo(x) = \vtwo(x + T).
\]
Furthermore, $\vone' > 0$ and $\vtwo' < 0$ in $\RR$.
\end{proposition}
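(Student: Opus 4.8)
The plan is to treat existence and uniqueness separately, relying throughout on Proposition~\ref{propbounds}. For existence I would exploit the variational structure: \eqref{Eq:Osys} is the Euler--Lagrange system of the renormalized energy
\[
\mathcal{E}[\vone,\vtwo] = \int \Big[\tfrac12 (\vone')^2 + \tfrac12 (\vtwo')^2 + \tfrac14\big(\vone^2 + \vtwo^2 - 1\big)^2 + \tfrac{\alpha}{2}\big(\vone\vtwo - \tfrac{\omega}{\alpha}\big)^2\Big],
\]
whose integrand is nonnegative and, in the positive quadrant, vanishes exactly at $(a,b)$ and $(b,a)$ --- it is here that the strictness in \eqref{cond} matters, since $\omega/\alpha=\tfrac12$ would add $(c,c)$ to the zero set. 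Minimizing $\mathcal{E}$ over $(-L,L)$ with Dirichlet data $(\vone,\vtwo)(-L)=(a,b)$, $(\vone,\vtwo)(L)=(b,a)$ produces positive smooth solutions $(\vone_L,\vtwo_L)$ of \eqref{Eq:Osys}; an $L^\infty$ bound uniform in $L$ follows as in Lemma~\ref{Lem:14V19-LB} applied to $\vone_L+\vtwo_L$, and $\mathcal{E}[\vone_L,\vtwo_L]\le E_0$ uniformly, by comparison with a fixed finite-energy profile interpolating between $(a,b)$ and $(b,a)$. After translating each $(\vone_L,\vtwo_L)$ so that $\vone_L(0)=\tfrac{a+b}{2}$, a subsequence converges in $C^2_{\mathrm{loc}}(\RR)$ to a non-constant, finite-energy solution $(\vone,\vtwo)$ of \eqref{Eq:Osys}; Proposition~\ref{propbounds} gives \eqref{keybound}, and finiteness of the energy together with the explicit form of the integrand forces $(\vone,\vtwo)(x)$ to tend to $(a,b)$ or $(b,a)$ at each end, hence (after possibly reflecting $x\mapsto -x$) \eqref{Eq:26VI19-OBC}.

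Before uniqueness I would record two facts. First, the monotonicity $\vone'>0$, $\vtwo'<0$ is automatic: any solution of \eqref{Eq:Osys}--\eqref{Eq:26VI19-OBC} is a one-dimensional solution of \eqref{mainsystem}--\eqref{mainBC}, so Proposition~\ref{propmonotxN} applies. Second, every solution decays exponentially to its limits with a definite rate. Indeed, linearizing \eqref{Eq:Osys} at $(a,b)$ (resp. $(b,a)$) gives the constant-coefficient system $\Phi''=J\Phi$ with $J$ the Jacobian of $(\RHSA,\RHSB)$, a symmetric matrix with strictly positive entries that is positive definite precisely under \eqref{cond} --- this is the computation $(2b^2+\alpha a^2)(2a^2+\alpha b^2)-(2(\alpha+1)ab-\omega)^2 = 2\alpha\big(1-\tfrac{4\omega^2}{\alpha^2}\big)>0$ already appearing in Lemma~\ref{lem: step 3}. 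By Perron--Frobenius the eigenvector of $J$ for the larger eigenvalue has same-signed components, while the eigenvector for the smaller eigenvalue $\lambda$ has opposite-signed components; since monotonicity forces $(\vone-a,\vtwo-b)$ to have opposite-signed components as $x\to-\infty$, the heteroclinic must enter $(a,b)$ along the slow eigendirection, and so $\vone-a$ and $b-\vtwo$ are both comparable to $e^{\mu x}$ as $x\to-\infty$ with $\mu=\sqrt{\lambda}$, with an analogous statement at $+\infty$.

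For uniqueness I would run the sliding method. Let $(\vone,\vtwo)$ and $(\bar\vone,\bar\vtwo)$ solve \eqref{Eq:Osys}--\eqref{Eq:26VI19-OBC}; both obey the strict bounds \eqref{boundab}, so \eqref{mainsystem} takes the form \eqref{newsystem} with $f_1=1-\vone^2-\vtwo^2>0$ and $f_2=\omega-\alpha\vone\vtwo<0$, each symmetric and decreasing in both arguments. Writing $\bar\vone^t(x)=\bar\vone(x+t)$, $\bar\vtwo^t(x)=\bar\vtwo(x+t)$, $z_1=\bar\vone^t-\vone$, $z_2=\vtwo-\bar\vtwo^t$, a mean-value expansion that uses the monotonicity of $f_1,f_2$ shows that $(z_1,z_2)$ solves a linear system $-z_i''=\sum_j c_{ij}z_j$ whose off-diagonal coefficients satisfy $c_{12},c_{21}\ge0$ (cooperative structure) and whose coefficient matrix converges to $-J$ as $x\to\pm\infty$; consequently a maximum principle holds for $(z_1,z_2)$ in exterior regions $\{|x|>M\}$ (with a positive supersolution built from the Perron eigenvector of $J$). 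Using the two-sided exponential estimates above, one checks that $z_1\ge0$ and $z_2\ge0$ on $\RR$ for all sufficiently large $t$, whereas the ordering fails for $t$ very negative; let $t_0$ be the infimum of the (closed) set of admissible $t$. If $\bar\vone^{t_0}\not\equiv\vone$, the strong maximum principle for cooperative systems gives $z_1,z_2>0$ on $\RR$ at $t=t_0$; then $z_1,z_2\ge c_0>0$ on $[-M,M]$ for large $M$, so by continuity in $t$ the same holds at $t=t_0-\delta$, while on $\{|x|>M\}$ the exterior maximum principle (positive data at $|x|=M$, $z_i\to0$ at infinity) keeps $z_1,z_2\ge0$ --- contradicting the minimality of $t_0$. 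Hence $\bar\vone^{t_0}\equiv\vone$, and then the equations force $\bar\vtwo^{t_0}\equiv\vtwo$, i.e. $(\bar\vone,\bar\vtwo)(\cdot)=(\vone(\cdot-t_0),\vtwo(\cdot-t_0))$, which is the asserted translation.

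The step I expect to be the main obstacle is the tail analysis behind the sliding: establishing the exact exponential rate of convergence to $(a,b)$ and $(b,a)$ (which rests on hyperbolicity, i.e. on \eqref{cond}, together with the monotonicity ruling out the fast eigendirection) and pairing it with the correct maximum principle for the structurally cooperative but sign-indefinite difference system $(z_1,z_2)$, so that an ordering valid at $t_0$ can be pushed slightly past $t_0$. The remaining ingredients --- the finite-interval minimization and $C^2_{\mathrm{loc}}$ compactness, the mean-value expansion, and the strong maximum principle --- are routine once the structural bounds of Proposition~\ref{propbounds} are available.
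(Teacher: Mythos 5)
Your proposal is correct in outline and shares the paper's overall architecture (finite-interval approximation for existence, exponential tail analysis at the hyperbolic equilibria, and a sliding argument powered by \eqref{keybound}--\eqref{boundab} for uniqueness), but it executes the two delicate steps by genuinely different means, so a comparison is worthwhile. For the limits at $\pm\infty$ of the blown-up finite-interval solutions, you use a uniform energy bound and the fact that the potential $\frac14(1-\vone^2-\vtwo^2)^2+\frac{\alpha}{2}(\vone\vtwo-\frac{\omega}{\alpha})^2$ vanishes in the positive quadrant only at $(a,b)$ and $(b,a)$; the paper instead exploits the conserved Hamiltonian $h_n\ge 0$, must then dispose of the extra critical point $(c,c)$ of $(\RHSA,\RHSB)$ (harmless for you, since it is not a zero of the potential), and uses a Hopf-lemma argument to exclude a finite left endpoint --- your route is cleaner provided the uniform energy bound is supplied, which your comparison profile does. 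For the contradiction at the minimal shift $t_0$, the paper pins down the leading asymptotic coefficients via Lemma \ref{Lem:HypExpDecay.Pos} ($\ell_2=\mu\ell_1>0$), argues that minimality forces equality of a coefficient at one end, and invokes the Coddington--Levinson correspondence to conclude the two solutions coincide; you instead run the classical Berestycki--Nirenberg scheme --- strict ordering on a compact core plus an exterior maximum principle for the cooperative difference system, with the constant supersolution given by the positive eigenvector $(1,\mu)$ of the linearization (which works precisely because $\mathbf{A}(1,\mu)^T=\lambda_-^2(1,\mu)^T>0$ under \eqref{cond}). This is more elementary and avoids the correspondence theorem, and the cooperativity you need is exactly the paper's computation that the cross-coefficients equal $2(1+\alpha)\xi_1\xi_2-\omega>0$ thanks to $\vone\vtwo\ge\frac{\omega}{\alpha}$. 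The one soft spot is your derivation of the sharp decay rate: the eigenvector-sign argument correctly rules out a nonzero leading term along the fast (sign-changing) direction, but you must still exclude decay faster than $e^{-\lambda_+|x|}$ before concluding that the slow coefficient is nonzero; this requires the rate dichotomy for orbits on the stable manifold (the paper cites \cite[Chapter 13, Theorem 4.3]{CoddingtonLevinson}, or alternatively proves the explicit lower bound $b-\vone\ge C^{-1}e^{-b\sqrt2 x}$ of Lemma \ref{Lem:wv1As}). With that reference added, your argument is complete.
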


\begin{proof}[Proof of Theorem \ref{maintheo}]
The result is a consequence of Propositions \ref{prop: symmetry} and \ref{Thm:EUMODE}.
\end{proof}

The proof of the `uniqueness' part in Proposition \ref{Thm:EUMODE} uses the sliding method (cf. \cite{BCN97,BN90,BNsliding}) with the help of the bounds \eqref{keybound} as well as the following lemma on the asymptotic behavior of solutions.

Let
\begin{align}
\lambda_\pm
	&:= \sqrt{\frac{1}{2} \Big(  (\alpha + 2) \pm \sqrt{(\alpha - 2)^2 + \frac{32\omega^2}{\alpha}}\Big)},
	\label{Eq:04II19-A1}\\
\mu
	&:= \frac{2\omega (\alpha + 2)}{\alpha\Big( (\alpha - 2) \sqrt{1 - \frac{4\omega^2}{\alpha^2} } + \sqrt{(\alpha-2)^2  + \frac{32\omega^2}{\alpha} } \Big)} > 0,
	\label{Eq:mudef}
\end{align}
which are related to the eigenvalues and eigenvectors of the linearized operator associated with \eqref{Eq:Osys} near the critical point $(a,b)$.
 We refer to Appendix \ref{App:Asym} for a brief discussion on the origin of these constants.

\begin{lemma}\label{Lem:HypExpDecay.Pos}
Suppose that $0 < \omega < \frac{1}{2}\alpha$. Let $(\vone, \vtwo)$ be a positive solution of \eqref{Eq:Osys}-\eqref{Eq:26VI19-OBC} and $\lambda_-$ and $\mu$ be defined by \eqref{Eq:04II19-A1}-\eqref{Eq:mudef}. Then the limits
\begin{equation}
\ell_1 := \lim_{x \rightarrow \infty} (b - \vone(x))e^{\lambda_- x}\text{ and } \ell_2 := \lim_{x \rightarrow \infty} (\vtwo(x) - a)e^{\lambda_- x} \text{ exist,}
	\label{Eq:06II19-elEx}
\end{equation}
and satisfy
\begin{equation}
\ell_2 = \mu \ell_1 > 0.
	\label{Eq:06II19-elRel}
\end{equation}
\end{lemma}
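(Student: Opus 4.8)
The plan is to set up the problem as a linear asymptotic analysis of the perturbation $(\phi_1, \phi_2) := (b - \vone, \vtwo - a)$ around the equilibrium $(a,b)$, and to extract the precise exponential rate $\lambda_-$ and the ratio $\mu$ from the spectral decomposition of the linearized operator. First I would record that, by Proposition~\ref{prop: symmetry} (or rather the monotonicity built into the $1D$ statement of Proposition~\ref{Thm:EUMODE}), we have $\vone' > 0$ and $\vtwo' < 0$, so $\phi_1, \phi_2 > 0$ on $\RR$ and both tend to $0$ as $x \to \infty$; moreover, by the bounds \eqref{keybound}--\eqref{boundab}, $(\vone,\vtwo)$ stays in a compact set on which $\RHSA,\RHSB$ are smooth. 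Linearizing \eqref{Eq:Osys} at $(a,b)$, the pair $(\phi_1,\phi_2)$ satisfies $\Phi'' = M\Phi + N(\Phi)$ where $M = \begin{pmatrix} 2b^2 + \alpha a^2 & -(2(\alpha+1)ab - \omega) \\ -(2(\alpha+1)ab-\omega) & 2a^2 + \alpha b^2\end{pmatrix}$ (this uses $a^2 + b^2 = 1$ and $ab = \omega/\alpha$ to simplify the entries, exactly as in the computation preceding \eqref{Eq:16VII19-E1}), and $N(\Phi) = O(|\Phi|^2)$. The eigenvalues of $M$ are $\lambda_+^2 > \lambda_-^2 > 0$ — positivity of both being precisely where \eqref{cond} enters, via the determinant computation $2\alpha(1 - 4\omega^2/\alpha^2) > 0$ already displayed in the text — and $\lambda_- < \lambda_+$ are as in \eqref{Eq:04II19-A1}. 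The eigenvector for $\lambda_-^2$ can be normalized to $(1, \mu)$ with $\mu > 0$ as in \eqref{Eq:mudef}; I would verify this is the correct component ratio by a direct (but routine) computation.

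Next I would prove the existence of the limits \eqref{Eq:06II19-elEx} and the relation \eqref{Eq:06II19-elRel} by a standard stable-manifold / variation-of-constants argument. Diagonalize $M = P\,\mathrm{diag}(\lambda_-^2,\lambda_+^2)\,P^{-1}$ and write $\Psi = P^{-1}\Phi = (\psi_-, \psi_+)$, so $\psi_\pm'' = \lambda_\pm^2 \psi_\pm + (\text{quadratic remainder})$. Since $\Phi \to 0$ and we already know (or can first establish by a soft argument — e.g. comparison with the linear equation, or the fact that a bounded solution of $\psi'' = \lambda^2\psi + o(1)\cdot$ decaying terms on $[0,\infty)$ must itself decay) that $\Phi$ decays, the unbounded modes $e^{\lambda_\pm x}$ are absent, and each $\psi_\pm(x) = e^{-\lambda_\pm x}\big(c_\pm + o(1)\big)$ as $x \to \infty$ by the integral-equation fixed-point formulation on $[X,\infty)$ for $X$ large. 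Concretely, $\psi_-(x) e^{\lambda_- x}$ converges to a limit $c_-$, and $\psi_+(x) e^{\lambda_- x} = e^{(\lambda_- - \lambda_+)x}(\cdots) \to 0$. Transforming back, $\Phi(x) e^{\lambda_- x} \to c_-\,(1,\mu)^{\mathsf T}$, which gives \eqref{Eq:06II19-elEx} with $\ell_1 = c_-$, $\ell_2 = \mu c_-$, hence \eqref{Eq:06II19-elRel} once we know $\ell_1 > 0$.

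It remains to show $\ell_1 > 0$, equivalently $c_- \neq 0$ (it cannot be negative since $\phi_1 > 0$ would force $c_- \geq 0$, and $c_- = 0$ would mean $\Phi$ decays at the faster rate $\lambda_+$). Suppose $c_- = 0$; then $\Phi(x) = O(e^{-\lambda_+ x})$. I would derive a contradiction with positivity. The cleanest route: if $\psi_-$ decays faster than $e^{-\lambda_- x}$, run the integral equation for $\psi_-$ again to get $\psi_-(x) = e^{-\lambda_+ x}(\tilde c + o(1))$, i.e.\ $\Phi$ and in particular $\psi_-$ decay at rate $\lambda_+$; but the $\psi_-$-equation $\psi_-'' = \lambda_-^2 \psi_- + (\text{quadratic})$ has its two homogeneous rates $\pm\lambda_-$, and a nontrivial solution decaying strictly faster than $e^{-\lambda_- x}$ is impossible unless $\psi_- \equiv 0$ near $+\infty$, forcing (by unique continuation for the ODE system, or by the monotonicity $\phi_1 > 0$) $\Phi \equiv 0$, contradicting \eqref{Eq:26VI19-OBC}. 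This is the step I expect to require the most care — pinning down that the slow mode genuinely has a nonzero coefficient — but it is a well-understood phenomenon and the monotonicity/positivity of $(\phi_1,\phi_2)$, together with the sign of $\mu$, makes it tractable (one can also argue via a sliding/comparison with an explicit supersolution $K e^{-\lambda_- x}(1,\mu)$). The details of the fixed-point estimate on the quadratic remainder and of the eigenvector normalization are routine and deferred to Appendix~\ref{App:Asym}.
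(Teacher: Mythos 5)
Your setup and your proof of \eqref{Eq:06II19-elEx} together with the relation $\ell_2=\mu\ell_1$ follow essentially the same route as the paper: pass to the eigenbasis $(1,\mu)$, $(-\mu,1)$ of the linearization at $(b,a)$ and extract the coefficient of the slow mode $e^{-\lambda_- x}$ by a stable-manifold/variation-of-constants argument (the paper quotes Coddington--Levinson, Ch.~13, Theorems 4.3 and 4.5 for precisely this). One inaccuracy worth flagging even here: the fast component need not satisfy $\psi_+=e^{-\lambda_+x}(c_++o(1))$, because its equation is forced by the quadratic remainder $O(\psi_-^2)=O(e^{-2\lambda_-x})$, which dominates $e^{-\lambda_+x}$ whenever $2\lambda_-<\lambda_+$; since $2\lambda_->\lambda_-$ you still get $\psi_+e^{\lambda_-x}\to0$, which is all you use, so this part of the argument survives (the paper's statement $Y=O(e^{-(\lambda_-+\delta)x})$ is the correct careful version).

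The genuine gap is in the positivity $\ell_1>0$. Your main argument is: if $c_-=0$ then $\psi_-$ decays faster than $e^{-\lambda_-x}$, and a nontrivial solution of $\psi_-''=\lambda_-^2\psi_-+(\text{quadratic})$ decaying strictly faster than $e^{-\lambda_-x}$ must vanish identically near $+\infty$. That dichotomy holds for the homogeneous linear equation $\psi''=\lambda_-^2\psi$, but not for the coupled nonlinear system: when $c_-=0$ the slow component is slaved to the quadratic forcing and generically decays like $e^{-\min(2\lambda_-,\lambda_+)x}$ without vanishing -- this is exactly what happens on the strong stable manifold, which contains genuinely nontrivial orbits. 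So no contradiction is reached, and the fallback ``unique continuation or monotonicity forces $\Phi\equiv0$'' does not follow either. What is actually needed, and what your proposal is missing, is an a priori \emph{lower} bound on the decay of $b-\vone$ that is incompatible with the faster rate. The paper supplies this as Lemma \ref{Lem:wv1As}: comparing $\vone$ with the explicit profile $b\tanh(\tfrac{b}{\sqrt2}(x-x_0))$, using the differential inequality $\vone''\geq \vone(\vone^2-b^2)$ which follows from \eqref{keybound}, gives $b-\vone(x)\geq C^{-1}e^{-b\sqrt2\,x}$; since $b\sqrt2<\lambda_+$, this contradicts the rate-$\lambda_+$ decay that the Stable Manifold Theorem forces once $\ell_1=\ell_2=0$. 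Your parenthetical suggestion of a comparison with a supersolution $Ke^{-\lambda_-x}(1,\mu)$ points in the wrong direction (a supersolution yields an upper bound, whereas a lower bound is required). A workable alternative along the lines you hint at would exploit that the fast eigenvector $(-\mu,1)$ has components of opposite signs, which is incompatible with $b-\vone>0$ and $\vtwo-a>0$ at leading order, combined with the one-to-one correspondence between nonlinear stable orbits and their linear parts; but as written the decisive step is absent.
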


It should be noted that the asymptotic behavior of solutions changes somewhat when $\omega = 0$. See Lemma \ref{Lem:HypExpDecay.Zero} in the appendix.

An easy variational argument gives existence of the positive solutions to \eqref{Eq:Osys} on finite intervals. The sliding method can be adapted to this case yielding:

\begin{lemma}\label{Lem:EUMODE-FR}
Suppose that $0 < \omega < \frac{1}{2}\alpha$. For every $R \in (0,\infty)$, there exists a unique positive solution to \eqref{Eq:Osys} in $(-R,R)$ satisfying
\begin{equation}
\text{$(\vone(-R), \vtwo(-R)) = (a,b)$  and $(\vone(R), \vtwo(R)) = (b,a)$.}
	\label{Eq:OBC}
\end{equation}
Furthermore, $\vone' > 0$ and $\vtwo' < 0$ in $(-R,R)$.
\end{lemma}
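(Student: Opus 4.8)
The plan is to establish existence by a direct variational argument and uniqueness via the sliding method, exploiting the a priori bounds of Proposition \ref{propbounds} which remain valid on the interval $(-R,R)$ under the given boundary data. First I would fix $R$ and consider the functional
\[
E_R[\vone,\vtwo] = \int_{-R}^R \Big[ \tfrac12 (\vone')^2 + \tfrac12 (\vtwo')^2 + \tfrac14 (1 - \vone^2 - \vtwo^2)^2 + \tfrac{\alpha}{4}\big(\vone\vtwo - \tfrac{\omega}{\alpha}\big)^2 \Big]\,dx
\]
(or the analogous Gross--Pitaevskii energy whose Euler--Lagrange system is \eqref{Eq:Osys}) over the affine space of $H^1((-R,R))$ pairs satisfying \eqref{Eq:OBC}. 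Since the integrand is bounded below and coercive, the direct method yields a minimizer. Replacing $(\vone,\vtwo)$ by $(\max(\vone,0),\max(\vtwo,0))$ or by a truncation consistent with \eqref{keybound} does not increase the energy, so the minimizer is nonnegative and satisfies $\vone^2 + \vtwo^2 \le 1$, $\vone\vtwo \ge \omega/\alpha$; standard elliptic regularity and the strong maximum principle (as in Proposition \ref{propbounds}) then upgrade this to a smooth solution with $a < \vone,\vtwo < b$ in $(-R,R)$, since the boundary values are exactly $(a,b)$ and $(b,a)$.

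Next I would prove strict monotonicity $\vone' > 0$, $\vtwo' < 0$ for \emph{any} positive solution satisfying \eqref{Eq:OBC}. Here the key point is that the bounds \eqref{keybound} give the system the structure \eqref{newsystem}, which allows a one-dimensional moving plane / reflection argument on the interval: set $\vone_\lambda(x) = \vone(2\lambda - x)$, $\vtwo_\lambda(x) = \vtwo(2\lambda - x)$ and slide $\lambda$ from $R$ downward, comparing $(\vone,\vtwo)$ with $(\vone_\lambda,\vtwo_\lambda)$ on $(\lambda, R)$. The sign of $\alpha uv - \omega \ge 0$ and the cooperative/competitive structure after the substitution make the relevant right-hand sides have a good sign, so the strong maximum principle and Hopf lemma propagate the inequalities all the way, exactly as in the proof of Proposition \ref{propmonotxN} but in the simpler ODE setting where no cut-off functions or Lemma \ref{lemLR} are needed; monotonicity then forces $\vone$ and $\vtwo$ to be one-to-one, which is what the sliding method will need.

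Finally, for uniqueness I would take two positive solutions $(\vone,\vtwo)$ and $(\bar\vone,\bar\vtwo)$ of \eqref{Eq:Osys}--\eqref{Eq:OBC}, and—using the strict monotonicity just established—slide one against the other: consider $(\vone^t, \vtwo^t)(x) := (\vone(x-t), \vtwo(x-t))$ for $t \in [0,2R]$ restricted appropriately, or more directly compare $(\bar\vone,\bar\vtwo)$ on $(-R,R)$ with translates of $(\vone,\vtwo)$. For $t$ near $2R$ the translate lies strictly above/below $(\bar\vone,\bar\vtwo)$ at the common part of the domain because of the monotone boundary behavior, and one decreases $t$ to the critical value; at the critical $t$ the difference solves a linear cooperative system (after the \eqref{newsystem} structure is used to fix signs), vanishes somewhere interior or touches at the boundary, and the strong maximum principle plus Hopf force the two solutions to coincide after translation. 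Since both are defined on the \emph{same} interval $(-R,R)$ with the \emph{same} boundary data, the only admissible translation is $t$ such that the domains match, forcing $(\bar\vone,\bar\vtwo) \equiv (\vone,\vtwo)$. The main obstacle I anticipate is organizing the sliding comparison so that the linearized system to which the maximum principle is applied genuinely has the cooperative sign structure: this requires invoking $\alpha uv \ge \omega$ from \eqref{keybound} at every step and handling the coupling term $2(\alpha+1)uv - \omega$ carefully, precisely the point where the bounds of Proposition \ref{propbounds} are indispensable.
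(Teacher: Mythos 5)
Your overall strategy -- variational existence plus the sliding method for uniqueness, with the a priori bounds of Proposition \ref{propbounds} supplying the sign $\alpha uv-\omega\ge 0$ that makes the comparison system cooperative -- is exactly the paper's, and the uniqueness step is essentially the paper's argument (the paper extends $(\vone,\vtwo)$ by the constants $(a,b)$ and $(b,a)$ outside $[-R,R]$, slides translates $\vone(\cdot+s)$ down from $s=2R$, and shows the critical shift is $T=0$ by the strong maximum principle and Hopf lemma; taking $(\bar\vone,\bar\vtwo)=(\vone,\vtwo)$ in that same argument is also what delivers the strict monotonicity). Two details of your plan need repair. First, the bounds $\vone^2+\vtwo^2\le 1$ and $\vone\vtwo\ge\omega/\alpha$ cannot be imposed on the minimizer by a truncation: these are nonlinear constraints on the pair, and there is no pointwise projection onto the set $\{u^2+v^2\le 1,\ uv\ge\omega/\alpha\}$ that is guaranteed not to increase each term of the energy. (Replacing $(\vone,\vtwo)$ by $(|\vone|,|\vtwo|)$ does give nonnegativity.) The paper instead proves these bounds for \emph{every} positive solution on $(-R,R)$ with the boundary data \eqref{Eq:OBC} (its Lemma \ref{Lem:13II19-L1}) by evaluating $A''=(\vone^2+\vtwo^2)''$ and $B''=(-\ln\vone\vtwo)''$ at interior maxima and running the same bootstrap as in Steps 1--4 of Proposition \ref{propbounds}; this is the route you should take, and it also covers the solutions appearing in the uniqueness step, which need not be minimizers. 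Second, your reflection-based moving plane for monotonicity does not close on a bounded interval: for $\lambda<0$ the reflected set $(2\lambda-R,\lambda)$ leaves $(-R,R)$, so sliding the reflection point down from $R$ only yields $\vone'\ge 0$ on $[0,R)$; you would have to repeat the argument from the left endpoint (or, more cleanly, drop the reflections altogether and read off monotonicity from the translation sliding, as the paper does). Finally, the cross term in your energy should carry the coefficient $\tfrac{\alpha}{2}$, not $\tfrac{\alpha}{4}$, for its Euler--Lagrange system to be \eqref{Eq:Osys}.
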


\subsection{Proof of Proposition \ref{Thm:EUMODE}}

Let us assume Lemmas \ref{Lem:HypExpDecay.Pos} and \ref{Lem:EUMODE-FR} for the moment and proceed with the proof of Proposition \ref{Thm:EUMODE}. Lemma \ref{Lem:EUMODE-FR} will be proved in the next subsection. Lemma \ref{Lem:HypExpDecay.Pos} follows from a routine asymptotic analysis near a hyperbolic critical point for ODEs. Its proof is postponed to Appendix \ref{App:Asym}.

\begin{proof}
1. We prove the existence of a solution to \eqref{Eq:Osys}-\eqref{Eq:26VI19-OBC} by sending $R \rightarrow \infty$ in Lemma \ref{Lem:EUMODE-FR}, where some care is needed to show that the solutions on finite intervals do not flatten to the constant solutions $(a,a)$ or $(b,b)$.

For $n =1, 2, \ldots $, let $(\vone_n, \vtwo_n)$ be the positive solution to \eqref{Eq:Osys} obtained in Lemma \ref{Lem:EUMODE-FR} with $R = n$. Fix $x_n \in (-n,n)$ such that $u_n(x_n) = \frac{1}{2}(a + b)$. Define $l_n = -n - x_n$, $r_n = n - x_n$, and
\[
(\tilde \vone_n(x), \tilde \vtwo_n(x)) = (\vone_n(x + x_n),\vtwo_n(x + x_n)) \text{ for } x \in [l_n,r_n].
\]

By Lemma \ref{Lem:EUMODE-FR}, $a \leq u_n, v_n \leq b$. Using elliptic estimates on unit closed subintervals of $[l_n,r_n]$, we have

\begin{equation}
|\tilde u_n^{(k)}| + |\tilde v_n^{(k)}|  \leq C \quad {\text {in}} \quad [l_n,r_n] \quad {\text {for}} \quad k=0,1,2,3,
\label{Eq:k-estimate}
\end{equation}
where $C$ is a positive constant independent of $n$ and $k$. Then, passing to a subsequence if necessary, we may assume that $l_n \rightarrow l_* \in [-\infty,0]$, $r_n \rightarrow r_* \in [0,\infty]$ (where $l_*$ and $r_*$ cannot be simultaneously finite), $(\tilde \vone_n, \tilde \vtwo_n)$ converges in $\mathcal{C}^2_{\rm loc}(l_*,r_*)$ to some $(\vone_*,\vtwo_*)$ satisfying \eqref{Eq:Osys}, $\vone_*' \geq 0$, $\vtwo_*' \leq 0$  in $(l_*, r_*)$.

Note that for each $n$, the Hamiltonian
\[
h_n := \frac{1}{2} (|\tilde\vone_n'|^2 + |\tilde\vtwo_n'|^2) - \frac{1}{4} (1 - \tilde\vone_n^2 - \tilde\vtwo_n^2 )^2 - \frac{\alpha}{2} (\tilde\vone_n\tilde\vtwo_n - \frac{\omega}{\alpha})^2
\]
is constant in $[l_n, r_n]$. As $(\tilde \vone(R), \tilde\vtwo(R)) = (b,a)$, it follows that $h_n \geq 0$ and so
\[
h_* := \frac{1}{2} (|\vone_*'|^2 + |\vtwo_*'|^2) - \frac{1}{4} (1 - \vone_*^2 - \vtwo_*^2 )^2 - \frac{\alpha}{2} (\vone_* \vtwo_* - \frac{\omega}{\alpha})^2 \geq 0.
\]

As said above, one has that $l_* = -\infty$ or $r_* = \infty$ (or both). We will only treat the case that $r_* = \infty$; the other case can be dealt with similarly. In this case we have $u_*(x) \geq \frac{1}{2}(a + b)$ for every $ x > 0$, since $ x > 0 \geq l_*$ implies $ \tilde u_n(x) \geq \tilde u_n(0) = \frac{a+b}{2} $ for large $n$. Then, by the monotonicity of $\vone_*$ and $\vtwo_*$, as $x \rightarrow \infty$, $(\vone_*(x),\vtwo_*(x))$ has a limit, say $(\tilde b, \tilde a)$, which satisfies $\frac{1}{2}(a + b) \leq \tilde b \leq b$ and $a \leq \tilde a \leq b$. By \eqref{Eq:Osys}, $(\vone_*''(x),\vtwo_*''(x))$ tends to $(\RHSA(\tilde b,\tilde a), \RHSB(\tilde b, \tilde a))$ as $x \rightarrow \infty$. Applying the mean value theorem to $\vone|_{[n, n+1/2]}$, we can find $\xi_n \in (n, n+1/2)$ such that $\vone_*'(\xi_n) \rightarrow 0$. Likewise, there exist $\eta_n \in (\xi_n, \xi_{n+1})$ such that $\vone_*''(\eta_n) \rightarrow 0$. It follows that $\RHSA(\tilde b,\tilde a) = 0$. This then implies that $\sup_{[\xi_n-2,\xi_n + 2]} u_*'  \rightarrow 0$, and so $u_*' (x) \rightarrow 0$ as $x \rightarrow \infty$. Similarly, $\RHSB(\tilde b,\tilde a) = 0$ and $v_*' (x) \rightarrow 0$ as $x \rightarrow \infty$.

Now, note that the equation $\RHSA(x,y) = \RHSB(x,y) = 0$ has three solutions in the positive quadrant, namely $(a,b)$, $(b,a)$ and $(c,c)$ where $c = \sqrt{\frac{1 + \omega}{2+\alpha}} \in (a,b)$. Also, as $(u_*'(x),v_*'(x)) \rightarrow 0$ as $x \rightarrow \infty$,
\[
0 \leq h_* = - \frac{1}{4} (1 - \tilde b ^2 - \tilde a^2 )^2 - \frac{\alpha}{2} (\tilde b \tilde a - \frac{\omega}{\alpha})^2
\]
and so $h_* = 0$, $\tilde b ^2 + \tilde a^2 = 1$ and $\tilde b \tilde a = \frac{\omega}{\alpha}$. As $\frac{1}{2}(a + b) \leq \tilde b \leq b$, we thus have $(\tilde b, \tilde a) = (b,a)$, i.e. $(u_*(x),v_*(x)) \rightarrow (b,a)$ as $x \rightarrow \infty$.

Now if $l_*$ is finite, \eqref{Eq:k-estimate} yields
\begin{equation}
|\tilde u_*^{(k)}| + |\tilde v_*^{(k)}|  \leq C \quad {\text {in}} \quad (l_*,r_*) \quad {\text {for}} \quad k=0,1,2,
\label{Eq:k-star-estimate}
\end{equation}
where $C$ is a positive constant independent of $k$, and so $u_*$ extends to a $\mathcal{C}^1$ function in $[l_*, r_*)$.
Now, since for every $x \in (l_*, r_*)$, we have $x \in [l_n, r_n]$ for large $n$, from \eqref{Eq:k-estimate} we also get that
\begin{align*}
|u_*(x) - a|
	&= \lim_{n \rightarrow \infty} |\tilde u_n(x) - a|  =  \lim_{n \rightarrow \infty} |\tilde u_n(x) - \tilde u_n(l_n)| \\
	&\le \limsup_{n \rightarrow \infty} C|x - l_n| = C|x - l_*|
\end{align*}
which leads to $u_*(l_*) =a$. A similar argument gives $v_*(l_*) =b$.
Now, by the strong maximum principle and the Hopf lemma (see the argument in Step 5 of the proof of Proposition \ref{propbounds}), we have $\vone_*'(x) > 0$ and $\vtwo_*'(x) < 0$ for every $x \in [l_*, r_*)$, and using those properties with $x= l_*$ we get $h_* > 0$, which contradicts the previous conclusion that $h_* = 0$. Hence $l_* = -\infty$. As above, this implies that $(u_*(x),v_*(x)) \rightarrow (a,b)$ as $x \rightarrow -\infty$.

We have thus shown that $(\vone_*, \vtwo_*)$  is a positive and strictly monotone solution to \eqref{Eq:Osys}-\eqref{Eq:26VI19-OBC}, as desired.

\medskip
\noindent 2. We use the sliding method to show that positive solutions to \eqref{Eq:Osys}-\eqref{Eq:26VI19-OBC} are translations of one another.

Let $(\vone, \vtwo)$ and $(\bar \vone, \bar \vtwo)$ be two positive solutions to \eqref{Eq:Osys}-\eqref{Eq:26VI19-OBC}. By Lemma \ref{Lem:HypExpDecay.Pos}, the limits
\begin{align*}
\ell_1^+
	&:= \lim_{x \rightarrow \infty} (b - \vone(x)) e^{-\lambda_- x},
	\qquad \bar \ell_1^+
		:= \lim_{x \rightarrow \infty} (b - \bar \vone(x)) e^{-\lambda_- x},\\
\ell_2^+
	&:= \lim_{x \rightarrow \infty} ( \vtwo(x) - a) e^{-\lambda_- x},
	\qquad \bar \ell_2^+
	:= \lim_{x \rightarrow \infty} (\bar \vtwo - a) e^{-\lambda_- x}, \\
\ell_1^-
	&:= \lim_{x \rightarrow -\infty} (\vone(x) - a) e^{\lambda_- x},
	\qquad \bar \ell_1^-
	:= \lim_{x \rightarrow -\infty} (\bar \vone(x) - a) e^{\lambda_- x},\\
\ell_2^-
	&:= \lim_{x \rightarrow -\infty} (b - \vtwo(x) ) e^{\lambda_- x},
	\qquad \bar \ell_2^-
	:= \lim_{x \rightarrow -\infty} (b - \bar \vtwo) e^{\lambda_- x}
\end{align*}
exist and are positive. Thus, in view of \eqref{boundab}, there is some large $T_0$ such that
\[
\vone(x - T_0) \leq \bar \vone(x) \leq \vone(x + T_0) \text{ and } \vtwo(x - T_0) \geq  \bar \vtwo(x) \geq \vtwo(x + T_0) \text{ for all } x \in \RR.
\]
Let
\[
T = \inf\{t \in [-T_0,T_0]: \bar \vone(x) \leq \vone(x + s) \text{ and } \bar \vtwo(x) \geq \vtwo(x + s) \text{ for all } t \leq s \leq T_0, x \in \RR\}.
\]

Set $\tilde \vone(x) = \vone(x + T)$ and $\tilde \vtwo(x) = \vone(x + T)$. Then $\tilde \vone \geq \bar \vone$ and $\tilde \vtwo \leq \bar \vtwo$. The result will follow once we show that $\tilde \vone \equiv \bar \vone$ and $\tilde \vtwo \equiv \bar \vtwo$. Assume by contradiction that this does not hold.

\medskip
\noindent a. We show that
\begin{equation}
\tilde \vone > \bar \vone \text{ and } \tilde \vtwo < \bar \vtwo.
	\label{Eq:09II19-Q1}
\end{equation}

Note that $\tilde \vone \geq \bar \vone$, $\tilde \vtwo \leq \bar \vtwo$, and  $(\tilde \vone, \tilde \vtwo)$ is also a solution to \eqref{Eq:Osys} satisfying \eqref{Eq:OBC}. Also, we have
\begin{eqnarray*}
\RHSA(\tilde \vone, \tilde \vtwo) - \RHSA(\tilde \vone, \bar \vtwo)
	&=& (\tilde \vtwo - \bar \vtwo) \big[(1 + \alpha) \tilde \vone (\tilde \vtwo + \bar \vtwo) - \omega\big]\\
	&\leq& (\tilde \vtwo - \bar \vtwo) \big[(1 + \alpha) \tilde \vone \tilde \vtwo   - \omega\big]\\
	&\stackrel{\eqref{keybound}}{\leq}& (\tilde \vtwo - \bar \vtwo)  \big[(1 + \alpha) \frac{\omega}{\alpha}  - \omega\big]\\
	&\leq& 0,
\end{eqnarray*}
and
\begin{eqnarray*}
\RHSB(\tilde \vone, \tilde \vtwo) - \RHSB(\bar \vone, \tilde \vtwo)
	&=& (\tilde \vone - \bar \vone) \big[(1 + \alpha) \tilde \vtwo (\tilde \vone + \bar \vone) - \omega\big]\\
	&\geq&  (\tilde \vone - \bar \vone)  \big[(1 + \alpha) \tilde \vtwo \tilde \vone - \omega\big]\\
	&\stackrel{\eqref{keybound}}{\geq}&  (\tilde \vone - \bar \vone)  \big[(1 + \alpha) \frac{\omega}{\alpha}  - \omega\big]\\
	&\geq& 0.
\end{eqnarray*}
So we have
\begin{align*}
\tilde \vone''
	&\leq \RHSA(\tilde \vone, \bar \vtwo), \qquad \bar \vone'' = \RHSA (\bar \vone, \bar \vtwo)\\
\tilde \vtwo
	&\geq \RHSB(\bar \vone, \tilde \vtwo), \qquad \bar \vtwo'' = \RHSB(\bar \vone, \bar \vtwo).
\end{align*}
Assertion \eqref{Eq:09II19-Q1} thus follows from the strong maximum principle.

\medskip
\noindent b. We proceed to deduce a contradiction.

 { Define
\begin{align*}
\tilde\ell_1^+
	&:= \lim_{x \rightarrow \infty} (b - \tilde\vone(x)) e^{-\lambda_- x},
\qquad
	\tilde\ell_2^+
	:= \lim_{x \rightarrow \infty} ( \tilde\vtwo(x) - a) e^{-\lambda_- x},\\
\tilde\ell_1^-
	&:= \lim_{x \rightarrow -\infty} (\tilde\vone(x) - a) e^{\lambda_- x},
	\qquad
\tilde\ell_2^-
	:= \lim_{x \rightarrow -\infty} (b - \tilde\vtwo(x) ) e^{\lambda_- x}.
\end{align*}
Recall that, by Lemma \ref{Lem:HypExpDecay.Pos}, $\ell_2^+ = \mu\ell_1^+$, $\ell_1^- = \mu \ell_2^-$ and similar relations hold for the counterparts with bar and tilde on top. By the minimality of $T$ and \eqref{Eq:09II19-Q1}, we have that
\begin{equation}
\bar \ell_2^+ = \tilde \ell_2^+,
	\label{Eq:09II19-Q11+}
\end{equation}
or
\begin{equation}
\bar \ell_2^- = \tilde \ell_2^-,
	\label{Eq:09II19-Q11-}
\end{equation}
or both. Because of the unique correspondence of linearized solutions and nonlinear solutions near a hyperbolic critical point of ODEs \cite[Chapter 13, Theorem 4.5]{CoddingtonLevinson} (with $p = 1$) -- see the argument leading to \eqref{Eq:27VI19-A1} -- we then deduce that $(\bar \vone, \bar \vtwo) \equiv (\tilde \vone,\tilde \vtwo)$, which contradicts \eqref{Eq:09II19-Q1}. We conclude the proof.}
\end{proof}

\subsection{Proof of Lemma \ref{Lem:EUMODE-FR}}

The existence in Lemma \ref{Lem:EUMODE-FR} follows from an easy variational argument. As far as we are concerned with the application of Lemma \ref{Lem:EUMODE-FR} to the proof of Proposition \ref{Thm:EUMODE}, it is enough to show that $u$ and $v$ are monotone. This can be done as in Subsection \ref{mono}. Here we provide an alternative proof which yields also uniqueness, which echoes the argument in Step 2 of the proof of Proposition \ref{Thm:EUMODE}.

We start with an adaptation of Proposition \ref{propbounds} for finite domains.

\begin{lemma}\label{Lem:13II19-L1}
Suppose that $0 < \omega < \frac{1}{2}\alpha$ and $R \in (0,\infty)$, and let $(\vone, \vtwo)$ be a positive solution of \eqref{Eq:Osys} in $(-R,R)$ satisfying \eqref{Eq:OBC}. Then \eqref{keybound} and \eqref{boundab} hold in  $[-R,R]$.
\end{lemma}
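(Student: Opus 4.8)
The plan is to adapt the argument of Proposition \ref{propbounds}, replacing the use of Kato's inequality on all of $\RR^N$ (which relied on the Liouville-type result \cite[Lemma 2]{Brezis}) by a direct maximum principle argument on the bounded interval $[-R,R]$, where the boundary values given by \eqref{Eq:OBC} are exactly the constants $(a,b)$ and $(b,a)$ satisfying \eqref{eqab}. First I would record, as in Steps 1 and 2 of the proof of Proposition \ref{propbounds}, the differential inequalities for $A = \vone^2 + \vtwo^2$ and $B = -\ln(\vone\vtwo)$ on $(-R,R)$. Since at $x = \pm R$ one has $A = a^2 + b^2 = 1$ and $\vone\vtwo = ab = \frac{\omega}{\alpha}$, i.e. $B = -\ln\frac{\omega}{\alpha}$, the quantities $(A - 1)_+$ and $(B + \ln\frac{\omega}{\alpha})_+$ vanish on the boundary. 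The computation $\Delta A \geq 2A^2 - 2A + 4\alpha e^{-2B} - 4\omega e^{-B}$ from the proof of Proposition \ref{propbounds} goes through verbatim in one dimension (with $\Delta$ replaced by $\tfrac{d^2}{dx^2}$), and likewise for $B$; note also that $\vone + \vtwo \geq \frac1p$ on $[-R,R]$ holds trivially here, either from continuity and positivity on the compact interval together with the boundary values, or by the same strong maximum principle argument as in Lemma \ref{Lem:14V19-LB}.

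The key structural point, exactly as in Proposition \ref{propbounds}, is the self-bootstrapping between the bound on $\sup A$ and the bound on $\sup B$ via the functions $h_1, h_2$ of \eqref{Eq:19h1Def}-\eqref{Eq:19h2Def}, and the fact that $t \leq h_1(h_2(t))$ has no solution with $t > 1$ because $\frac{d}{dt} h_1(h_2(t)) < 1$ there and $h_1(h_2(1)) = 1$. That analysis is purely about the ODEs for the extremal values and does not see the domain, so I would reuse it wholesale. The only adaptation is at the two places where \cite[Lemma 2]{Brezis} was invoked to conclude $(A - \gamma)_+ \equiv 0$ from $\Delta(A-\gamma)_+ \geq c\,(A-\gamma)_+^{\,q}$: on $(-R,R)$ I instead argue that $(A - \gamma)_+$ is a nonnegative subsolution of a linear equation vanishing on $\partial(-R,R) = \{\pm R\}$, hence is $\leq 0$ by the maximum principle, so $\equiv 0$. (Here I take $\gamma$ to be the relevant threshold, which at the final stage equals $1$ for $A$ and $-\ln\frac{\omega}{\alpha}$ for $B$; since $1 = a^2 + b^2$ and $-\ln\frac{\omega}{\alpha} = -\ln(ab)$ are precisely the boundary values, $(A-\gamma)_+ = 0 = (B - \gamma')_+$ on the boundary as required.) This gives $A \leq 1$ and $B \geq -\ln\frac{\omega}{\alpha}$, which is \eqref{keybound} on $[-R,R]$.

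For \eqref{boundab} on $[-R,R]$, I would reuse Step 5 of the proof of Proposition \ref{propbounds} almost verbatim: from \eqref{keybound} we get $a \leq \vone, \vtwo \leq b$, and the sign computations showing $\RHSA(b, \vtwo) \geq 0$, $\RHSA(a, \vtwo) \leq 0$, $\RHSB(\vone, a) \leq 0$, $\RHSB(\vone, b) \geq 0$ are algebraic identities that do not depend on the domain. Hence the constant $b$ is a supersolution of the first equation lying above $\vone$ on the boundary (where $\vone = b$ at $x = R$ and $\vone = a < b$ at $x = -R$), and since $\vone \not\equiv b$ on $[-R,R]$ — otherwise \eqref{Eq:OBC} would force $\vone(-R) = b \neq a$ — the strong maximum principle gives $\vone < b$ in $(-R,R)$; the other three strict inequalities follow the same way. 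The main obstacle is essentially bookkeeping: making sure that at each bootstrap step the threshold being used is $\geq$ (resp. $\leq$) the common boundary value so that the positive part genuinely vanishes on $\partial(-R,R)$, which is what licenses replacing the Liouville argument by the bounded-domain maximum principle. Once that is checked, everything else is a transcription of the $\RR^N$ proof into one dimension on a finite interval.
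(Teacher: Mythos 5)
Your proposal is correct and follows essentially the same route as the paper: both reduce to the differential inequalities for $A=\vone^2+\vtwo^2$ and $B=-\ln(\vone\vtwo)$, note that the boundary values $A=1$, $B=-\ln\frac{\omega}{\alpha}$ make the relevant positive parts vanish at $\pm R$, and then reuse Steps 3--5 of the proof of Proposition \ref{propbounds} wholesale. The only (harmless) mechanical difference is that the paper derives the bounds \eqref{Eq:17IV19-X1}--\eqref{Eq:17IV19-X2} by evaluating the inequalities at an interior maximum point of $A$ (resp.\ $B$), whereas you retain Kato's inequality and replace the Liouville lemma of \cite{Brezis} by the observation that a nonnegative convex function vanishing at the endpoints is identically zero --- both yield the same estimates.
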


\begin{proof} The proof is similar to though easier than that of Proposition \ref{propbounds}, thanks to the boundary condition \eqref{Eq:OBC}. We will only give a sketch.

Let $A = \vone^2 + \vtwo^2$, $B = -\ln(\vone \vtwo)$ and define $m = \max A$ and $n = \max B$.\footnote{Note that in the notation of \eqref{Eq:26VI19-R1}, we have $m = m_*$ and $n = n_*$ thanks to \eqref{Eq:OBC}.} If $m$ is attained at the endpoints, we have $m \leq 1$. Otherwise, $m = A(x_0)$ for some $x_0 \in (-R,R)$. We then have
\begin{align*}
0
	&\geq A''(x_0)
		\geq 2\vone(x_0) \RHSA(\vone(x_0), \vtwo(x_0)) + 2\vtwo(x_0) \RHSB(\vone(x_0), \vtwo(x_0))\\
	&\geq 2A^2(x_0) - 2A(x_0)+ 4s_*  \text{ where } s_* := \min_{t \geq e^{-n}} (\alpha t^2 - \omega t).
\end{align*}
In either case, we obtain
\begin{equation}
m \leq \frac{1}{2} (1 + \sqrt{1 - 8s_*}).
	\label{Eq:17IV19-X1}
\end{equation}
Likewise, we have
\begin{equation}
n \leq - \ln \frac{\omega}{\alpha + 2 - \frac{2}{m}}.
	\label{Eq:17IV19-X2}
\end{equation}
We can now follows exactly the arguments in Steps 3 and 4 of the proof of Proposition \ref{propbounds} to reach the conclusion. We omit the details.
\end{proof}

\begin{proof}[Proof of Lemma \ref{Lem:EUMODE-FR}]
Note that \eqref{Eq:Osys} is the Euler-Lagrange equation for the functional
\[
I[\vone,\vtwo] = \int_{-R}^R \Big[\frac{1}{2} (|\vone'|^2 + |\vtwo'|^2) + \frac{1}{4} (1 - \vone^2 - \vtwo^2 )^2 + \frac{\alpha}{2} (\vone\vtwo - \frac{\omega}{\alpha})^2\Big]\,dx.
\]
The existence of a positive solution to \eqref{Eq:Osys} satisfying \eqref{Eq:OBC} follows from a simple variational argument.

The uniqueness follows from the sliding method as we have seen earlier. Suppose that $(\vone,\vtwo)$ and $(\bar\vone,\bar\vtwo)$ are positive solutions of  \eqref{Eq:Osys} in $(-R,R)$ satisfying \eqref{Eq:OBC}. Extend $(\vone,\vtwo)$ to the whole of $\RR$ by defining $(\vone,\vtwo) \equiv (a,b)$ on $(-\infty,-R)$ and  $(\vone,\vtwo) \equiv (b,a)$ on $(R,\infty)$. Let
\begin{multline*}
T = \inf\{t \in [0,2R]: \bar \vone(x) \leq \vone(x + s) \\
	\text{ and } \bar \vtwo(x) \geq \vtwo(x + s)
	 \text{ for all } x \in [-R,R], t \leq s \leq 2R\}.
\end{multline*}
$T$ is well-defined thanks to Lemma \ref{Lem:13II19-L1}. To conclude it suffices to show that $T = 0$.

Set $\tilde \vone(x) = \vone(x + T)$ and $\tilde \vtwo(x) = \vone(x + T)$. Note that $\tilde \vone \geq \bar \vone$, $\tilde \vtwo \leq \bar \vtwo$, $(\tilde \vone, \tilde \vtwo)$ is also a solution to \eqref{Eq:Osys} in the interval $(-R,R-T)$, and, in view of \eqref{keybound}, we have as before that
\begin{align*}
\tilde \vone''
	&\leq \RHSA(\tilde \vone, \bar \vtwo), \qquad \bar \vone'' = \RHSA (\bar \vone, \bar \vtwo)\\
\tilde \vtwo
	&\geq \RHSB(\bar \vone, \tilde \vtwo), \qquad \bar \vtwo'' = \RHSB(\bar \vone, \bar \vtwo).
\end{align*}
In particular, if $T$ was positive, it would follow from the strong maximum principle and the Hopf lemma that there would exist some small $\eps > 0$ such that
\[
\bar \vone(x) \leq \vone(x + s)
	\text{ and } \bar \vtwo(x) \geq \vtwo(x + s)
	 \text{ for all } x \in [-R,R], T - \eps \leq s \leq T,
\]
which would contradict the definition of $T$. We hence have $T = 0$, as desired.
\end{proof}



\appendix

\section{Appendix: proof of Lemma \ref{Lem:HypExpDecay.Pos}.}\label{App:Asym}

We now prove of the exponential decay of solutions $(\vone,\vtwo)$ to \eqref{Eq:Osys}-\eqref{Eq:26VI19-OBC} to constants. This was needed in the proof of Proposition \ref{Thm:EUMODE}. We perform a standard asymptotic analysis near a hyperbolic critical point of ODEs.

 We write $\vone = b - \hat \vone$ and $\vtwo = a + \hat \vtwo$. The system \eqref{Eq:Osys} becomes
\begin{align}
\hat \vone''
	&= - \RHSA(b - \hat \vone, a + \hat \vtwo) =: \hat \RHSA(\hat \vone, \hat \vtwo),
	\label{Eq:01II19-A1}\\
\hat \vtwo''
	&= \RHSB(b - \hat \vone, a + \hat \vtwo) =: \hat \RHSB(\hat \vone, \hat \vtwo).
	\label{Eq:01II19-A2}
\end{align}
The functions $\hat \RHSA$ and $\hat \RHSB$ are polynomials and a direct computation gives
\[
\frac{\partial(\hat \RHSA,\hat \RHSB)}{\partial (\hat \vone, \hat \vtwo)} (0,0)
	= \left[\begin{array}{cc}
	2b^2 + \alpha\,a^2 &  - \frac{\omega(2 + \alpha)}{\alpha}\\
	- \frac{\omega(2 + \alpha)}{\alpha} &2a^2 +  \alpha b^2
	\end{array} \right] =: \mathbf{A}.
\]
In particular, we have, for large $x$, that
\begin{align}
\hat \vone''
	&= (2b^2 + \alpha\,a^2) \hat \vone   - \frac{\omega(2 + \alpha)}{\alpha} \hat \vtwo + O(|\hat \vone|^2 + |\hat \vtwo|^2),
	\label{Eq:01II19-A1X}\\
\hat \vtwo''
	&= - \frac{\omega(2 + \alpha)}{\alpha} \hat \vone + (2a^2 +  \alpha b^2)\hat \vtwo + O(|\hat \vone|^2 + |\hat \vtwo|^2).
	\label{Eq:01II19-A2X}
\end{align}

The matrix $\mathbf{A}$ has two positive eigenvalues $\lambda_\pm^2$ (see \eqref{Eq:04II19-A1}). For $\omega > 0$, an $\mathbf{A}$-eigenbasis of $\RR^2$ can be chosen as $(1,\mu)$ and $(-\mu,1)$ (which correspond to the eigenvalues $\lambda_-^2$ and $\lambda_+^2$, respectively) where $\mu$ is defined in \eqref{Eq:mudef}. Note that
\[
\lim_{\omega \rightarrow 0} \mu = \left\{\begin{array}{ll}
	0 &\text{ if } \alpha > 2,\\
	\infty & \text{ if } \alpha < 2,\\
	 1	& \text{ if } \alpha = 2.
\end{array}\right.
\]
This signifies some difference in the asymptotic behavior of $(\hat \vone, \hat \vtwo)$ for $\omega = 0$ and for $\omega > 0$.

\begin{proof}[Proof of Lemma \ref{Lem:HypExpDecay.Pos}] 1. We prove \eqref{Eq:06II19-elEx} and the relation $\ell_2 = \mu \ell_1$.

As $\hat \vone(x), \hat \vtwo(x) \rightarrow 0$ as $x \rightarrow \infty$, we have from \eqref{Eq:01II19-A1}-\eqref{Eq:01II19-A2} that $\hat \vone''(x), \hat \vtwo''(x)  \rightarrow 0$ as $x \rightarrow \infty$. By interpolation, this implies $\hat \vone'(x), \hat \vtwo'(x) \rightarrow 0$ as $x \rightarrow \infty$. We now write $\hat v_3 = \hat \vone', \hat v_4 = \hat \vtwo'$, $\bhv = (\hat \vone, \hat \vtwo, \hat v_3, \hat v_4)$ and recast \eqref{Eq:01II19-A1}-\eqref{Eq:01II19-A2} as a first order system
\begin{equation}
\bhv' = \mathbf{M} \bhv + \mathbf{\hat f}(\bhv)
	\label{Eq:02II19-X2}
\end{equation}
where
\[
\mathbf{M} = \left[\begin{array}{cccc}
0 & 0 & 1 & 0\\
0 & 0 & 0 & 1\\
2b^2 + \alpha\,a^2 & - \frac{\omega(2 + \alpha)}{\alpha}  & 0 & 0\\
- \frac{\omega(2 + \alpha)}{\alpha}  & 2a^2 +  \alpha b^2 & 0 & 0
\end{array}\right],
\]
and $\mathbf{\hat f}$ is a polynomial satisfying $\mathbf{\hat f}(0) = 0$ and $D\mathbf{\hat f}(0) = 0$. Note that, as $0 < \frac{\omega}{\alpha} < \frac{1}{2}$, $\mathbf{M}$ has real and nonzero eigenvalues $\lambda_1 = -\lambda_+ < \lambda_2 = -\lambda_- < 0 < \lambda_3 = \lambda_- < \lambda_4 = \lambda_+$. Hence the origin is a hyperbolic critical point of \eqref{Eq:02II19-X2}. As $\bhv(x) \rightarrow 0$ as $x \rightarrow 0$, we thus have that, for all large $x$, $\bhv(x)$ belongs to the stable manifold of \eqref{Eq:02II19-X2} at the origin. By the Stable Manifold Theorem (see e.g. \cite[Chapter 13, Theorem 4.3]{CoddingtonLevinson}), we then have that $\bhv(x)$ converges exponentially to $0$ as $x \rightarrow \infty$ and the rate of convergence is $O(e^{-\lambda x})$ for any $0 < \lambda <  |\lambda_2| = \lambda_-$.

Set
\begin{align}
X
	&= \hat \vone + \mu \hat \vtwo,\label{Eq:06II19-X1}\\
Y
	&= -\mu \hat \vone + \hat \vtwo.\label{Eq:06II19-X2}
\end{align}
We have
\begin{align}
X''
	&= \RHSA(\hat \vone, \hat \vtwo) + \mu \RHSB(\hat \vone,\hat \vtwo)
	= \lambda_-^2 X + O(|\hat \vone|^2 + |\hat \vtwo|^2),\label{Eq:06II19-X3}\\
Y''
	&= -\mu \RHSA(\hat \vone, \hat \vtwo) +  \RHSB(\hat \vone,\hat \vtwo)
	= \lambda_+^2 Y + O(|\hat \vone|^2 + |\hat \vtwo|^2).\label{Eq:06II19-X4}
\end{align}
Applying \cite[Chapter 13, Theorem 4.5]{CoddingtonLevinson}, we can find a constant $k$ and some $\delta > 0$ such that
\begin{equation}
X = k e^{-\lambda_- x} + O(e^{-(\lambda_- + \delta) x})
\text{ and } Y = O(e^{-(\lambda_- + \delta) x}).
	\label{Eq:27VI19-A1}
\end{equation}
Assertion \eqref{Eq:06II19-elEx} and the relation $\ell_2 = \mu \ell_1$ are readily seen.

\medskip
\noindent
2. We next show that $\ell_1$ and $\ell_2$ are positive.
\medskip

Suppose  by contradiction the assertion does not hold. As $\ell_2 = \mu\ell_1$, one has $\ell_1 = \ell_2 = 0$. Returning to \eqref{Eq:27VI19-A1} we have that
\[
|X| + |Y| = O(e^{-(\lambda_- + \delta) x})
\]
which implies
\[
\limsup_{x \rightarrow \infty} \frac{\ln(|X| + |Y|)}{x} \leq -(\lambda_- + \delta).
\]
Appealing again to \cite[Chapter 13, Theorem 4.3]{CoddingtonLevinson}, we thus have
\[
\limsup_{x \rightarrow \infty} \frac{\ln(|X| + |Y|)}{x} \leq -\lambda_+.
\]
This leads to
\begin{equation}
\lim_{x \rightarrow \infty} (b - \vone(x))e^{\lambda x} = \lim_{x \rightarrow \infty} (\vtwo(x) - a)e^{\lambda x} = 0 \text{ for all } 0 < \lambda < \lambda_+.
	\label{Eq:06II19-FastRate}
\end{equation}
As as $b\sqrt{2} < \lambda_+$, this gives a contradiction to Lemma \ref{Lem:wv1As} below and so concludes the proof.
\end{proof}

\begin{lemma}\label{Lem:wv1As}
Suppose that $0 \le \omega < \frac{1}{2}\alpha$ and let $(\vone, \vtwo)$ be a positive solution of \eqref{Eq:Osys}-\eqref{Eq:26VI19-OBC}. There is some $C > 0$ such that
\begin{equation}
b - \vone(x) \geq \frac{1}{C} e^{- b\sqrt{2} x} \text{ for large } x.
	\label{Eq:03II19-T1}
\end{equation}
\end{lemma}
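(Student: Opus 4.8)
The goal is a lower bound $b - \vone(x) \geq \frac{1}{C}e^{-b\sqrt 2 x}$ for large $x$, i.e. the deviation of $\vone$ from $b$ does not decay faster than the rate $b\sqrt 2$. The natural strategy is to compare $b - \vone$ with an explicit subsolution of a linear equation. First I would set $\hat\vone = b - \vone > 0$ (recall $\vone < b$ by \eqref{boundab}) and derive a differential inequality for $\hat\vone$ from \eqref{Eq:Osys}. We have $\hat\vone'' = -\RHSA(\vone,\vtwo) = \vone(1 - \vone^2 - \vtwo^2) + \vtwo(\omega - \alpha\vone\vtwo)$. Using the bounds \eqref{keybound}, namely $\vone^2 + \vtwo^2 \le 1$ and $\alpha\vone\vtwo \ge \omega$, both terms on the right have a sign: $\vone(1 - \vone^2 - \vtwo^2) \ge 0$ and $\vtwo(\omega - \alpha\vone\vtwo) \le 0$. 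So I need an \emph{upper} bound on $\hat\vone''$. Writing $1 - \vone^2 - \vtwo^2 = (b^2 - \vone^2) + (a^2 - \vtwo^2) + (1 - a^2 - b^2) = (b - \vone)(b + \vone) - (\vtwo - a)(\vtwo + a)$ and $\omega - \alpha\vone\vtwo = \alpha ab - \alpha\vone\vtwo = \alpha[b(a - \vtwo) + \vtwo(b - \vone)]$, one gets after grouping a bound of the shape $\hat\vone'' \le (2b^2 + o(1))\hat\vone + (\text{nonpositive or lower-order}) \le (2b^2 + \epsilon)\hat\vone$ for $x$ large, since $\vone \to b$, $\vtwo \to a$ and $\hat\vone, (\vtwo - a) \to 0$; the cross term involving $(\vtwo - a)$ must be controlled, and here one uses that $\vtwo - a \ge 0$ with the sign of its coefficient, or simply absorbs it into the $o(1)\hat\vone$ using the already-established (from the first half of the proof of Lemma \ref{Lem:HypExpDecay.Pos}, which does not use this lemma) exponential equivalence of $\hat\vone$ and $\vtwo - a$ up to constants. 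This gives, for any $\epsilon > 0$ and $x \ge x_\epsilon$,
\[
\hat\vone'' \le (2b^2 + \epsilon)\,\hat\vone \quad\text{in } (x_\epsilon, \infty).
\]

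Next I would run a comparison argument. The function $\varphi(x) = e^{-\sqrt{2b^2 + \epsilon}\,x}$ solves $\varphi'' = (2b^2+\epsilon)\varphi$ exactly. Consider $w = \hat\vone - \eta\varphi$ for a small constant $\eta > 0$. On $(x_\epsilon,\infty)$ we have $w'' \le (2b^2+\epsilon)w$; at $x = x_\epsilon$ we can choose $\eta$ small enough that $w(x_\epsilon) = \hat\vone(x_\epsilon) - \eta\varphi(x_\epsilon) > 0$ (here $\hat\vone(x_\epsilon) > 0$ is used); and $w(x) \to 0$ as $x \to \infty$ since both $\hat\vone$ and $\varphi$ tend to $0$. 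If $w$ were negative somewhere it would attain a negative interior minimum at some $x_0$, where $w''(x_0) \ge 0$, contradicting $w''(x_0) \le (2b^2+\epsilon)w(x_0) < 0$. Hence $w \ge 0$, i.e.
\[
b - \vone(x) = \hat\vone(x) \ge \eta\, e^{-\sqrt{2b^2+\epsilon}\,x} \quad\text{for } x \ge x_\epsilon.
\]
Since $\epsilon > 0$ is arbitrary and $\sqrt{2b^2+\epsilon} \to b\sqrt 2$ as $\epsilon \to 0$, for the specific rate in \eqref{Eq:03II19-T1} I would fix $\epsilon$ once and for all and note $\sqrt{2b^2 + \epsilon} < b\sqrt 2 + (\text{anything})$ — but to get exactly $b\sqrt 2$ in the exponent (which is what is needed, since $b\sqrt 2 < \lambda_+$ strictly) it is cleanest to keep a small slack: choosing $\epsilon$ with $\sqrt{2b^2+\epsilon} \le b\sqrt2 + \delta_0$ for a $\delta_0$ small enough that $b\sqrt2 + \delta_0 < \lambda_+$ still suffices for the contradiction in Lemma \ref{Lem:HypExpDecay.Pos}; alternatively, absorbing the $\epsilon$ by a slightly sharper bookkeeping of the lower-order terms gives $b\sqrt 2$ exactly. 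I will present the version giving the clean exponent $b\sqrt2$, pushing the $\epsilon$ into the lower-order remainder as indicated above.

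The main obstacle is the sign bookkeeping in the upper bound for $\hat\vone''$: the term $\vtwo(\omega - \alpha\vone\vtwo)$ is nonpositive, which is \emph{good} (it only helps the upper bound), but the term $\vone(1 - \vone^2 - \vtwo^2)$ contains the contribution $-\vone(\vtwo - a)(\vtwo + a)$ whose sign I want to be nonpositive (it is, since $\vtwo \ge a$) — so in fact all the "extra" terms beyond $2b^2\hat\vone$ have the right (nonpositive) sign, except for the genuinely quadratic remainder $O(\hat\vone^2 + (\vtwo-a)^2)$ and the discrepancy between $\vone(b+\vone)$ and $2b^2$, both of which are $o(1)\hat\vone$ for large $x$ and hence harmless. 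So once one writes the identity carefully the estimate is essentially immediate; the only real care needed is to confirm $\hat\vone(x_\epsilon) > 0$ (true, as $\vone < b$ strictly by Proposition \ref{propbounds}) so that $\eta$ can be chosen positive, and to invoke $\hat\vone(x) \to 0$ which is part of the boundary condition \eqref{Eq:26VI19-OBC}.
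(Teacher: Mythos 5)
Your proof is correct, but it takes a genuinely different route from the paper's. The paper keeps the inequality nonlinear: from $\vone''=\RHSA(\vone,\vtwo)$, \eqref{keybound} and $\vtwo\ge a$ it deduces $\vone''\ge \vone(\vone^2-b^2)$, i.e.\ $\vone$ is a subsolution of the scalar Allen--Cahn ODE, and then compares $\vone$ from above with the explicit family $w_{1,c}=b\tanh(\tfrac{b}{\sqrt2}(x-x_0))+c$ by sliding in the vertical shift $c$ (using that $t\mapsto t(t^2-b^2)$ is increasing on $[b/\sqrt3,\infty)$ and the strong maximum principle); the exponential lower bound then comes from the known decay of $b-b\tanh$. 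You instead pass to $\hat\vone=b-\vone$ and \emph{linearize}: dropping the nonpositive term $\vtwo(\omega-\alpha\vone\vtwo)$ and writing $1-\vone^2-\vtwo^2=(b^2-\vone^2)-(\vtwo-a)(\vtwo+a)\le (b-\vone)(b+\vone)$ gives $\hat\vone''\le \vone(b+\vone)\hat\vone\le 2b^2\hat\vone$ exactly --- note that all your hedging with $\epsilon$ and $o(1)$ remainders is unnecessary, since every discarded term (including the discrepancy $\vone(b+\vone)\le 2b^2$) already has the favourable sign, so you get the clean exponent $b\sqrt2$ with no limiting argument. The comparison with $\eta e^{-b\sqrt2 x}$ via the negative-interior-minimum argument is then valid: the infimum of $w=\hat\vone-\eta e^{-b\sqrt2 x}$ on $[x_*,\infty)$ cannot escape to infinity because $w\to0$, cannot sit at $x_*$ where $w>0$ (using $\vone<b$ from \eqref{boundab}), and cannot be a negative interior minimum because $w''\le 2b^2w<0$ there. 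Your linear comparison is arguably simpler and requires no explicit nonlinear profile; the paper's tanh comparison yields the slightly stronger pointwise statement $\vone\le b\tanh(\tfrac{b}{\sqrt2}(x-x_0))$, but for the purpose of contradicting \eqref{Eq:06II19-FastRate} the two are interchangeable.
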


\begin{proof} We note from \eqref{Eq:Osys} and \eqref{keybound} that
\begin{equation}
\vone'' \stackrel{ \eqref{Eq:Osys}, \eqref{keybound}}{\geq} \vone(\vone^2 + \vtwo^2 - 1) \stackrel{\eqref{keybound}}{\geq} \vone(\vone^2 - b^2).
	\label{Eq:03II19-T2}
\end{equation}

Now, take some $R > 0$ such that $\vone(R) \geq \frac{b}{\sqrt{3}}$ in $(R,\infty)$. Select $x_0$ such that $b\tanh(\frac{b}{\sqrt{2}}(R - x_0)) = \vone(R)$. To prove \eqref{Eq:03II19-T1}, it suffices to show that
$$
\vone(x) \leq b\tanh(\frac{b}{\sqrt{2}}(x - x_0)) \text{ in }(R,\infty).
$$
To this end, we note that the function $w_{1,c} = b\tanh(\frac{b}{\sqrt{2}}(x - x_0)) + c$ satisfies for $c \geq 0$,
\begin{equation}
w_{1,c}'' = w_{1,0}(w_{1,0}^2 - b^2) \leq w_{1,c}(w_{1,c}^2 - b^2) \text{ in } (R,\infty),
	\label{Eq:03II19-T3}
\end{equation}
where we have used $w_{1,0}(x) \geq w_{1,0}(R) = \vone(R) \geq \frac{b}{\sqrt{3}}$.

Clearly there is some large $c > 0$ such that $w_{1,c} \geq \vone$ in $[R,\infty)$. Let
\[
\underline{c} = \inf\{ c \geq 0: w_{1,c} \geq \vone \text{ in }[R,\infty)\}.
\]
If $\underline{c} > 0$, then we have $w_{1,\underline{c}} \geq \vone$ in $[R,\infty)$, $w_{1,\underline{c}}(R) > \vone(R)$, $\lim_{x \rightarrow \infty} (w_{1,\underline{c}} - \vone) > 0$, and there is some $x_1 \in (R,\infty)$ such that $w_{1,\underline{c}}(x_1) = \vone(x_1)$, which gives a contradiction to the strong maximum principle, in view of \eqref{Eq:03II19-T2} and \eqref{Eq:03II19-T3}. We thus have that $\underline{c} = 0$, which implies that $w_{1,0} \geq \vone$ in $[R,\infty)$, which gives \eqref{Eq:03II19-T1}.
\end{proof}

The asymptotic behavior changes somewhat in the case $\omega = 0$, which we record here for comparison. (This is not used in the paper.)

\begin{lemma}\label{Lem:HypExpDecay.Zero}
Suppose $\alpha > 0$ and $\omega = 0$. Then, $a = 0$ and $b = 1$. Let $(\vone, \vtwo)$ be a positive solution of \eqref{Eq:Osys}-\eqref{Eq:26VI19-OBC}. Then the following statements hold.
\begin{enumerate}[(i)]
\item $\tilde\ell_2 := \lim_{x \rightarrow \infty} (\vtwo(x) - a)e^{\sqrt{\alpha} x}$ exists and is positive.
\item If $\alpha > \frac{1}{2}$, then $\tilde\ell_1^+ := \lim_{x \rightarrow \infty} (b- \vone(x))e^{\sqrt{2} x}$ exists and is positive.
\item If $\alpha < \frac{1}{2}$, then $\tilde\ell_1^- := \lim_{x \rightarrow \infty} (b- \vone(x))e^{2\sqrt{\alpha} x}$ exists and is equal to $\frac{(\alpha + 1) \tilde \ell_2^2}{2(1 - 2\alpha)}$.
\item If $\alpha = \frac{1}{2}$, then $\tilde\ell_1^* := \lim_{x \rightarrow \infty} (b- \vone(x)) \frac{e^{\sqrt{2} x}}{x}$ exists and is equal to $\frac{3 \tilde \ell_2^2}{4 \sqrt{2}}$.
\end{enumerate}
\end{lemma}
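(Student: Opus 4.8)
The plan is to follow the proof of Lemma~\ref{Lem:HypExpDecay.Pos}: perform a standard asymptotic analysis near the hyperbolic critical point $(b,a)=(1,0)$, while now tracking carefully the interplay between the two linearized decay modes and the quadratic nonlinearity. Setting $\hat\vone = 1-\vone$ and $\hat\vtwo = \vtwo$, the system \eqref{Eq:Osys} becomes
\[
\hat\vone'' = 2\hat\vone - (1+\alpha)\hat\vtwo^2 - 3\hat\vone^2 + O\big(|\hat\vone|^3+|\hat\vone|\,|\hat\vtwo|^2\big), \qquad \hat\vtwo'' = \alpha\hat\vtwo - 2(1+\alpha)\hat\vone\hat\vtwo + O\big(|\hat\vone|^2|\hat\vtwo|+|\hat\vtwo|^3\big),
\]
so the linearization at the origin is \emph{diagonal}, with eigenvalues $2$ and $\alpha$, and the associated first-order system is hyperbolic with stable eigenvalues $-\sqrt2$ (in the $\hat\vone$-direction) and $-\sqrt\alpha$ (in the $\hat\vtwo$-direction). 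As in Lemma~\ref{Lem:HypExpDecay.Pos}, the Stable Manifold Theorem \cite[Chapter~13, Theorem~4.3]{CoddingtonLevinson} gives that $(\hat\vone,\hat\vtwo,\hat\vone',\hat\vtwo')$ lies on the stable manifold and decays like $O(e^{-\lambda x})$ for some $\lambda>0$.

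For~(i), the nonlinear terms in the $\hat\vtwo$-equation are at least quadratic and so decay strictly faster than $e^{-\sqrt\alpha x}$; applying \cite[Chapter~13, Theorem~4.5]{CoddingtonLevinson} to (the first-order form of) the scalar $\hat\vtwo$-equation then yields $\hat\vtwo(x) = \tilde\ell_2\, e^{-\sqrt\alpha x} + O(e^{-(\sqrt\alpha+\delta)x})$ for some $\tilde\ell_2\ge 0$ and $\delta>0$, which gives the existence of the limit. For positivity I would argue as in Lemma~\ref{Lem:wv1As}: by \eqref{keybound}, $\vtwo'' = \vtwo\big((1+\alpha)\vone^2+\vtwo^2-1\big)\le \alpha\vtwo$, so a maximum-principle comparison of $\vtwo$ with $\tfrac1C e^{-\sqrt\alpha x}$ forces $\vtwo(x)\ge\tfrac1C e^{-\sqrt\alpha x}$ for large $x$, whence $\tilde\ell_2>0$.

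For~(ii)--(iv), I would insert $\hat\vtwo(x)^2 = \tilde\ell_2^2\, e^{-2\sqrt\alpha x}(1+o(1))$ into the $\hat\vone$-equation; after a bootstrapping argument controlling the remaining quadratic-or-higher terms, this reads $\hat\vone'' - 2\hat\vone = -(1+\alpha)\tilde\ell_2^2\, e^{-2\sqrt\alpha x} + (\text{lower order})$. Solving by variation of parameters against $e^{\pm\sqrt2 x}$ and discarding the growing homogeneous mode (as $\hat\vone\to0$), the leading term of $\hat\vone$ is the slower of the decaying homogeneous mode $\propto e^{-\sqrt2 x}$ and the particular solution driven by the $e^{-2\sqrt\alpha x}$ forcing, and which one wins is governed by the sign of $2\sqrt\alpha-\sqrt2$, i.e. of $\alpha-\tfrac12$. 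If $\alpha>\tfrac12$, the homogeneous mode dominates, $\hat\vone(x)=\tilde\ell_1^+\, e^{-\sqrt2 x}(1+o(1))$, and $\tilde\ell_1^+>0$ since Lemma~\ref{Lem:wv1As} (whose hypotheses allow $\omega=0$) gives $1-\vone\ge\tfrac1C e^{-\sqrt2 x}$. If $\alpha<\tfrac12$, the ansatz $K e^{-2\sqrt\alpha x}$ produces the dominant particular solution $\tfrac{(1+\alpha)\tilde\ell_2^2}{2(1-2\alpha)}\, e^{-2\sqrt\alpha x}$, giving $\tilde\ell_1^- = \tfrac{(1+\alpha)\tilde\ell_2^2}{2(1-2\alpha)}>0$. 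If $\alpha=\tfrac12$, then $2\sqrt\alpha=\sqrt2$ and the forcing is resonant, so the ansatz $K x\, e^{-\sqrt2 x}$ gives the dominant particular solution $\tfrac{3\tilde\ell_2^2}{4\sqrt2}\, x\, e^{-\sqrt2 x}$ and hence $\tilde\ell_1^* = \tfrac{3\tilde\ell_2^2}{4\sqrt2}>0$.

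The hard part will be the bootstrapping hidden in the last step. A priori the Stable Manifold Theorem only gives a crude exponential rate for $\hat\vone$, so one must iterate: first improve the decay of $\hat\vone$ using that at the linear level the $\hat\vone$-component of the stable subspace involves only the $-\sqrt2$ eigendirection, then re-feed the sharp rate $e^{-2\sqrt\alpha x}$ of $\hat\vtwo^2$, and, in the resonant case $\alpha=\tfrac12$, carry the polynomial factor through correctly. This is precisely the kind of bookkeeping handled by \cite[Chapter~13, Theorems~4.3 and~4.5]{CoddingtonLevinson}, applied first to the $\hat\vtwo$-equation and then to the $\hat\vone$-equation, just as in the proof of Lemma~\ref{Lem:HypExpDecay.Pos}; tracking the admissible error exponents in the three cases is the only genuinely new (and routine) work.
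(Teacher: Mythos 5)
Your proposal is correct and takes exactly the route the paper intends: the paper omits this proof with the remark that it is ``similar to that of Lemma~\ref{Lem:HypExpDecay.Pos}'', i.e.\ the same hyperbolic-critical-point analysis via \cite[Chapter 13, Theorems 4.3 and 4.5]{CoddingtonLevinson} (now with the diagonal linearization $\diag(2,\alpha)$ at $(1,0)$) combined with the comparison argument of Lemma~\ref{Lem:wv1As} for positivity, and your constants in (ii)--(iv) check out against variation of parameters. The one point to watch is that for $\alpha>2$ the a priori rate from the Stable Manifold Theorem is only $O(e^{-\lambda x})$ with $\lambda<\sqrt2$, so your claim that the quadratic terms in the $\hat\vtwo$-equation automatically decay faster than $e^{-\sqrt\alpha x}$ already requires the iterative improvement you defer to the end --- the bootstrap is needed in the $\hat\vtwo$-equation as well as in the $\hat\vone$-equation.
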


\begin{proof} The proof is similar to that of Lemma \ref{Lem:HypExpDecay.Pos} and is omitted.
\end{proof}

\bibliography{ANS}{}
\bibliographystyle{siam}
\end{document}